\newtheorem{remark}{Remark}[section]
\newtheorem{lemma}[remark]{Lemma}
\newtheorem{theorem}[remark]{Theorem}
\newtheorem{proposition}[remark]{Proposition}
\newtheorem{corollary}[remark]{Corollary}
\title{From Italian domination in lexicographic product graphs to w-domination in graphs}
\author{A. Cabrera Mart\'{\i}nez, A. Estrada-Moreno, J. A. Rodr\'{\i}guez-Vel\'{a}zquez\\
{\small Universitat Rovira i Virgili }\\{\small Departament d'Enginyeria Inform\`atica i Matem\`atiques } \\  {\small Av. Pa\"{\i}sos Catalans 26, 43007 Tarragona, Spain.} \\{\small
  abel.cabrera\@@urv.cat, alejandro.estrada\@@urv.cat, juanalberto.rodriguez\@@urv.cat}
}
\date{ }
\begin{document}
\maketitle

\begin{abstract}
In this paper, we show that the Italian domination number of every lexicographic product graph $G\circ H$ can be expressed in terms of five different domination parameters of $G$. These parameters can be defined under the following unified approach, which encompasses the definition of several well-known domination parameters and introduces new ones. 

Let $N(v)$ denote the open neighbourhood of $v\in V(G)$,  
 and let $w=(w_0,w_1, \dots,w_l)$ be a vector of nonnegative integers such that $ w_0\ge 1$. We say that a function $f: V(G)\longrightarrow \{0,1,\dots ,l\}$ is a  $w$-dominating function if  $f(N(v))=\sum_{u\in N(v)}f(u)\ge w_i$ for every vertex $v$ with $f(v)=i$. The weight of $f$ is defined to be $\omega(f)=\sum_{v\in V(G)} f(v)$.
 The  $w$-domination number of $G$, denoted by $\gamma_{w}(G)$, is the minimum weight among all $w$-dominating functions on $G$.  
  
Specifically,  we show that $\gamma_{I}(G\circ H)=\gamma_{w}(G)$, where $w\in \{2\}\times\{0,1,2\}^{l}$ and $l\in \{2,3\}$. The decision on whether  the equality holds for specific values of $w_0,\dots,w_l$
 will depend on the value of the domination number of $H$.
 This paper also provides preliminary results on $\gamma_{w}(G)$ and raises the challenge of conducting a detailed study of the topic. 
\end{abstract}

{\it Keywords}:
Italian domination, $w$-domination, $k$-domination, $k$-tuple domination, lexicographic product graph.

\section{Introduction}
Let $G$ be a graph, $l$ a positive integer, and $f: V(G)\longrightarrow \{0,\dots , l\}$ a function. For every $i\in \{0,\dots , l\}$, we define $V_i=\{v\in V(G):\; f(v)=i\}$. We will identify $f$ with the subsets $V_0,\dots,V_l$ associated with it, and so we will use the unified notation $f(V_0,\dots , V_l)$ for the function and these associated subsets.  The weight of $f$ is defined to be $$\omega(f)=f(V(G))=\sum_{i=1}^l i|V_i| .$$ 

An \emph{Italian dominating function} (IDF) on a graph $G$   is a
function $f(V_0,V_1,V_2)$ satisfying that $f(N(v))=\sum_{u\in N(v)}f(u)\ge 2$ for every $v\in V_0$, where $N(v)$ denotes the open neighbourhood of $v$.  Hence, $f(V_0,V_1,V_2)$ is an IDF if $N(v)\cap V_2\ne \varnothing$ or $|N(v)\cap V_1|\ge 2$ for every $v\in V_0$.  The \textit{Italian domination number}, denoted by  $\gamma_I(G)$, is the minimum weight among all  IDFs on $G$. This concept was introduced by Chella\-li et al.\ in \cite{CHELLALI201622} under the name of Roman $\{2\}$-domination. The term ``Italian domination" comes from a subsequent paper by Henning and Klostermeyer \cite{HENNING2017557}. 

In this paper we show that the Italian domination number of every lexicographic product graph $G\circ H$ can be expressed in terms of five different domination parameters of $G$. These parameters can be defined under the following unified approach. 

Let $w=(w_0, \dots,w_l)$ be a vector of nonnegative integers such that $ w_0\ge 1$. 
We say that $f(V_0,\dots , V_l)$ is a  $w$-\emph{dominating function} if  $f(N(v))\geq w_i$ for every $v\in V_i$. 
The  $w$-\emph{domination number} of $G$, denoted by $\gamma_{w}(G)$, is the minimum weight among all $w$-dominating functions on $G$. For simplicity, a  $w$-dominating function $f$  of weight $\omega(f)=\gamma_{w}(G)$ will be called a $\gamma_{w}(G)$-function. 

This unified approach allows us to encompass the definition of several well-known domination parameters and introduce new ones. For instance, we would highlight  the following particular cases of known domination parameters that we define here in terms of $w$-domination. 
 
\begin{itemize}
\item The \emph{domination number} of $G$ is defined to be $\gamma(G)=\gamma_{(1,0)}(G)=\gamma_{(1,0,0)}(G)$. Obviously, every $\gamma_{(1,0,0)}(G)$-function $f(V_0,V_1,V_2)$ satisfies that $V_2=\varnothing$ and $V_1$ is a dominating set of cardinality $|V_1|=\gamma(G)$, i.e., $V_1$ is a $\gamma(G)$-set. 

\item The \emph{total domination number} of a graph $G$ with no isolated vertex is defined to be $\gamma_t(G)=\gamma_{(1,1)}(G)=\gamma_{(1,1,w_2, \dots, w_l)}(G)$, for every  $w_2, \dots, w_l\in \{0,1\}$. Notice that there exists a $\gamma_{(1,1,w_2, \dots, w_l)}(G)$-function $f(V_0,V_1,\dots, V_l)$ such that $V_i=\varnothing$ for every $i\in \{2,\dots, l\}$ and $V_1$ is a total dominating set of cardinality $|V_1|=\gamma_t(G)$, i.e., $V_1$ is a $\gamma_t(G)$-set.

\item Given a positive integer $k$, the $k$-\emph{domination number} of a graph $G$  is defined to be
$\gamma_k(G)=\gamma_{(k,0)}(G)$.
In this case, $V_1$ is a $k$-dominating set of cardinality $|V_1|=\gamma_k(G)$, i.e., $V_1$ is a $\gamma_k(G)$-set.
The study of $k$-domination in graphs was initiated by 
Fink and Jacobson
\cite{MR812671} in 1984.

\item  Given a positive integer $k$, the $k$-\emph{tuple domination number} of a graph $G$ of minimum degree $\delta\ge k-1$ is defined to be
$\gamma_{\times k}(G)=\gamma_{(k,k-1)}(G)$.
In this case, $V_1$ is a $k$-tuple dominating set of cardinality $|V_1|=\gamma_{\times k}(G)$, i.e., $V_1$ is a $\gamma_{\times k}(G)$-set.
In particular, $\gamma_{\times 1}(G)=\gamma(G)$ and $\gamma_{\times 2}(G)$ is known as the  \emph{double domination number} of $G$.
This parameter was introduced by Harary and Haynes in \cite{Harary2000}. 

\item  Given a positive integer $k$, the $k$-\emph{tuple total domination number} of a graph $G$ of minimum degree $\delta\ge k$ is defined to be
$\gamma_{\times k,t}(G)=\gamma_{(k,k)}(G)$.
In particular, $\gamma_{\times 1,t}(G)=\gamma_t(G)$ and $\gamma_{\times 2,t}(G)$ is known as the  \emph{double total domination number},   and $V_1$ is a double total dominating set  of cardinality $|V_1|=\gamma_{\times 2,t}(G)$, i.e., $V_1$ is a  $\gamma_{\times 2,t}(G)$-set. 
The $k$-tuple total domination number was introduced by Henning  and Kazemi in \cite{Henning2010}.

\item The \emph{Italian domination number} of $G$ is defined to be $\gamma_{_I}(G)=\gamma_{(2,0,0)}(G)$. As mentioned earlier, this parameter was introduced by Chella\-li et al.\ in \cite{CHELLALI201622} under the name of Roman $\{2\}$-domination number. 
The concept was stu\-died further in \cite{HENNING2017557,Klostermeyer201920}.

\item The \emph{total Italian domination number} of a graph  $G$ with no isolated vertex is defined to be $\gamma_{tI}(G)=\gamma_{(2,1,1)}(G)$. 
This parameter was introduced by Cabrera et al.\ in \cite{TR2DF-2020}, and independently by  Abdollahzadeh~Ahangar et al.\ in \cite{CopionesTotalItalian},  under the name of total Roman $\{2\}$-domination number. The total Italian domination number of lexicographic product graphs was studied in \cite{DD-lexicographic}. 

\item  The \emph{$\{k\}$-domination number} of $G$ is defined to be $\gamma_{\{k\}}(G)=\gamma_{(k,k-1,\dots,1,0)}(G)$.
This parameter was introduced by Domke et al.\ in \cite{k-domination-first} and studied further  in \cite{k-domination-2006,k-domination-2009,k-domination-2008}.
\end{itemize}

Notice that the concept of  $Y$-dominating function introduced by Bange et.al.\  \cite{MR1415278} is quite different from the concept of $w$-dominating function introduced in this paper. Given a set $Y$ of real numbers, a function $ f : V(G) \longrightarrow Y$  is a $Y$-dominating function if $f(N[v])=f(v)+\sum_{u\in N(v)}f(u)\ge 1$
for every $v\in V(G)$. The $Y$-domination number, denoted by $\gamma_{_Y}(G)$,  is the minimum weight among all $Y$-dominating functions on $G$. 
Hence, if $Y=\{0,1,\dots, l\}$, then  $\gamma_{_Y}(G)=\gamma_{(1,0,\dots,0)}(G)=\gamma(G)$.

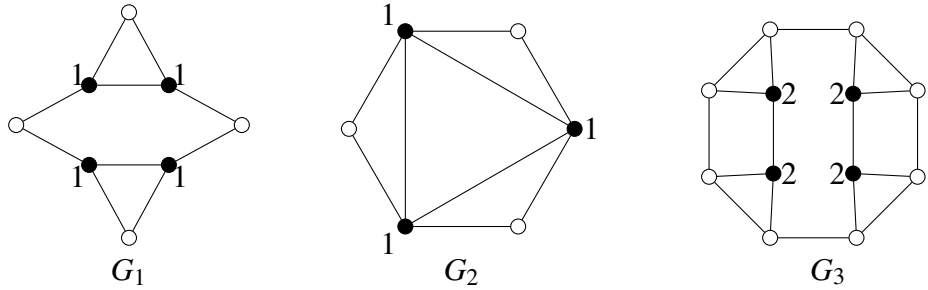
\begin{figure}[ht]
\centering
\begin{tikzpicture}[scale=1.5]
\node[draw, shape=circle, fill=white, scale=.5] at (0:1cm)(A){};
\node[draw, shape=circle, fill=black, scale=.5] at (45:0.5cm)(B){};
\node[draw, shape=circle, fill=white, scale=.5] at (90:1cm)(C){};
\node[draw, shape=circle, fill=black, scale=.5] at (135:0.5cm)(D){};
\node[draw, shape=circle, fill=white, scale=.5] at (180:1cm)(E){};
\node[draw, shape=circle, fill=black, scale=.5] at (225:0.5cm)(F){};
\node[draw, shape=circle, fill=white, scale=.5] at (270:1cm)(G){};
\node[draw, shape=circle, fill=black, scale=.5] at (315:0.5cm)(H){};

\draw (A)--(B)--(C)--(D)--(E)--(F)--(G)--(H)--(A);
\draw (F)--(H); \draw (D)--(B);

\node at ([shift={(.1,.1)}]B) {$1$};
\node at ([shift={(-.1,.1)}]D) {$1$};
\node at ([shift={(-.1,-.1)}]F) {$1$};
\node at ([shift={(.1,-.1)}]H) {$1$};

\node at ([shift={(0,-.3)}]G) {$G_1$};

\end{tikzpicture}
\hspace{1cm}
\begin{tikzpicture}[scale=1.5]
\node[draw, shape=circle, fill=black, scale=.5] at (0:1cm)(A){};
\node[draw, shape=circle, fill=white, scale=.5] at (60:1cm)(B){};
\node[draw, shape=circle, fill=black, scale=.5] at (120:1cm)(C){};
\node[draw, shape=circle, fill=white, scale=.5] at (180:1cm)(D){};
\node[draw, shape=circle, fill=black, scale=.5] at (240:1cm)(E){};
\node[draw, shape=circle, fill=white, scale=.5] at (300:1cm)(F){};

\draw (A)--(B)--(C)--(D)--(E)--(F)--(A)--(C)--(E)--(A);

\node at ([shift={(.15,0)}]A) {$1$};
\node at ([shift={(-.15,.15)}]C) {$1$};
\node at ([shift={(-.15,-.15)}]E) {$1$};
\node at ([shift={(-0.5,-.4)}]F) {$G_2$};
\end{tikzpicture}
\hspace{1cm}
\begin{tikzpicture}[scale=1.5]
\node[draw, shape=circle, fill=white, scale=.5] at (22.5:1cm)(A){};
\node[draw, shape=circle, fill=white, scale=.5] at (22.5+45:1cm)(B){};
\node[draw, shape=circle, fill=white, scale=.5] at (22.5+90:1cm)(C){};
\node[draw, shape=circle, fill=white, scale=.5] at (180-22.5:1cm)(D){};
\node[draw, shape=circle, fill=white, scale=.5] at (180+22.5:1cm)(E){};
\node[draw, shape=circle, fill=white, scale=.5] at (270-22.5:1cm)(F){};
\node[draw, shape=circle, fill=white, scale=.5] at (270+22.5:1cm)(G){};
\node[draw, shape=circle, fill=white, scale=.5] at (360-22.5:1cm)(H){};
\node[draw, shape=circle, fill=black, scale=.5] at (45:0.5cm)(I){};
\node[draw, shape=circle, fill=black, scale=.5] at (135:0.5)(K){};
\node[draw, shape=circle, fill=black, scale=.5] at (225:0.5cm)(L){};
\node[draw, shape=circle, fill=black, scale=.5] at (315:0.5)(J){};

\draw (A)--(B)--(C)--(D)--(E)--(F)--(G)--(H)--(A)--(I)--(B);
\draw (I)--(J)--(H); \draw (G)--(J);
\draw (C)--(K)--(L)--(F); \draw (L)--(E);
\draw (K)--(D);

\node at ([shift={(-.14,0)}]I) {$2$};
\node at ([shift={(.14,0)}]K) {$2$};
\node at ([shift={(.14,0)}]L) {$2$};
\node at ([shift={(-.14,0)}]J) {$2$};
\node at ([shift={(-0.25,-.3)}]G) {$G_3$};
\end{tikzpicture}
\caption{The labels of black-coloured vertices describe a $\gamma_{(2,1,0)}(G_1)$-function, a $\gamma_{(2,2,0)}(G_2)$-function and a $\gamma_{(2,2,2)}(G_3)$-function, respectively.}\label{ZZZ}
\end{figure}

For the graphs shown in Figure \ref{ZZZ} we have the following values. 
\begin{itemize}
\item $\gamma_{I}(G_1)=\gamma_{(2,1,0)}(G_1)=\gamma_{(2,2,0)}(G_1)=4<6=\gamma_{(2,2,1)}(G_1)=\gamma_{(2,2,2)}(G_1).$
\item $\gamma_{I}(G_2)=\gamma_{(2,1,0)}(G_2)=\gamma_{(2,2,0)}(G_2)=\gamma_{(2,2,1)}(G_2)=\gamma_{(2,2,2)}(G_2)=3.$
\item $\gamma_{I}(G_3)=\gamma_{(2,1,0)}(G_3)=6<8=\gamma_{(2,2,0)}(G_3)=\gamma_{(2,2,1)}(G_3)=\gamma_{(2,2,2)}(G_3).$
\end{itemize}

The remainder of the paper is organized as follows. In Section \ref{Sectionlexicographic}  we show that for any graph $G$ with no isolated vertex and any nontrivial graph $H$ with $\gamma(H)\ne 3$ or $\gamma_{_I}(H)\ne 3$, the Italian domination number of $G\circ H$ equals one of the following parameters: $\gamma_{(2,1,0)}(G)$, $\gamma_{(2,2,0)}(G)$, $\gamma_{(2,2,1)}(G)$ or $\gamma_{(2,2,2)}(G)$. The specific value  $\gamma_{_I}(G\circ H)$ takes depends on the value of $\gamma(H)$. For the cases where $\gamma_{_I}(H)= \gamma(H)=3$, we show that $  \gamma_{_I}(G\circ H)= \gamma_{(2,2,2,0)}(G)$. 
Section \ref{NewDomination} is devoted to providing some preliminary results on $w$-domination. 
We first describe some general properties of $\gamma_{w}(G)$ and then dedicate a subsection to each of the specific cases declared of interest in Section \ref{Sectionlexicographic}.

We assume that the reader is familiar with the basic concepts, notation  and terminology of domination in graph. If this is not the case, we suggest the textbooks \cite{Haynes1998a,Haynes1998,Henning2013}.  For the remainder of the paper, definitions will be introduced whenever a concept is needed.

\section{Italian domination in lexicographic product graphs}\label{Sectionlexicographic}

The \emph{lexicographic product} of two graphs $G$ and $H$ is the graph $G \circ H$ whose vertex set is  $V(G \circ H)=  V(G)  \times V(H )$ and $(u,v)(x,y) \in E(G \circ H)$ if and only if $ux \in E(G)$ or $u=x$ and $vy \in E(H)$.

Notice that  for any $u\in V(G)$  the subgraph of $G\circ H$ induced by $\{u\}\times V(H)$ is isomorphic to $H$. For simplicity, we will denote this subgraph by $H_u$. Moreover,
 the neighbourhood of $(x,y)\in V(G)\times V(H)$ will be denoted by $N(x,y)$ instead of $N((x,y))$. Analogously, for any function $f$ on $G\circ H$, the image of $(x,y)$   will be denoted by $f(x,y)$ instead of $f((x,y))$.

\begin{lemma}\label{lem-1} 
For any graph $G$  with no isolated vertex and any nontrivial graph $H$ with $\gamma_{_I}(H)\ne 3 $ or $\gamma(H)\ne 3$,  there exists a $\gamma_{_I}(G\circ H)$-function $f$ satisfying that $f(V(H_u))\leq 2$  for every $u\in V(G)$.
\end{lemma}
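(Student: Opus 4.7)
The plan is a minimality argument. Among all $\gamma_{_I}(G\circ H)$-functions, let $f$ be one that minimizes, first, the number of copies $H_u$ with $g(u):=f(V(H_u))\ge 3$ and, subject to this, the quantity $\sum_{u:\,g(u)\ge 3}g(u)$. Assume for contradiction that some $u_0\in V(G)$ satisfies $g(u_0)\ge 3$; the goal is to construct an IDF $f'$ of $G\circ H$ with $\omega(f')\le\omega(f)$ that strictly improves $f$ in this lexicographic order, contradicting the choice of $f$.

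The key structural fact is a trichotomy based on the external contribution $s_u:=\sum_{x\in N_G(u)}g(x)$: (i) if $s_u\ge 2$ then no internal IDF constraint is imposed on $f|_{H_u}$, so even $f|_{H_u}\equiv 0$ is feasible; (ii) if $s_u=1$, the support of $f|_{H_u}$ must be a dominating set of $H$, forcing $g(u)\ge\gamma(H)$; (iii) if $s_u=0$, then $f|_{H_u}$ itself must be an IDF of $H$, forcing $g(u)\ge\gamma_{_I}(H)$. The hypothesis $\gamma_{_I}(H)\ne 3$ or $\gamma(H)\ne 3$ is precisely what makes the rebalancing below succeed, since outside the excluded case either a dominating set or a minimum IDF of $H$ can be chosen with small enough weight to carry out the local exchange.

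From here I would split on $s_{u_0}$. If $s_{u_0}\ge 2$, I would trim $f|_{H_{u_0}}$ down to a configuration of weight at most $2$ using~(i). If $s_{u_0}\le 1$, I would exploit that $G$ has no isolated vertex to pick a neighbour $u'\in N_G(u_0)$ and pump $2-s_{u_0}$ units of weight into $H_{u'}$---placed on a $\gamma(H)$-set or on the support of a $\gamma_{_I}(H)$-function, as permitted by the hypothesis on $H$---so that $s_{u_0}^{\mathrm{new}}\ge 2$ and case~(i) applies inside $H_{u_0}$. A direct weight balance then yields $\omega(f')\le\omega(f)$, while either the number of copies with $g'\ge 3$ strictly drops or the total excess $\sum_{u:\,g'(u)\ge 3}g'(u)$ does, producing the contradiction.

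The main technical obstacle is the cascade effect: lowering $g(u_0)$ decreases $s_{u''}$ for every $u''\in N_G(u_0)$, and may destroy the IDF condition inside a copy $H_{u''}$ whose original configuration had no slack. One must verify copy by copy that either $s_{u''}$ remains at least $2$ after the modification, or that the interior of $H_{u''}$ still suffices, and that no copy acquires $g'\ge 3$ as a byproduct. This bookkeeping, carried out jointly over the values of $\gamma(H)$ and $\gamma_{_I}(H)$, is the delicate part of the argument; it is where the precise shape of the hypothesis $\gamma_{_I}(H)\ne 3$ or $\gamma(H)\ne 3$ is needed, and singles out $\gamma_{_I}(H)=\gamma(H)=3$ as the only configuration in which the scheme fails.
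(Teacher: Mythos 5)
Your overall strategy---an extremal choice among minimum-weight IDFs followed by a local exchange around a heavy copy $H_{u_0}$---is the same as the paper's, and your trichotomy on the external contribution $s_{u_0}$ is correct (it is implicit in the paper's case analysis). But there is a genuine gap at exactly the point you defer to ``bookkeeping'': in the case $s_{u_0}\le 1$ the exchange you describe does not balance. Take the tight subcase that the hypothesis still permits, namely $s_{u_0}=0$, $g(u_0)=3$, $\gamma_{_I}(H)=3$ and $\gamma(H)=2$ (e.g.\ $H=P_5$, and $G=K_2$ so that $u_0$ has a single neighbour $u'$ with $g(u')=0$). Your move pumps $2-s_{u_0}=2$ units into $H_{u'}$ and then invokes~(i) to empty $H_{u_0}$. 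If you actually empty $H_{u_0}$, the value-$0$ vertices of $H_{u'}$ now receive $0$ from $H_{u_0}$, so $f'|_{H_{u'}}$ would have to be an IDF of $H$ on its own, which requires weight $\gamma_{_I}(H)=3>2$: the cascade is not a verification issue but an actual failure. If instead you retain weight $2$ in $H_{u_0}$ to protect the neighbouring copies, the exchange removes $g(u_0)-2=1$ and adds $2$, so $\omega(f')=\omega(f)+1$ and the weight balance you assert is false. Neither variant of your scheme produces the required $f'$ in this subcase.

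The missing idea, which is precisely the paper's Case 2, is an asymmetric split tuned to $\gamma(H)=2$: keep total weight exactly $2$ in $H_{u_0}$, but place it as two $1$'s on a $\gamma(H)$-set $\{v_1,v_2\}$, so that every value-$0$ vertex of $H_{u_0}$ already receives $1$ internally and needs only \emph{one} unit from outside; then pump only one unit into $H_{u'}$, added on top of a maximum-valued vertex $(u',v')$ and capped via $\min\{2,f(u',v')+1\}$, zeroing the rest of $H_{u'}$ to pay for it. The cap-and-zero step is safe because any vertex outside $H_{u'}$ needs at most $2$ from that copy and $\min\{2,f(u',v')+1\}\ge\min\{2,f(V(H_{u'}))\}$, and the cascade you worry about becomes trivial, since every copy adjacent to $u_0$ sees total weight $2$ from $H_{u_0}$ alone. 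For the record, the rest of your plan does close: if $s_{u_0}\ge 2$, trimming $H_{u_0}$ to a single vertex of value $2$ strictly decreases the weight, contradicting minimality outright; if $g(u_0)\ge 4$, removing all of $H_{u_0}\cup H_{u'}$ and placing $2+2$ balances (this is the paper's Case 1); and if $s_{u_0}=1$ with $g(u_0)=3$, pumping the single unit onto the unique positively weighted neighbouring copy works with weight change $-1+1=0$. Only the subcase $s_{u_0}=0$, $g(u_0)=3=\gamma_{_I}(H)$ needs the dominating-set placement, and that is exactly where $\gamma(H)=2$---i.e.\ the exclusion of $\gamma_{_I}(H)=\gamma(H)=3$---enters.
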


\begin{proof}
Given an IDF $f$ on $G\circ H$, we define the set $R_f=\{x\in V(G): \, f(V(H_x))\geq 3\}$. Let $f$ be a $\gamma_{_I}(G\circ H)$-function such that $|R_f|$ is minimum among all $\gamma_{_I}(G\circ H)$-functions. 
Suppose that $|R_f|\geq 1$.  Let $u\in R_f$ such that $f(V(H_u))$ is maximum among all vertices belonging to $R_f$. Suppose that $f(V(H_u))> \gamma_{_I}(H)$. In this case we take a  $\gamma_{_I}(H)$-function $h$ and construct an IDF  $g$  defined on $G\circ H$ as $g(u,y)=h(y)$ for every $y\in V(H)$ and $g(x,y)=f(x,y)$ for  every $x\in V(G)\setminus \{u\}$ and $y\in V(H)$. Obviously, $\omega (g)<\omega(f)$, which is a contradiction. 
Thus,  $3\le f(V(H_u))\le \gamma_{_I}(H_u)=\gamma_{_I}(H)$. Now, we analyse the following two cases.

\vspace{0,2cm}
\noindent
Case 1. $f(V(H_u))\geq 4$.  Let $u'\in N(u)$ and $v\in V(H)$. We define a function $f'$ on $G\circ H$ as $f'(u,v)=f'(u',v)=2$, $f'(u,y)=f(u',y)=0$ for every $y\in V(H)\setminus \{v\},$ and $f'(x,y)=f(x,y)$ for  every $x\in V(G)\setminus \{u,u'\}$ and $y\in V(H)$. Notice that $f'$ is an IDF on $G\circ H$ with $\omega(f')\le \omega(f)$ and $|R_{f'}|<|R_f|$, which is a contradiction.

\vspace{0,2cm}
\noindent
Case 2. $f(V(H_u))=3$. Suppose that $\gamma_{_I}(H)\ne 3 $. Since $\gamma_{_I}(H)\geq 4$, there exist $u'\in N(u)$ and $v\in V(H)$ such that $f(u',v)\ge 1$. Hence, the function $f'$ defined in Case 1 is an IDF on  $G\circ H$ with $\omega(f')\le \omega(f)$ and $|R_{f'}|<|R_f|$, which is again a contradiction. 

Thus, $\gamma_{_I}(H)=3$, and so $\gamma(H)\ne 3$, which implies that $\gamma(H)=2$. Let  $\{v_1,v_2\}$ be a $\gamma(H)$-set.  Let $u'\in N(u)$ and   $v'\in V(H)$ such that $f(u',v')=\max \{f(u',y): \, y\in V(H)\}$. Consider the function $f'$ defined as $f'(u,v_1)=f'(u,v_2)=1$,  $f'(u,y)=0$ for every $y\in V(H)\setminus \{v_1,v_2\}$, $f'(u',v')=\min\{2,f(u',v')+1\}$, $f'(u',y)=0$ for every $y\in V(H)\setminus \{v'\}$, and $f'(x,y)=f(x,y)$ for every $x\in V(G)\setminus \{u,u'\}$ and $y\in V(H)$.  Notice that $f'$ is an IDF on $G\circ H$ with $\omega(f')\le \omega(f)$ and $|R_{f'}|<|R_f|$, which is a contradiction.

 Therefore, $R_f=\varnothing  $, and the result follows.
\end{proof}

\begin{theorem}\label{teo-lower-bound}
The following statements hold for any graph $G$  with no isolated vertex and any nontrivial graph $H$ with $\gamma_{_I}(H)\ne 3$ or $\gamma(H)\ne 3$. 
 
\begin{enumerate}
\item[{\rm (i)}] If $\gamma(H)=1$, then $\gamma_{_I}(G\circ H)=\gamma_{(2,1,0)}(G).$
\item[{\rm (ii)}] If $\gamma_2(H)=\gamma(H)=2$, then $\gamma_{_I}(G\circ H)=\gamma_{(2,2,0)}(G)$.
\item[{\rm (iii)}] If $\gamma_2(H)>\gamma(H)=2$, then $\gamma_{_I}(G\circ H)=\gamma_{(2,2,1)}(G)$.
\item[{\rm (iv)}] If $\gamma(H)\geq 3$, then $\gamma_{_I}(G\circ H)=\gamma_{(2,2,2)}(G).$ 
\end{enumerate}
\end{theorem}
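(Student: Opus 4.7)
The plan is to establish, for each part (i)--(iv), the matching upper and lower bounds $\gamma_{I}(G\circ H)\le \gamma_{w}(G)$ and $\gamma_{I}(G\circ H)\ge \gamma_{w}(G)$ for the appropriate weight vector $w$.

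For the upper bound, I would start from a $\gamma_{w}(G)$-function $g$ and lift it to an IDF $f$ on $G\circ H$ by concentrating a weight of $g(u)$ inside each fibre $H_u$ on a distinguished subset of $V(H)$: a universal vertex $v$ of $H$ in case (i), a pair $\{v_1,v_2\}$ forming a $2$-dominating set of size $2$ in case (ii), a $\gamma(H)$-set $\{v_1,v_2\}$ in case (iii), and any single vertex $y^{*}$ in case (iv). Verifying that $f$ is an IDF reduces, for each pair $(u,y)$ with $f(u,y)=0$, to checking $g(N_G(u))+f_u(N_H(y))\ge 2$, where $f_u$ denotes the restriction of $f$ to $H_u$; the inequality on $g(N_G(u))$ encoded in $w$ together with the chosen placement guarantees this bound in every case.

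For the lower bound, I would apply Lemma~\ref{lem-1} to obtain a $\gamma_{I}(G\circ H)$-function $f$ with $f(V(H_u))\le 2$ for every $u\in V(G)$, and set $g(u)=f(V(H_u))$, a function $V(G)\to\{0,1,2\}$ with $\omega(g)=\gamma_{I}(G\circ H)$. The goal is to show that $g$ is $w$-dominating. The key reformulation is that for any $(u,y)$ with $f(u,y)=0$, the IDF condition at $(u,y)$ rewrites as $g(N_G(u))+f_u(N_H(y))\ge 2$. When $g(u)=0$, the $H$-contribution is identically zero, so $g(N_G(u))\ge 2$. When $g(u)=1$ and $\gamma(H)\ge 2$, the unique vertex $y^{*}$ in the support of $f_u$ is not universal, so any $y$ non-adjacent to $y^{*}$ again forces $g(N_G(u))\ge 2$. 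When $g(u)=2$, the analysis is case-specific: (ii) imposes no constraint since $w_2=0$; in (iii), assuming $g(N_G(u))=0$ would make $f_u$ an IDF of $H$ of weight $2$, which either demands a universal vertex (contradicting $\gamma(H)=2$) or a $2$-dominating set of size $2$ (contradicting $\gamma_2(H)>2$); in (iv), assuming $g(N_G(u))\le 1$ would force either $\gamma(H)\le 2$ or $\gamma_{I}(H)\le 2$, both contradicting $\gamma(H)\ge 3$.

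I expect the main obstacle to be the lower bound argument when $g(u)=2$ in cases (iii) and (iv), where the weight-$2$ function $f_u$ on $V(H)$ must be dissected into the structural types ``all weight on one vertex'' and ``two vertices of weight $1$'', each to be confronted with the hypotheses on $\gamma(H)$ and $\gamma_2(H)$. The blanket hypothesis $\gamma_{I}(H)\ne 3$ or $\gamma(H)\ne 3$ enters only through Lemma~\ref{lem-1}, which is precisely what allows the derived function $g$ to take values in $\{0,1,2\}$.
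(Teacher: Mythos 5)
Your proposal is correct and takes essentially the same route as the paper: the lower bound projects a Lemma~\ref{lem-1}-normalized $\gamma_{_I}(G\circ H)$-function onto $G$ via $g(u)=f(V(H_u))$, and the upper bound lifts a $\gamma_w(G)$-function by concentrating weight on a universal vertex, a $\gamma_2(H)$-set, a $\gamma(H)$-set, or an arbitrary vertex, exactly as in the paper (your dissection of the weight-$2$ fibre function in cases (iii)--(iv) is just an expanded version of the paper's observation that a low-weight fibre leaves some $(x,y)$ with $f(N[(x,y)]\cap V(H_x))\le 1$). The only omission, trivially repaired within your framework, is the subcase $g(u)=1$ in part (i), where $w_1=1$ still requires the check $g(N_G(u))\ge 1$, obtained by choosing any $y\neq y^{*}$ so that the fibre contributes at most $1$ to $f(N(u,y))$.
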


\begin{proof}
Let $f(V_0,V_1,V_2)$ be a $\gamma_{I}(G\circ H)$-function which satisfies Lemma \ref{lem-1}. Let  $f'(X_0,X_1,X_2)$  be the function defined on $G$ by 
$X_1=\{x\in V(G): f(V(H_x))=1\}$ and $X_2=\{x\in V(G): f(V(H_x))=2\}$. Notice that $\gamma_{_I}(G\circ H)=\omega(f)=\omega(f')$. We claim that $f'$ is a   $\gamma_{(w_0,w_1,w_2)}(G)$-function. In order to prove this and find the values of $w_0$, $w_1$ and $w_2$, we differentiate the following three cases. 

\vspace{0,2cm}
\noindent
Case $1.$ $\gamma(H)=1$. 
 Assume that $x\in X_0$. Since $f(V(H_x))=0$, for any $y\in V(H)$ we have that  $f(N(x,y)\setminus V(H_x))\ge 2$. Thus,  $f'(N(x))\ge 2$. Now, assume that $x\in X_1$, and let $(x,y)\in V_1$ be the only vertex in $V(H_x)$ such that $f(x,y)>0$. Since $\gamma(H)=1$, for any $z\in V(H)\setminus \{y\}$, we have that $f(N(x,z)\setminus V(H_x))\ge 1$, which implies that $f'(N(x))\ge 1$. Therefore,  $f'$ is a  $(2,1,0)$-dominating function on $G$  and, as a consequence, $\gamma_{_I}(G\circ H)=\omega(f)=\omega(f')\ge \gamma_{(2,1,0)}(G)$.

Now, for any $\gamma_{(2,1,0)}(G)$-function  $g(W_0,W_1,W_2)$ and any universal vertex $v$ of $H$, the function $g'(W_0',W_1',W_2')$, defined by $W_2'=W_2\times \{v\}$ and $W_1'=W_1\times \{v\}$, is an IDF on $G\circ H$. Therefore, $\gamma_{_I}(G\circ H)\leq \omega(g')=\omega(g)=\gamma_{(2,1,0)}(G)$.

\vspace{0,2cm}
\noindent
Case $2.$ $\gamma(H)=2$.
 As in Case 1 we conclude that $f'(N(x))\ge 2$ for every  $x\in X_0$. Now, assume that $x\in X_1$, and let $(x,y)\in V_1$ be the only vertex in $V(H_x)$ such that $f(x,y)>0$. Since $\gamma(H)=2$, there exists a vertex $z\in V(H)$ such that $(x,z)\in V_0\setminus N(x,y)$. Hence,  $f(N(x,z)\setminus V(H_{x}))\ge 2$,  which implies that $f'(N(x))\ge 2$. Therefore,  $f'$ is a  $(2,2,0)$-dominating function on $G$  and, as a consequence, $\gamma_{_I}(G\circ H)=\omega(f)=\omega(f')\ge \gamma_{(2,2,0)}(G)$.
 
Now, if $\gamma_2(H)>\gamma(H)=2$, then for every $x\in X_2$, there exists $y\in V(H)$ such that $(x,y)\in V_0$ and $f(N(x,y)\cap V(H_x))\le 1$, which implies that $f(N(x,y)\setminus  V(H_x))\ge 1$, and so $f'(N(x))\ge 1$. Hence, $f'$ is a  $(2,2,1)$-dominating function on $G$  and, as a consequence, $\gamma_{_I}(G\circ H)=\omega(f)=\omega(f')\ge \gamma_{(2,2,1)}(G)$.

On the other side, if $\gamma_2(H)=2$, then  for any $\gamma_{(2,2,0)}(G)$-function  $g(W_0,W_1,W_2)$ and any $\gamma_2(H)$-set $S=\{v_1,v_2\}$, the function $g'(W_0',W_1',W_2')$, defined by $W_1'=(W_1\times \{v_1\})\cup (W_2\times S)$ and $W_2'=\varnothing$, is an IDF on $G\circ H$. Therefore, $\gamma_{_I}(G\circ H)\leq \omega(g')=\omega(g)=\gamma_{(2,2,0)}(G)$.
 
 Finally, if $\gamma_2(H)>\gamma(H)=2$  then  we take a $\gamma_{(2,2,1)}(G)$-function  $h(Y_0,Y_1,Y_2)$ and a  $\gamma(H)$-set $S'=\{v_1',v_2'\}$, and construct a function $h'(Y_0',Y_1',Y_2')$ on $G\circ H$ by making $Y_1'=(Y_1\times \{v_1'\})\cup (Y_2\times S')$ and $Y_2'=\varnothing$. Obviously, $h'$ is an IDF on $G\circ H$, and so we can conclude that $\gamma_{_I}(G\circ H)\leq \omega(h')=\omega(h)=\gamma_{(2,2,1)}(G)$.
 
\vspace{0,2cm}
\noindent
Case $3.$ $\gamma(H)\ge 3$.
In this case, for every $x\in V(G)$, there exists  $y\in V(H)$ such that $f(N[(x,y)]\cap V(H_x))=0$. Hence,  $f(N(x,y)\setminus V(H_{x}))\ge 2$,  which implies that $f'(N(x))\ge 2$ for every $x\in V(G)$. Therefore,  $f'$ is a  $(2,2,2)$-dominating function on $G$  and, as a consequence, $\gamma_{_I}(G\circ H)=\omega(f)=\omega(f')\ge \gamma_{(2,2,2)}(G)$.

On the other side, for any $\gamma_{(2,2,2)}(G)$-function  $g(W_0,W_1,W_2)$ and any $v\in V(H)$, the function $g'(W_0',W_1',W_2')$, defined by $W_2'=W_2\times \{v\}$ and $W_1'=W_1\times \{v\}$, is an IDF on $G\circ H$. Hence, $\gamma_{_I}(G\circ H)\leq \omega(g')=\omega(g)=\gamma_{(2,2,2)}(G)$.

According to the three cases above, the result follows.
\end{proof}

The following result considers the case  $\gamma_{_I}(H)= \gamma(H)=3$.  

\begin{theorem}\label{CasoRaro*}
If $H$ is a  graph with $\gamma_{_I}(H)= \gamma(H)=3$, then for any graph $G$,
  $$\gamma_{_I}(G\circ H)=\gamma_{(2,2,2,0)}(G).$$
\end{theorem}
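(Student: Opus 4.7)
The plan is to establish the equality by proving both inequalities. For the upper bound $\gamma_{_I}(G\circ H)\le \gamma_{(2,2,2,0)}(G)$, I take a $\gamma_{(2,2,2,0)}(G)$-function $g(W_0,W_1,W_2,W_3)$ and construct an IDF $f$ on $G\circ H$ of equal weight as follows: on each copy $H_x$ with $x\in W_i$ and $i\in\{0,1,2\}$, place the value $i$ at one arbitrary vertex of $V(H_x)$ (nothing if $i=0$), and on each copy with $x\in W_3$, assign a $\gamma_{_I}(H)$-function to $H_x$, which has weight $3$ by the hypothesis $\gamma_{_I}(H)=3$. For $(x,v)$ with $f(x,v)=0$ the IDF check splits cleanly: if $x\in W_0\cup W_1\cup W_2$, the contribution $g(N_G(x))\ge 2$ from neighbouring copies already suffices; if $x\in W_3$, the IDF condition inside $H_x$ gives $f(N_H(v)\cap V(H_x))\ge 2$.

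For the lower bound, let $f$ be a $\gamma_{_I}(G\circ H)$-function. I first argue that we may assume $f(V(H_x))\le 3$ for every $x\in V(G)$: if $f(V(H_x))\ge 4$ for some $x$, replacing $f$ on $H_x$ by a $\gamma_{_I}(H)$-function (of weight $3$) strictly decreases the total weight while preserving the IDF property---internally because the replacement is itself an IDF on $H$, and externally because for any $(y,z)$ with $y\in N_G(x)$ the inclusion $V(H_x)\subseteq N(y,z)$ gives $f(N(y,z))\ge f(V(H_x))\ge 4$, so at least $3$ units of contribution survive the change. This contradicts the minimality of $f$.

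Now define $g\colon V(G)\to\{0,1,2,3\}$ by $g(x)=f(V(H_x))$, so that $\omega(g)=\omega(f)$. Since the coordinate $w_3=0$ imposes no constraint, it remains to verify $g(N_G(x))\ge 2$ for every $x$ with $g(x)\in\{0,1,2\}$. In that case the restriction of $f$ to $V(H_x)$ has total weight at most $2<\gamma(H)=3$, so its support does not dominate $H$; hence there exists $v\in V(H)$ outside the closed neighbourhood of that support, which gives $f(x,v)=0$ and $f(N_H(v)\cap V(H_x))=0$, and the IDF inequality at $(x,v)$ then collapses to $g(N_G(x))=f(N(x,v))\ge 2$, as required. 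The delicacy lies precisely in this last step: the hypothesis $\gamma(H)=3$ is what guarantees that weight $\le 2$ on $H$ cannot dominate, whereas $\gamma_{_I}(H)=3$ powers both the construction on copies indexed by $W_3$ and the weight-reduction step.
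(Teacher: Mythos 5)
Your proof is correct and follows essentially the same route as the paper: the same projection $x\mapsto f(V(H_x))$ yields the $(2,2,2,0)$-dominating function on $G$ (with $\gamma(H)=3$ forcing an undominated vertex in any copy of weight at most $2$), and the same copy-wise construction using a weight-$3$ Italian dominating function on $H$ gives the upper bound. The only cosmetic difference is that you first normalize the minimum IDF so that $f(V(H_x))\le 3$ by a replacement argument, whereas the paper simply truncates by putting all $x$ with $f(V(H_x))\ge 3$ into $X_3$ and using $\omega(f)\ge\omega(f')$.
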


\begin{proof}
Let $f(V_0,V_1,V_2)$ be a $\gamma_{I}(G\circ H)$-function, and $f'(X_0,X_1,X_2,X_3)$ the function defined on $G$ by $X_1=\{x\in V(G): f(V(H_x))=1\}$, $X_2=\{x\in V(G): f(V(H_x))=2\}$ and $X_3=\{x\in V(G): f(V(H_x))\ge  3\}$.
 We claim that $f'$ is a   $(2,2,2,0)$-dominating function on $G$. 

Let $x\in X_0\cup X_1\cup X_2$. Since $f(V(H_x))\le 2$ and $\gamma(H)=3$, there exists $y\in V(H)$ such that 
 $f(N[(x,y)]\cap V(H_x))=0$. Thus,  $f'(N(x))\ge 2$ for every $x\in X_0\cup X_1\cup X_2$, which implies that  $f'$ is a  $(2,2,2,0)$-dominating function on $G$. Therefore, 
$\gamma_{_I}(G\circ H)=\omega(f)\ge \omega(f')\ge \gamma_{(2,2,2,0)}(G)$.

On the other side, let $h(Y_0,Y_1,Y_2,Y_3)$ be a 
$\gamma_{(2,2,2,0)}(G)$-function,  $h_1$  a $\gamma_{_I}(H)$-function  and $v\in V(H)$. We define a function $g$  on $G\circ H$  by  $g(x,v)=h(x)$ for every $x\in V(G)\setminus Y_3$, $g(x,y)=0$ for every  $x\in V(G)\setminus Y_3$ and $y\in V(H)\setminus \{v\}$, and $g(x,y)=h_1(y)$ for every $(x,y)\in Y_3\times V(H)$.  A simple case analysis shows that $g$ is an IDF  on $G\circ H$. Therefore, $\gamma_{_I}(G\circ H)\le \omega(g)=\omega(h)= \gamma_{(2,2,2,0)}(G)$. 
\end{proof}

\begin{figure}[ht]
\centering
\begin{tikzpicture}[scale=2]
\node[draw, shape=circle, fill=black, scale=.5] at (-1,0cm)(A){};
\node[draw, shape=circle, fill=black, scale=.5] at (-.5,0)(B){};
\node[draw, shape=circle, fill=black, scale=.5] at (.5,0)(C){};
\node[draw, shape=circle, fill=black, scale=.5] at (1,0)(D){};
\node[draw, shape=circle, fill=white, scale=.5] at (-1,-.7)(E){};
\node[draw, shape=circle, fill=white, scale=.5] at (-1,.7)(F){};
\node[draw, shape=circle, fill=white, scale=.5] at (0,-.7)(G){};
\node[draw, shape=circle, fill=white, scale=.5] at (0,.7)(H){};
\node[draw, shape=circle, fill=white, scale=.5] at (1,-.7)(I){};
\node[draw, shape=circle, fill=white, scale=.5] at (1,.7)(J){};

\draw (A)--(B)--(H)--(C)--(D)--(J);
\draw (I)--(D); \draw (C)--(G)--(B);
\draw (E)--(A)--(F);

\node at ([shift={(-.15,0)}]A) {$2$};
\node at ([shift={(.15,0)}]B) {$1$};
\node at ([shift={(-.15,0)}]C) {$1$};
\node at ([shift={(.15,0)}]D) {$2$};
\end{tikzpicture}
\hspace{1cm}
\begin{tikzpicture}[scale=2]
\node[draw, shape=circle, fill=black, scale=.5] at (-1,0cm)(A){};
\node[draw, shape=circle, fill=white, scale=.5] at (-.5,0)(B){};
\node[draw, shape=circle, fill=black, scale=.5] at (.5,0)(C){};
\node[draw, shape=circle, fill=black, scale=.5] at (1,0)(D){};
\node[draw, shape=circle, fill=white, scale=.5] at (-1,-.7)(E){};
\node[draw, shape=circle, fill=white, scale=.5] at (-1,.7)(F){};
\node[draw, shape=circle, fill=white, scale=.5] at (0,-.7)(G){};
\node[draw, shape=circle, fill=white, scale=.5] at (0,.7)(H){};
\node[draw, shape=circle, fill=white, scale=.5] at (1,-.7)(I){};
\node[draw, shape=circle, fill=white, scale=.5] at (1,.7)(J){};

\draw (A)--(B)--(H)--(C)--(D)--(J);
\draw (I)--(D); \draw (C)--(G)--(B);
\draw (E)--(A)--(F);

\node at ([shift={(-.15,0)}]A) {$2$};
\node at ([shift={(-.15,0)}]C) {$2$};
\node at ([shift={(.15,0)}]D) {$2$};
\end{tikzpicture}
\caption{This figure shows two 
$\gamma_{(2,2,0)}(G)$-functions on the same graph. The function on the left is also a   
$\gamma_{(2,2,1)}(G)$-function.}\label{FigRaro}
\end{figure}
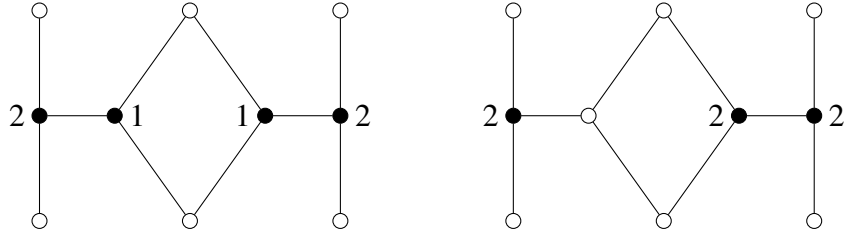

The graph shown in Figure \ref{FigRaro} satisfies  $6=\gamma_{(2,2,0)}(G)=\gamma_{(2,2,1)}(G)<7=\gamma_{(2,2,2,0)}(G)<\gamma_{(2,2,2)}(G)=8$.

\section{Preliminary results on $w$-domination} \label{NewDomination}
In this section, we fix the notation  $\mathbb{Z}^+=\{1,2,3,\dots\}$ and $\mathbb{N}=\mathbb{Z}^+\cup\{0\}$ for the sets of positive and nonnegative integers, respectively. 

Throughout this section, we will repeatedly apply, without explicit mention, the follo\-wing necessary and sufficient condition for the existence of a  $w$-dominating function. 

\begin{remark}
Let $G$ be a graph of minimum degree $\delta$ and let
$w=(w_0, \dots ,w_l)\in \mathbb{Z}^+\times \mathbb{N}^l$. If $ w_0\ge   \cdots \ge w_l$, then  there exists a  $w$-dominating function on $G$ if and only if $w_l\le l\delta .$
\end{remark}

\begin{proof}
Let $w=(w_0, \dots ,w_l)\in \mathbb{Z}^+\times \mathbb{N}^l$ such that $ w_0 \ge \cdots \ge w_l$.  If $w_l\le l\delta $, then the function $f$,  defined by $f(v)=l$ for every $v\in V(G)$,  is a  $w$-dominating function on $G$, as $V_l=V(G)$ and for any $x\in V_l$, $f(N(x))\ge l\delta\ge w_l$.

Now, suppose that $w_l> l\delta $. If  $g$ is  a  $w$-dominating function on $G$, then for any vertex $v$ of degree $\delta$ we have $g(N(v))\le \delta l < w_l\le w_{l-1}\le \dots \le  w_0$, which is a contradiction. Therefore, the result follows.
\end{proof}

 We will show that in general the $w$-domination numbers satisfy a certain monotonicity. Given two integer vectors  $w=(w_0, \dots,w_l)$ and $w'=(w_0', \dots,w_l')$, we say that  $w \prec w'$ if $w_i\le w_i'$ for every $i\in \{0,\dots,l\}$. With this notation in mind, we can state the next remark which is  direct consequence of the definition of $w$-domination number.

\begin{remark}\label{REmarkMonotonicity}
Let $G$ be a graph of minimum degree $\delta$ and let   $w=(w_0, \dots,w_l),w'=(w_0', \dots,w_l')\in \mathbb{Z}^+\times \mathbb{N}^l$ such that $ w_i\ge w_{i+1}$ and  $w_i'\ge w_{i+1}'$ for every $i\in \{0, \dots , l-1\}$ . If $w\prec w'$ and $w_l'\le l\delta$, then every $w'$-dominating function is a $w$-dominating function and, as a consequence, $$\gamma_w(G)\le \gamma_{w'}(G).$$
\end{remark}

We would emphasize the following remark on the specific cases of domination parameters considered in Section \ref{Sectionlexicographic}. Obviously, when we write $\gamma_{(2,2,2)}(G)$ or $\gamma_{(2,2,1)}(G)$, we are assuming that $G$ has minimum degree $\delta\ge 1$.

\begin{remark}\label{REmarkMonotonicityPart}
The following statements hold.
 
\begin{enumerate}
\item[{\rm (i)}]  $\gamma_{_I}(G)=\gamma_{(2,0,0)}(G)\le \gamma_{(2,1,0)}(G)\le \gamma_{(2,2,0)}(G)\le \gamma_{(2,2,1)}(G)\le \gamma_{(2,2,2)}(G).$
\item[{\rm (ii)}] If $w_2\in \{1,2\}$, then $\gamma_{(1,0,w_2)}(G)=\gamma_{(1,0,0)}(G)=\gamma(G)$ and $\gamma_{(1,1,w_2)}(G)=\gamma_{(1,1,0)}(G)=\gamma_{t}(G)$.
\item[{\rm (iii)}] For any integer $k\ge 3$, there exists an infinite  family $\mathcal{H}_k$ of graphs such that for every graph $G\in \mathcal{H}_k$,
$\gamma_{_I}(G)=\gamma_{(2,0,0)}(G)= \gamma_{(2,1,0)}(G)= \gamma_{(2,2,0)}(G)= \gamma_{(2,2,1)}(G)= \gamma_{(2,2,2)}(G)=k.$
\item[{\rm (iv)}] There exists an infinite family of graphs such that
$\gamma_{_I}(G)<\gamma_{(2,1,0)}(G)< \gamma_{(2,2,0)}(G)<\gamma_{(2,2,1)}(G)<\gamma_{(2,2,2)}(G).$
\end{enumerate} 
\end{remark}

In order to see that the remark above holds, we just have to construct families of graphs satisfying (iii) and (iv), as (i) is a particular case of Remark \ref{REmarkMonotonicity} 
and (ii) is derived from the definition of $(w_0,w_1,w_2)$-domination number. In the case of (iii), we  construct a family $\mathcal{H}_k=\{G_{k,r}: \, r\in \mathbb{Z}^+ \}$ as follows. Let $k\ge 3$ be an integer, and let $N_{r}$ be the empty graph  of order $r$. For any positive integer $r$ we construct a graph $G_{k,r}\in \mathcal{H}_k$ from a complete graph $K_k$ and $\binom{k}{2}$ copies of $N_r$, in such  way that for each pair of different vertices $\{x,y\}$ of $K_k$ we choose one copy of $N_r$ and  connect  every vertex of $N_r$ with $x$ and $y$, making $x$ and $y$ vertices of degree $(k-1)(r+1)$ in $G_{k,r}$. 
For instance, the graph $G_{3,1}$ is isomorphic to the graph $G_2$ shown in Figure \ref{ZZZ}. It is readily seen that $\gamma_{_I}(G_{k,r})=\gamma_{(2,2,2)}(G_{k,r})=k$. 
On the other hand, in the case of  (iv), we consider the family of cycles of order $n\geq 10$ with $n\equiv 1 \pmod 3$. For these graphs we have that 
 $\gamma_I(C_n)<\gamma_{(2,1,0)}(C_n)<\gamma_{(2,2,0)}(C_n)<\gamma_{(2,2,1)}(C_n)<\gamma_{(2,2,2)}(C_n).$ 
The specific values of $\gamma_{(w_0,w_1,w_2)}(C_n)$ will be given in Subsections \ref{SubSection(2,2,2)}, \dots ,\ref{SubSection(2,1,0)}.

Next we show a class of graphs where 
$ \gamma_{(w_0,\dots ,w_l)}(G)=w_0\gamma(G)$ whenever $l\ge w_0\ge  \cdots \ge w_l$. To this end, we need to introduce some additional notation and terminology.
Given two graphs $G_1$ and $G_2$, the \emph{corona product graph} $G_1\odot G_2$ is the graph obtained from $G_1$ and $G_2$, by taking one copy of $G_1$ and $|V(G_1)|$ copies of $G_2$ and joining by an edge every vertex from the $i^{th}$-copy of $G_2$ with the $i^{th}$-vertex of $G_1$. For every $x\in V(G_1)$, the copy of $G_2$ in $G_1\odot G_2$ associated to $x$ will be denoted by $G_{2,x}$.  It is well known that $\gamma(G_1\odot G_2)=|V(G_1)|$ and, if $G_1$ does not have isolated vertices, then $\gamma_t(G_1\odot G_2)=\gamma(G_1\odot G_2)=|V(G_1)|$.

\begin{theorem}\label{IgualdadCorona}
Let $G\cong G_1\odot G_2$ be a corona graph where $G_1$ does not have isolated vertices, and let $w=(w_0,\dots ,w_l)\in \mathbb{Z}^+\times \mathbb{N}^l$. If $l\ge w_0\ge  \cdots \ge w_l$ and $|V(G_2)|\ge w_0$, then
$$\gamma_{w}(G)=w_0\gamma(G).$$
\end{theorem}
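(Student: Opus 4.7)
The plan is to establish the equality by matching upper and lower bounds of $w_0\lvert V(G_1)\rvert$, using the well-known fact quoted just before the statement that $\gamma(G_1\odot G_2)=\lvert V(G_1)\rvert$.

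For the upper bound, I would exhibit an explicit $w$-dominating function of weight $w_0\lvert V(G_1)\rvert$. Define $f\colon V(G)\to\{0,\dots,l\}$ by $f(x)=w_0$ for every $x\in V(G_1)$ and $f(y)=0$ for every vertex $y$ of each copy $G_{2,x}$. This is well-defined in the codomain because $l\ge w_0$. Every $y\in V(G_{2,x})$ satisfies $N_G(y)\subseteq V(G_{2,x})\cup\{x\}$, hence $f(N_G(y))\ge f(x)=w_0$. Every $x\in V(G_1)$ has at least one neighbour in $G_1$ (since $G_1$ has no isolated vertices), so $f(N_G(x))\ge w_0\ge w_{w_0}$, using the monotonicity hypothesis $w_0\ge\cdots\ge w_l$. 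Thus $f$ is $w$-dominating with $\omega(f)=w_0\lvert V(G_1)\rvert$.

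For the lower bound, the key observation is that the sets $A_x=V(G_{2,x})\cup\{x\}$, for $x\in V(G_1)$, partition $V(G)$, and that $N_G(y)\subseteq A_x$ for every $y\in V(G_{2,x})$. I would show that any $w$-dominating function $f$ on $G$ satisfies $f(A_x)\ge w_0$ for every $x\in V(G_1)$, split into two cases. If there exists $y\in V(G_{2,x})$ with $f(y)=0$, then $f(N_G(y))\ge w_0$ and $N_G(y)\subseteq A_x\setminus\{y\}$, so $f(A_x)\ge f(N_G(y))\ge w_0$. Otherwise $f(z)\ge 1$ for all $z\in V(G_{2,x})$, and the cardinality hypothesis $\lvert V(G_2)\rvert\ge w_0$ yields $f(A_x)\ge f(V(G_{2,x}))\ge \lvert V(G_{2,x})\rvert\ge w_0$. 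Summing over $x$ gives $\omega(f)\ge w_0\lvert V(G_1)\rvert$, completing the proof.

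I do not expect a serious obstacle: the corona structure forces $N_G(y)\subseteq A_x$ for pendant-copy vertices, which is what makes the partition argument work. The only delicate point is to remember to use both hypotheses in the lower bound, namely the open-neighbourhood constraint $f(N_G(y))\ge w_0$ (when some vertex of $V(G_{2,x})$ has value $0$) and the size condition $\lvert V(G_2)\rvert\ge w_0$ (when every vertex of $V(G_{2,x})$ has positive value); neither alone suffices, and together they yield the uniform bound $f(A_x)\ge w_0$ on each block of the partition.
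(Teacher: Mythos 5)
Your proof is correct and takes essentially the same route as the paper's: the same function ($f(x)=w_0$ on $V(G_1)$, zero elsewhere) for the upper bound, and the same block-wise bound $f(V(G_{2,x})\cup\{x\})\ge w_0$ for the lower bound, where your two cases (some $y\in V(G_{2,x})$ with $f(y)=0$ versus all values positive) are exactly the dichotomy the paper compresses into its one-line contradiction ``as $|V(G_2)|\ge w_0$''. If anything, your write-up makes explicit two points the paper leaves implicit, namely the role of $l\ge w_0$ for well-definedness and the verification at vertices of $V(G_1)$ via $w_0\ge w_{w_0}$.
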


\begin{proof}
Since $G_1$ does not have isolated vertices, the upper bound $\gamma_{w}(G)\le w_0|V(G_1)|=w_0\gamma(G)$ is straightforward, as the function $f$, defined by $f(x)=w_0$ for every vertex $x\in V(G_1)$ and $f(x)=0$ for every $x\in V(G)\setminus V(G_1)$, is a $w$-dominating function on $G$.

On the other hand, let $f$ be a $ \gamma_{w}(G)$-function and suppose that there exists $x\in V(G_1)$ such that $f(V(G_{2,x}))+f(x)\le w_0-1$. In such a case, $f(N[y])\le w_0-1$ for every $y\in V(G_{2,x})$,  which is a contradiction, as $|V(G_2)|\ge w_0$.   Therefore,  $\gamma_{w}(G)= \omega(f)\ge  w_0|V(G_1)|=w_0 \gamma(G)$.
\end{proof}

\begin{proposition}\label{obs-subgraph-general}
Let $G$ be a graph of order $n$. Let
$w=(w_0,\dots ,w_l)\in \mathbb{Z}^+\times \mathbb{N}^l$ such that $ w_0\ge  \cdots \ge w_l$.
If $G'$ is a spanning subgraph of $G$ with 
 minimum degree $\delta'\ge \frac{w_l }{l}$,   then $$\gamma_{w}(G)\leq \gamma_{w}(G').$$
\end{proposition}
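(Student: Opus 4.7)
The plan is short because the statement is essentially a monotonicity observation: enlarging the edge set can only help dominate. I would first use the hypothesis $\delta'\ge w_l/l$ together with the Remark preceding the proposition to guarantee that the set of $w$-dominating functions on $G'$ is nonempty, so that $\gamma_w(G')$ is well defined and a $\gamma_w(G')$-function exists. (Indeed, the constant function $f\equiv l$ is $w$-dominating on $G'$, since every vertex lies in $V_l$ and $f(N_{G'}(v))\ge l\delta'\ge w_l$.)

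Next I would pick any $\gamma_w(G')$-function $f$ and argue that the very same function, viewed on $V(G)=V(G')$, is a $w$-dominating function on $G$. The key observation is that $G'$ is a spanning subgraph of $G$, hence $N_{G'}(v)\subseteq N_G(v)$ for every $v\in V(G)$. Since $f$ takes only nonnegative values, summing over the possibly larger set $N_G(v)$ can only increase the total, i.e.
$$f(N_G(v))\;=\;f(N_{G'}(v))+\sum_{u\in N_G(v)\setminus N_{G'}(v)}f(u)\;\ge\; f(N_{G'}(v)).$$
For every $v\in V_i$ we therefore get $f(N_G(v))\ge f(N_{G'}(v))\ge w_i$, which is precisely the $w$-domination condition on $G$. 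Consequently $f$ is a $w$-dominating function on $G$, and so $\gamma_w(G)\le \omega(f)=\gamma_w(G')$.

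There is no real obstacle here; the only thing to be careful about is the role of the minimum-degree hypothesis. The condition $\delta'\ge w_l/l$ is exactly what is needed for a $w$-dominating function on $G'$ to exist (by the preceding Remark, given that $w_0\ge\cdots\ge w_l$), so the right-hand side $\gamma_w(G')$ is finite and the inequality is meaningful. Once this is noted, the proof reduces to the two-line nonnegativity argument above.
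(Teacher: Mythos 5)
Your proof is correct and is essentially the paper's argument: the paper deletes the edges of $E(G)\setminus E(G')$ one at a time and at each step uses exactly your key observation that, by nonnegativity of $f$ and $N_{G'}(v)\subseteq N_G(v)$, a $w$-dominating function survives the addition of edges, so your one-shot version is just a streamlined form of the same idea. Your explicit note that $\delta'\ge w_l/l$ guarantees the existence of a $w$-dominating function on $G'$ (so $\gamma_w(G')$ is well defined) is a point the paper leaves implicit, having announced earlier that the existence Remark is applied without explicit mention.
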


\begin{proof}
Let $E^-=\{e_1,\dots, e_k\}$ be the set of all edges of $G$ not belonging to the edge set of $G'$. Let $G'_0=G$ and, for every $i\in \{1,\dots, k\}$, let  $X_i=\{e_1,\dots , e_i\}$ and $G'_i=G-X_i$, the edge-deletion subgraph of $G$ induced by  $E(G)\setminus X_i$.  Since any $w$-dominating function on  $G'_i$ is a  $w$-dominating function on  $G'_{i-1}$, we can conclude that $\gamma_{w}(G'_{i-1})\leq \gamma_{w}(G'_{i})$. Hence,  $\gamma_{w}(G)=\gamma_{w}(G'_0)\leq \gamma_{w}(G'_1)\le \cdots \le \gamma_{w}(G'_k)= \gamma_{w}(G')$.
\end{proof}

From Proposition \ref{obs-subgraph-general}  we obtain the following result.

\begin{corollary}\label{corollaryInducedSubgraph}
Let $G$ be a graph of order $n$ and $w=(w_0,\dots ,w_l)\in \mathbb{Z}^+\times \mathbb{N}^l$ such that $ w_0\ge  \cdots \ge w_l$.

\begin{itemize}
\item If $G$ is a Hamiltonian graph and $w_l\le 2l$, then $\gamma_{w}(G)\leq \gamma_{w}(C_n)$.

\item If $G$ has a Hamiltonian path and $w_l\le l$,  then $\gamma_{w}(G)\leq \gamma_{w}(P_n)$.
\end{itemize} 
\end{corollary}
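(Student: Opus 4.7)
The plan is to derive each item as a direct application of Proposition \ref{obs-subgraph-general}, using $C_n$ (respectively $P_n$) as the relevant spanning subgraph. The only real content is matching the minimum-degree hypothesis of that proposition against the assumed bounds on $w_l$.

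First I would recall that if $G$ is Hamiltonian of order $n$, then by definition $G$ contains a spanning cycle, that is, a spanning subgraph $G'$ isomorphic to $C_n$. Since every vertex of $C_n$ has degree exactly $2$, the minimum degree of this spanning subgraph is $\delta'=2$. The hypothesis $w_l\le 2l$ is equivalent to $\delta'=2\ge w_l/l$, which is exactly the requirement of Proposition \ref{obs-subgraph-general}. Applying that proposition therefore yields $\gamma_w(G)\le \gamma_w(G')=\gamma_w(C_n)$.

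For the second item, if $G$ contains a Hamiltonian path then it has a spanning subgraph $G''$ isomorphic to $P_n$, whose minimum degree is $\delta''=1$. The hypothesis $w_l\le l$ is equivalent to $\delta''=1\ge w_l/l$, so Proposition \ref{obs-subgraph-general} applies again and gives $\gamma_w(G)\le \gamma_w(G'')=\gamma_w(P_n)$.

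There is essentially no obstacle here; the only small point to verify is that the monotonicity chain $w_0\ge\cdots\ge w_l$ assumed in the statement coincides with the hypothesis of Proposition \ref{obs-subgraph-general}, and that the numerical thresholds $2l$ and $l$ are the right ones to guarantee, via the bound $\delta'\ge w_l/l$, that $w$-dominating functions exist on the spanning subgraphs in question (so the proposition is not being applied vacuously). Both verifications are immediate.
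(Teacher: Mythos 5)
Your proposal is correct and is exactly the paper's intended argument: the paper presents this corollary as an immediate consequence of Proposition \ref{obs-subgraph-general}, applied to the spanning cycle (where $\delta'=2\ge w_l/l$ corresponds to $w_l\le 2l$) and the spanning path (where $\delta'=1\ge w_l/l$ corresponds to $w_l\le l$). Nothing further is needed.
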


In order to derive lower bounds on the $w$-domination number, we need to state the following useful lemma.
\begin{lemma}\label{LemmaCotasINf}
Let $G$ be a graph with no isolated vertex, maximum degree $\Delta$ and order $n$. For any $w$-dominating function $f(V_0,\dots,V_l)$ on $G$ such that $ w_0\ge  \cdots \ge w_l$, 
$$\Delta\omega(f)\ge w_0n + \sum_{i=1}^l(w_i-w_0)|V_i|.$$
\end{lemma}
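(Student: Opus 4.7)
The plan is to exploit the defining inequality $f(N(v)) \ge w_i$ for every $v\in V_i$ by summing it over all vertices of $G$, and then re-count the sum $\sum_{v\in V(G)} f(N(v))$ from the perspective of the contributors.

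\textbf{Step 1 (summing the domination condition).} First I would note that, since $f$ is a $w$-dominating function, for each $i\in\{0,\dots,l\}$ and each $v\in V_i$ we have $f(N(v))\ge w_i$. Summing over all $v\in V(G)=\bigcup_{i=0}^l V_i$ yields
\[
\sum_{v\in V(G)} f(N(v)) \;\ge\; \sum_{i=0}^l w_i |V_i|.
\]

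\textbf{Step 2 (double counting).} Next I would re-express the left-hand side by counting the contribution of each $u\in V(G)$: the value $f(u)$ appears in $f(N(v))$ exactly once for every neighbour $v$ of $u$. Hence
\[
\sum_{v\in V(G)} f(N(v)) \;=\; \sum_{u\in V(G)} f(u)\deg(u) \;\le\; \Delta \sum_{u\in V(G)} f(u) \;=\; \Delta\,\omega(f).
\]
Combining this with Step 1 gives $\Delta\,\omega(f)\ge \sum_{i=0}^l w_i|V_i|$.

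\textbf{Step 3 (algebraic reshaping).} Finally I would rewrite the right-hand side in the form prescribed by the statement. Using $n=\sum_{i=0}^l |V_i|$, I have
\[
\sum_{i=0}^l w_i|V_i| \;=\; w_0|V_0| + \sum_{i=1}^l w_i|V_i| \;=\; w_0\Bigl(n-\sum_{i=1}^l|V_i|\Bigr) + \sum_{i=1}^l w_i|V_i| \;=\; w_0 n + \sum_{i=1}^l (w_i-w_0)|V_i|,
\]
which yields the desired inequality.

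There is no real obstacle here: the hypothesis $w_0\ge\cdots\ge w_l$ is not even needed for the inequality itself (it only ensures the differences $w_i-w_0$ are nonpositive, which is the useful direction when this lemma is later applied to bound $\omega(f)$ from below via the sizes $|V_i|$). The only care needed is in the double-counting step and the purely algebraic rewriting with $n=\sum_i|V_i|$.
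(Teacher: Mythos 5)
Your proposal is correct and follows essentially the same argument as the paper: double counting $\sum_{v\in V(G)} f(N(v))$ as $\sum_{u\in V(G)} \deg(u) f(u) \le \Delta\,\omega(f)$, bounding it below via the $w$-domination condition, and rewriting with $n = \sum_{i=0}^{l} |V_i|$. Your side remark that the monotonicity hypothesis $w_0 \ge \cdots \ge w_l$ plays no role in the inequality itself is also accurate.
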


\begin{proof}
The result follows from the simple fact that the contribution of any vertex $x\in V(G)$ to the sum $\displaystyle\sum_{x\in V(G)}f(N(x))$ equals $\deg(x)f(x)$, where $\deg(x)$ denotes the degree of $x$. Hence,
\begin{align*}
\Delta\omega(f)&=
\Delta\sum_{x\in V(G)}f(x)\\
&\ge \sum_{x\in V(G)}\deg(x)f(x)\\
& = \sum_{x\in V(G)}f(N(x))\\
&\ge w_0|V_0|+\sum_{i=1}^l w_i|V_i|\\
&= w_0n + \sum_{i=1}^l(w_i-w_0)|V_i|.
\end{align*}
Therefore, the result follows.
\end{proof}

\begin{corollary}\label{CorollaryLowerBounds}
The following statements hold for $k, l\in \mathbb{Z}^+$ and a graph $G$  with  minimum degree $\delta\ge 1$, maximum degree $\Delta$ and order $n$.
\begin{enumerate}
\item[{\rm (i)}] If $k\le l \delta+1$ and $w=({\small \underbrace{k+l-1,k+l-2, \dots ,k-1}_{l+1}})$, then 
      $\gamma_{w}(G)\ge \left\lceil  \frac{(k+l-1)n}{\Delta+1} \right\rceil$     
\item[{\rm (ii)}]  If $k\le l \delta$ and $w=({\small \underbrace{k,\dots ,k}_{l+1}})$, then 
      $\gamma_{w}(G)\ge \left\lceil  \frac{kn}{\Delta} \right\rceil$      
\item[{\rm (iii)}] If $k\le l \delta+1$ and $w=({\small \underbrace{k,k-1,\dots ,k-1}_{l+1}})$, then 
      $\gamma_{w}(G)\ge \left\lceil  \frac{kn}{\Delta+1} \right\rceil$
\item[{\rm (iv)}] Let $w=(w_0,\dots ,w_l)$ with $w_0\ge \cdots \ge w_l$. If $l\delta \ge w_l$,  then 
      $\gamma_{w}(G)\ge  \left\lceil  \frac{w_0n}{\Delta+w_0} \right\rceil$.
\end{enumerate}
\end{corollary}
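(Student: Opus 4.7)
The plan is to apply Lemma \ref{LemmaCotasINf} to a $\gamma_w(G)$-function $f(V_0,\dots,V_l)$ in each of the four cases, and then to rearrange the resulting inequality
$$\Delta\omega(f)\ge w_0 n+\sum_{i=1}^l(w_i-w_0)|V_i|$$
to isolate $\omega(f)=\gamma_w(G)$. The hypotheses on $w_l$ relative to $l\delta$ (namely $k-1\le l\delta$ in (i) and (iii), $k\le l\delta$ in (ii), and $w_l\le l\delta$ in (iv)) guarantee via the existence remark that $\gamma_w(G)$ is well defined.

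For case (ii) the weights are constant, so the correction term vanishes and the lemma immediately yields $\Delta\omega(f)\ge kn$. For cases (i) and (iii) the correction term is nonpositive and can be estimated in terms of $\omega(f)=\sum_{i=1}^{l}i|V_i|$. In (i), with $w_i=k+l-1-i$, one has $w_i-w_0=-i$, so the correction equals exactly $-\omega(f)$ and the lemma gives $(\Delta+1)\omega(f)\ge (k+l-1)n$. In (iii), with $w_i-w_0=-1$ for every $i\ge 1$, the correction equals $-\sum_{i=1}^l|V_i|\ge -\sum_{i=1}^l i|V_i|=-\omega(f)$, producing $(\Delta+1)\omega(f)\ge kn$.

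For case (iv) the same philosophy applies but with a coarser bound: since $0\le w_i\le w_0$, we have $w_i-w_0\ge -w_0$, hence
$$\sum_{i=1}^l (w_i-w_0)|V_i|\;\ge\; -w_0\sum_{i=1}^l |V_i|\;\ge\; -w_0\,\omega(f),$$
which gives $(\Delta+w_0)\omega(f)\ge w_0 n$. In all four cases, dividing through and rounding up (since $\omega(f)\in\mathbb{Z}$) yields the stated bound.

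There is really no serious obstacle here: the conceptual work is already contained in Lemma \ref{LemmaCotasINf}, and all that remains is bookkeeping. The only point requiring a little care is the estimate $\sum_{i=1}^l|V_i|\le \omega(f)$ used in (iii) and (iv), which relies on the indices starting at $i=1$ so that $i|V_i|\ge |V_i|$; this is what allows the $-\omega(f)$ term to absorb the correction and produce the denominators $\Delta+1$ and $\Delta+w_0$, respectively.
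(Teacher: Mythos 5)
Your proposal is correct and follows exactly the intended route: the paper states this result as an immediate corollary of Lemma \ref{LemmaCotasINf}, and your case-by-case rearrangements (the correction term being $-\omega(f)$ in (i), vanishing in (ii), and being absorbed via $\sum_{i=1}^l|V_i|\le\sum_{i=1}^l i|V_i|=\omega(f)$ in (iii) and (iv)) are precisely the computations the authors leave to the reader. You also correctly check that the hypotheses $k\le l\delta+1$, $k\le l\delta$ and $l\delta\ge w_l$ guarantee existence of a $w$-dominating function via the paper's first remark.
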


In the next subsections we shall show that lower bounds above are tight. 
Corollary \ref{CorollaryLowerBounds} implies the following known bounds.
$$
\gamma(G)\ge \left\lceil  \frac{n}{\Delta+1} \right\rceil, \quad \gamma_t(G)\ge \left\lceil  \frac{n}{\Delta} \right\rceil, \quad \gamma_{_I}(G)\ge \left\lceil  \frac{2n}{\Delta+2} \right\rceil, \quad \gamma_{tI}(G)\ge \left\lceil  \frac{2n}{\Delta+1} \right\rceil, $$
$$\gamma_{k}(G)\ge  \left\lceil  \frac{kn}{\Delta+k} \right\rceil, \quad \gamma_{\times k}(G)\ge \left\lceil  \frac{kn}{\Delta+1} \right\rceil, \text{  }\gamma_{\{k\}}\ge \left\lceil\dfrac{kn}{\Delta+1}\right\rceil \text{  } \text{ and } \text{  }  \gamma_{\times k, t}(G)\ge \left\lceil  \frac{kn}{\Delta} \right\rceil.
$$ 

It is readily seen that
$\gamma_{(w_0,\dots ,w_l)}(G)=1 $ if and only if $w_0=1$, $w_1=0$ and $\gamma(G)=1$. Next we characterize the graphs with $\gamma_{(w_0,\dots ,w_l)}(G)=2.$

 \begin{theorem}\label{teo-case=2}
 Let $w=(w_0,\dots ,w_l)\in \mathbb{Z}^+\times \mathbb{N}^l$ such that $ w_0\ge  \cdots \ge w_l$.
For a  graph $G$ of order at least three, $\gamma_{(w_0,\dots ,w_l)}(G)=2$ if and only if one of the following conditions holds.
\begin{enumerate}
\item[{\rm (i)}] $w_2=0$, $\gamma(G)=1$ and either $w_0=2$ or $w_0=w_1=1$.
\item[{\rm (ii)}] $w_0=1$, $w_1=0$ and $\gamma(G)=2$.
\item[{\rm (iii)}] $w_0=1$, $w_1=1$ and $\gamma_t(G)=2$.
\item[{\rm (iv)}] $w_0=2$, $w_1=0$ and $\gamma_2(G)=2$.
\item[{\rm (v)}] $w_0=2$, $w_1=1$ and $\gamma_{\times 2}(G)=2$.
\end{enumerate}
\end{theorem}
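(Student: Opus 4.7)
My plan is to classify weight-$2$ $w$-dominating functions of $G$ by their support and then match each case to (i)-(v). Any $w$-dominating function $f$ with $\omega(f)=2$ has exactly one of two shapes:
(a) $V_2=\{u\}$ for some vertex $u$ and $V_j=\varnothing$ for $j\notin\{0,2\}$; or
(b) $V_1=\{u_1,u_2\}$ for two distinct vertices $u_1,u_2$ and $V_j=\varnothing$ for $j\notin\{0,1\}$,
since no vertex can carry weight $\ge 3$ without exceeding $\omega(f)=2$. This dichotomy drives both directions of the argument.

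For the forward direction, fix a $\gamma_w(G)$-function $f$ of weight $2$. In case (a), the constraint at $u\in V_2$ reads $0=f(N(u))\ge w_2$, forcing $w_2=0$; and for every $v\in V_0$ we have $f(N(v))=f(u)[u\in N(v)]\in\{0,2\}$, so the demand $f(N(v))\ge w_0\ge 1$ makes $u$ a universal vertex (so $\gamma(G)=1$) and gives $w_0\le 2$. If $w_0=2$ we land in the branch ``$w_0=2$'' of (i); if $w_0=1$, then $w_1=0$ would already allow the weight-$1$ function $f'(u)=1$, contradicting $\gamma_w(G)=2$, so $w_1\ge 1$, and monotonicity forces $w_0=w_1=1$, which is the other branch of (i). In case (b), each $u_i\in V_1$ satisfies $f(N(u_i))=[u_1u_2\in E(G)]\le 1$, forcing $w_1\le 1$ and (if $w_1=1$) the edge $u_1u_2$; each $v\in V_0$ satisfies $f(N(v))=|\{u_1,u_2\}\cap N(v)|\le 2$, forcing $w_0\le 2$. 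Inspecting the four feasible pairs $(w_0,w_1)\in\{(1,0),(1,1),(2,0),(2,1)\}$ turns $\{u_1,u_2\}$ respectively into a dominating, total dominating, $2$-dominating, or double dominating set, whose size equals the corresponding parameter; the value $1$ of that parameter is excluded either automatically (since $\gamma_t(G),\gamma_2(G),\gamma_{\times 2}(G)\ge 2$ whenever $|V(G)|\ge 2$) or because it would force $\gamma_w(G)=1$. This exhausts (ii)-(v); the remaining pair $(2,2)$ is infeasible because it would require $f(u_2)\ge 2$.

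For the converse, assume one of (i)-(v) holds and exhibit an explicit witness of weight $2$. Under (i), take $f(u)=2$ at a universal vertex when $w_0=2$, and $f(u)=f(v)=1$ at a universal $u$ and any $v\in N(u)$ when $w_0=w_1=1$; under (ii)-(v), take $f(u_1)=f(u_2)=1$ where $\{u_1,u_2\}$ is a set realizing $\gamma(G)=2$, $\gamma_t(G)=2$, $\gamma_2(G)=2$, or $\gamma_{\times 2}(G)=2$, respectively. In each case, checking $f(N(v))\ge w_i$ for every $v\in V_i$ reduces to the defining property of the witness set combined with the monotonicity $w_0\ge w_1\ge\cdots\ge w_l$. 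The matching lower bound $\gamma_w(G)\ge 2$ is uniform: a weight-$1$ $w$-dominating function $f(u)=1$ would require simultaneously $w_1=0$ (from $u\in V_1$), $w_0=1$ (since $f(N(v))\le 1$ for $v\in V_0$), and $u$ universal in $G$, and each of the five hypotheses excludes at least one of these three conditions.

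The main obstacle I anticipate is purely organizational: keeping the $(w_0,w_1)$ subcases cleanly separated, invoking monotonicity at the right moment to close each branch of case (a), and exploiting the hypothesis $|V(G)|\ge 3$ wherever a generic $v\in V_0$ is needed (otherwise $V_0$-constraints could be vacuous and degenerate cases would appear). No substantial new idea beyond the structural dichotomy (a)/(b) is required.
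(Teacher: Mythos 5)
Your proof is correct and takes essentially the same route as the paper's: both classify weight-$2$ functions by the dichotomy $|V_2|=1$ versus $V_1=\{u_1,u_2\}$, extract $w_2=0$ and $\gamma(G)=1$ in the first case, and match the feasible pairs $(w_0,w_1)\in\{(1,0),(1,1),(2,0),(2,1)\}$ in the second case to $\gamma(G)$, $\gamma_t(G)$, $\gamma_2(G)$ and $\gamma_{\times 2}(G)$, respectively. Your write-up is in fact slightly more complete than the paper's, since you make explicit the converse witnesses and the uniform lower bound $\gamma_w(G)\ge 2$ (ruling out a weight-$1$ function), which the paper dismisses with ``it is easy to check.''
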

 
\begin{proof}
Assume first that $\gamma_{(w_0,\dots ,w_l)}(G)=2$ and let $f(V_0,\dots ,V_l)$ be a $\gamma_{(w_0,\dots ,w_l)}(G)$-function. Notice that $w_0\in \{1,2\}$ and  $|V_2|\in \{0,1\}$.  
If $|V_2|=1$, then $w_2=0$ and $V_i=\varnothing$ for every $i\ne 0,2$.  Hence, $\gamma(G)=1$ and either $w_0=2$ or $w_0=w_1=1$. Therefore,  (i) follows.

Now we consider the case $V_2=\varnothing$. Notice that $V_1$ is a dominating set of cardinality two,  $w_1\in \{0,1\}$  and $V_i=\varnothing$ for every $i\ne 0,1$.

Assume first that $w_0=1$ and $w_1=0$. If $\gamma(G)=1$, then $\gamma_{(w_0,\dots ,w_l)}(G)=1$, which is a contradiction. Hence, $\gamma(G)=2$ and so (ii) follows.
For $w_0+w_1\ge 2$ we have the following possibilities.

If $w_0=w_1=1$, then $V_1$ is a total dominating set of cardinality two, and so $\gamma_t(G)=2$. Therefore, (iii) follows.

If $w_0=2$ and $w_1=0$, then $V_1$ is a $2$-dominating set of cardinality two, which implies that $\gamma_2(G)=2$. Therefore, (iv) follows.

If $w_0=2$ and $w_1=1$, 
then $V_1$ is a double dominating set of cardinality two, and this implies that 
$\gamma_{\times 2}(G)=2$. Therefore, (v) follows.

Conversely, if one of the five conditions holds, then it is easy to check that $\gamma_{(w_0,\dots ,w_l)}(G)=2$, which completes the proof. 
\end{proof}

In order to establish the following result,  we need to define the following parameter. 
 $$\nu_{(w_0, \dots ,w_l)}(G)=\max \{|V_0|:\, f(V_0,\dots, V_l) \text{ is a } \gamma_{(w_0,\dots ,w_l)}(G)\text{-function}.\}$$
 In particular, for $l=1$ and a graph  $G$ of  order $n$, we have that  $\nu_{(w_0, w_1)}(G)=n-\gamma_{(w_0,w_1)}(G)$.

\begin{theorem}\label{GeneralUpperBounds(2,2,2)}
Let $G$ be a graph  of minimum degree $\delta$ and order $n$. The following statements hold for any  $(w_0,\dots ,w_l)\in \mathbb{Z}^+\times \mathbb{N}^l$ with $w_0\ge \cdots \ge w_l$.
\begin{enumerate}
\item[{\rm (i)}] If there exists $i\in \{1,\dots, l-1\}$ such that $i\delta\geq w_i$,  then
$$\gamma_{(w_0,\dots ,w_l)}(G)\leq \gamma_{(w_0,\dots ,w_i)}(G).$$ 
\item[{\rm (ii)}] If $l\ge i+1\ge w_0$, then $$\gamma_{(w_0,\dots ,w_i,0,\dots,0)}(G)\le (i+1)\gamma(G).$$
\item[{\rm (iii)}] Let $k,i\in \mathbb{Z}^+$ such that $l\ge ki$, 
and let $(w'_0, w_1',\dots ,w_i')\in \mathbb{Z}^+\times \mathbb{N}^l$. 
 If   $i\delta\geq w_i'$ and $w_{kj}=kw_{j}'$ for every $j\in \{0,1,\dots, i\}$, then $$\gamma_{(w_0,\dots ,w_l )}(G)\le k \gamma_{(w'_0,\dots ,w'_i)}(G).$$

\item[{\rm (iv)}]  Let $k\in \mathbb{Z}^+$ and $\beta_1, \dots ,\beta_k\in \mathbb{Z}^+$. If $l\delta\geq k+w_l>k$ and $w_0+k\ge \beta_1\ge  \cdots \ge \beta_k\ge w_1+k$,
then  $$\gamma_{(w_0+k,\beta_1, \dots ,\beta_k,w_1+k,\dots ,w_l+k)}(G)\le \gamma_{(w_0,\dots ,w_l)}(G)+k(n-\nu_{(w_0, \dots ,w_l)}(G)).$$

\item[{\rm (v)}] If $l\delta\ge w_l\ge l\ge 2$, then $$\gamma_{(w_0,\dots ,w_l)}(G)\leq l \gamma_{(w_0-l+1,w_l-l+1)}(G).$$

\item[{\rm (vi)}] If  $\delta\ge 1$, $w_0\le l-1$ and $w_{l-1}\ge 1$, then   $$\gamma_{(w_0,\dots,w_{l-2},1)}(G)\le\gamma_{(w_0,\dots,w_{l-1},0)}(G).$$
\end{enumerate} 
\end{theorem}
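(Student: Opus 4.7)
The plan is, given a $\gamma_{(w_0,\dots,w_{l-1},0)}(G)$-function $f(V_0,\dots,V_l)$, to construct a $(w_0,\dots,w_{l-2},1)$-dominating function $g:V(G)\to\{0,1,\dots,l-1\}$ with $\omega(g)\le\omega(f)$. The first move is to cap values at $l-1$: set $g(v)=l-1$ for every $v\in V_l$ and $g(v)=f(v)$ otherwise. For any $v\in V(G)$ this gives $g(N(v))=f(N(v))-|N(v)\cap V_l|$, and since each vertex of $V_l$ contributes $l$ to $f(N(v))$, we obtain $g(N(v))\ge (l-1)\,|N(v)\cap V_l|$. The key hypothesis $w_0\le l-1$ then ensures that whenever $N(v)\cap V_l\ne\varnothing$ the required constraint on $g(N(v))$ is automatically met.

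For vertices $v$ with $N(v)\cap V_l=\varnothing$ the identity $g(N(v))=f(N(v))$ makes the constraint inherited from $f$ sufficient; when $v\in V_{l-1}$ (whose required threshold drops from $w_{l-1}$ to $1$ in the new function), the hypothesis $w_{l-1}\ge 1$ is precisely what is needed. The only genuine obstruction comes from vertices $u\in V_l$ with $f(N(u))=0$, i.e., those whose entire neighbourhood lies in $V_0$: after capping they satisfy $g(N(u))=0$, violating the new constraint $g(N(u))\ge 1$. I would repair this by defining $U=\{u\in V_l:f(N(u))=0\}$ and, for each $u\in U$, choosing a neighbour $\phi(u)\in N(u)\cap V_0$, which exists thanks to $\delta\ge 1$, then updating $g(\phi(u)):=1$.

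Writing $T=\phi(U)$, we have $|T|\le|U|\le|V_l|$, so $\omega(g)=\omega(f)-|V_l|+|T|\le\omega(f)$, which yields the desired inequality once $g$ is verified to be $(w_0,\dots,w_{l-2},1)$-dominating. The verification is a short case split: vertices of $T$ are fine because each $\phi(u)\in T$ is adjacent to $u\in V_l$, contributing $l-1\ge w_0$ to the relevant neighbourhood sum; vertices of $V_l$ are now handled either by $\phi(u)\in T$ (when $u\in U$) or by a neighbour with already-positive $f$-value (when $u\notin U$); and the remaining vertices were dealt with above. The main subtlety, and the reason both hypotheses $w_0\le l-1$ and $w_{l-1}\ge 1$ are needed, is the interplay that makes the capping operation harmless except at the local defects in $U$, which can then be patched at a cost of at most $|V_l|$.
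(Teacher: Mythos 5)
Your proposal proves only part (vi) of the theorem: everything you construct concerns passing from a $\gamma_{(w_0,\dots,w_{l-1},0)}(G)$-function to a $(w_0,\dots,w_{l-2},1)$-dominating function. Parts (i)--(v) are never addressed, and none of them follows from your construction; each needs its own (short, but separate) argument. In the paper, (i) pads a $\gamma_{(w_0,\dots,w_i)}(G)$-function with empty classes $V_{i+1}=\cdots=V_l=\varnothing$, the hypothesis $i\delta\ge w_i$ guaranteeing such a function exists; (ii) assigns weight $i+1$ to every vertex of a $\gamma(G)$-set; (iii) multiplies a $\gamma_{(w'_0,\dots,w'_i)}(G)$-function by $k$; (iv) adds $k$ to every vertex of positive weight in a $\gamma_{(w_0,\dots,w_l)}(G)$-function that maximizes $|V_0|$, which is where the parameter $\nu_{(w_0,\dots,w_l)}(G)$ enters; and (v) is (iv) specialized to $k=l-1$. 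As a proof of the theorem as stated, this is a substantial gap: five of the six claims are unproven.

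Your argument for (vi) itself is correct, and it is in fact a mild simplification of the paper's. The paper first proves, by an exchange argument using minimality of $f$, that $f(N(v))\le l-2$ for every $v\in V_l$, and then increments one chosen neighbour of \emph{every} vertex of $V_l$; that preliminary claim is exactly what keeps the incremented values inside $\{0,\dots,l-1\}$. You instead observe that after capping $V_l$ at $l-1$, any vertex with a neighbour in $V_l$ automatically satisfies $g(N(v))\ge l-1\ge w_0$, so the only violations occur at vertices $u\in V_l$ with $f(N(u))=0$, whose neighbourhoods lie entirely in $V_0$; patching those with value $1$ stays in range for free, costs $|T|\le |U|\le |V_l|$, and your case check is complete (in particular $g(N(\phi(u)))\ge l-1\ge w_1$ for the patched vertices, and $w_{l-2}\ge w_{l-1}\ge 1$ keeps the target vector monotone, so the notation is legitimate). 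This route avoids the paper's exchange step entirely, which is a genuine improvement in economy --- but it covers only one sixth of the statement, so you still owe proofs of (i)--(v).
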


\begin{proof}
If there exists $i\in \{1,\dots, l-1\}$ such that $i\delta\geq w_i$, then   for any $\gamma_{(w_0,\dots ,w_i)}(G)$-function $f(V_0,\dots, V_i)$ we define a $(w_0,\dots ,w_l)$-dominating function $g(W_0,\dots , W_l)$ by $W_j=V_j$ for every $j\in \{0,\dots , i\}$ and $W_j=\varnothing$ for every $j\in \{i+1,\dots , l\}$. Hence, 
$\gamma_{(w_0,\dots ,w_l)}(G)\le \omega(g)=\omega(f)=\gamma_{(w_0,\dots ,w_i)}(G)$.  Therefore, (i) follows.

Now, assume $l\ge i+1\ge w_0$. Let $S$ be a $\gamma(G)$-set. Let $f$ be the  function  defined by $f(v)=i+1$ for every $v\in S$ and $f(v)=0$ for the remaining vertices. Since $f$ is 
 a  $(w_0,\dots ,w_i, 0\dots, 0)$-dominating   function, we conclude that 
$\gamma_{(w_0,\dots ,w_i, 0\dots, 0)}(G)\le \omega(f)=(i+1)|S|=(i+1)\gamma(G)$, which implies that (ii) follows.

In order to prove (iii), assume that $l\ge ki$, $i\delta\geq w_i'$ and $w_{kj}=kw'_j$ for every $j\in \{0,\dots, i\}$. Let $f'(V_0',\dots, V_i')$ be a $\gamma_{(w_0',\dots ,w_i')}(G)$-function. We construct a function $f(V_0,\dots, V_l)$ as $f(v)=kf'(v)$ for every $v\in V(G)$. Hence, $V_{kj}=V_j'$ for every $j\in \{0,\dots, i\}$, while $V_j=\varnothing$ for the remaining cases. Thus,  
for every $v\in V_{kj}$ with $j\in \{0,\dots, i\}$ we have that $f(N(v))=kf'(N(v))\ge kw_j'= w_{kj}$, which implies that $f$ is a  $(w_0,\dots ,w_l)$-dominating function, and so $\gamma_{(w_0,\dots ,w_l)}(G)\le \omega(f)=k\omega(f')=k\gamma_{(w_0',\dots ,w_i')}(G)$. Therefore,   (iii) follows.

Now, assume that $l\delta\geq k+w_l>k$ and $w_0+k\ge \beta_1\ge  \cdots \ge \beta_k\ge w_1+k$.
Let $g(W_0,\dots, W_l)$ be a $\gamma_{(w_0,\dots ,w_l)}(G)$-function. We construct a function $f(V_{0},\dots, V_{l+k})$ as $f(v)=g(v)+k$ for every $v\in V(G)\setminus W_0$ and $f(v)=0$ for every  $v\in W_0$. Hence, $V_{j+k}=W_j$ for every $j\in \{1,\dots, l\}$, $V_0=W_0$ and $V_j=\varnothing$ for the remaining cases. Thus,  
if $v\in V_{j+k}$ and $j\in \{1,\dots, l\}$, then  $f(N(v))\ge g(N(v))+k\ge w_j+k$,  and if $v\in V_{0}$, then  $f(N(v))\ge g(N(v))+k\ge w_0+k$. This implies that $f$ is a  $(w_0+k,\beta_1, \dots ,\beta_k,w_1+k,\dots ,w_l+k)$-dominating function, and so $\gamma_{(w_0+k,\beta_1, \dots ,\beta_k,w_1+k,\dots ,w_l+k)}(G)\le \omega(f)=\omega(g)+ k\sum_{j=1}^l|W_j|=\gamma_{(w_0,\dots ,w_l)}(G)+k(n-|W_0|)\le \gamma_{(w_0,\dots ,w_l)}(G)+k(n-\nu_{(w_0, \dots ,w_l)}(G))$. Therefore,   (iv) follows. 

Furthermore, if $l\delta\ge w_l\ge l\ge 2$, then by applying (iv) for $k=l-1$, we deduce that 
$$\gamma_{(w_0,\dots ,w_l)}(G)\leq  \gamma_{(w_0-l+1,w_l-l+1)}(G)+(l-1)(n-\nu_{(w_0-l+1,w_l-l+1)}(G))=l\gamma_{(w_0-l+1,w_l-l+1)}(G).$$ 
Therefore,   (v) follows.

From now on, let $\delta\ge 1$, $w_0\le l-1$ and $w_{l-1}\ge 1$. Let $f(V_0,\ldots, V_l)$ be a $\gamma_{(w_0,\dots,w_{l-1},0)}(G)$-function. Assume first $V_l=\varnothing  $. Since $w_{l-1}\ge 1$,  we have that $f$ is a  $(w_0,\dots,w_{l-2},1)$-dominating function on $G$, which implies that  (vi) follows.
Assume now that there exists $v\in V_l$. If $f(N(v))\ge l-1$, then 
 the function $f'$, defined by $f'(v)=l-1$ and  $f'(x)=f(x)$ for every $x\in V(G)\setminus\{v\}$, is a  $(w_0,\dots,w_{l-1},0)$-dominating function with $\omega(f')<\omega(f)$, which is a contradiction. Hence, $f(N(v))\le l-2$ for every $v\in V_l$.  
 Since  $\delta\ge 1$, for each vertex $x\in V_l$, we fix one vertex $x'\in N(x)$ and we form a set $S$ from them such that $|S|\le |V_l|$. Let $g$ be the function  defined by $g(x)=f(x)+1$ for any $x\in S$, $g(y)=l-1$ for any $y\in V_l$, and $g(z)=f(z)$ for the remaining vertices of $G$. Since $g(N(x))\ge l-1\ge w_i$ for every $x\in S$ and $i\in \{0,\dots,l-2\}$, $g(N(y))\ge 1$ for every $y\in V_{l-1}\cup V_l$, and $g(N(z))\ge w_i$ for 
 every $z\in V_i\setminus (S\cup V_{l-1}\cup V_l)$ and $i\in \{0,\dots,l-2\}$, we conclude that $g$ 
  is a  $(w_0,\dots,w_{l-2},1)$-dominating function on $G$. Therefore, $\gamma_{(w_0,\dots,w_{l-2},1)}(G)\le\omega(g)\le\omega(f)=\gamma_{(w_0,\dots,w_{l-1},0)}(G)$, which completes the proof of  (vi).  
\end{proof}


In the next subsections we consider several applications of Theorem \ref{GeneralUpperBounds(2,2,2)} where we show that the bounds are tight.
For instance, the following particular cases will be of interest. 


\begin{corollary}\label{CorollaryGeneralUpperBounds}
Let $G$ be a graph of minimum degree $\delta$, and let $k,l,w_2,\dots ,w_l\in \mathbb{Z}^+$ with $k\ge w_2 \ge \cdots \ge w_l$.
\begin{enumerate}

\item[{\rm (i)}]  If $\delta \ge k$ and $w=(k+1,k , w_2,\dots ,w_l)$, then $\gamma_{w}(G)\le \gamma_{\times k}(G)$.

\item[{\rm (ii)}]  If $\delta \ge k$ and $w=(k, k, w_2,\dots ,w_l)$, then $\gamma_{w}(G)\le \gamma_{\times k,t}(G)$.

\item[{\rm (iii)}]  If $l\delta \ge k\ge l\ge 2$ and $w= ({\scriptsize  \underbrace{k+1,k, \dots ,k}_{l+1}})$, then $\gamma_{w}(G)\le l\gamma_{\times (k-l+2)}(G)$.

\item[{\rm (iv)}]  If $l\delta \ge k\ge l\ge 2$ and $w=({\scriptsize \underbrace{k,k, \dots ,k}_{l+1}})$, then $\gamma_{w}(G)\le l\gamma_{\times (k-l+1),t}(G)$.

\item[{\rm (v)}]  If $l\ge k$,  $\delta\ge 1$ and $w=({\scriptsize\underbrace{k, \dots ,k}_{l+1}})$, then $\gamma_{w}(G)\le k\gamma_{t}(G)$.
\end{enumerate} 
\end{corollary}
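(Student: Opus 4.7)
The plan is to derive each of (i)--(v) as a direct specialization of Theorem \ref{GeneralUpperBounds(2,2,2)}, combined with the identifications $\gamma_{\times k}(G)=\gamma_{(k,k-1)}(G)$, $\gamma_{\times k,t}(G)=\gamma_{(k,k)}(G)$ and $\gamma_{t}(G)=\gamma_{(1,1)}(G)$ recorded in Section~\ref{NewDomination}. In every case it suffices to verify the appropriate minimum-degree hypothesis and then match indices so that the right-hand side rewrites as the named classical parameter.

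For items (i) and (ii) I would apply part (i) of Theorem \ref{GeneralUpperBounds(2,2,2)} with $i=1$. The hypothesis $i\delta\ge w_i$ becomes $\delta\ge k$, which is given in both cases. That part of the theorem produces $\gamma_{w}(G)\le \gamma_{(w_0,w_1)}(G)$. Substituting $(w_0,w_1)=(k+1,k)$ identifies the right-hand side with $\gamma_{\times (k+1)}(G)$ and gives (i), while substituting $(w_0,w_1)=(k,k)$ identifies it with $\gamma_{\times k,t}(G)$ and gives (ii).

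For items (iii) and (iv) I would apply part (v) of Theorem \ref{GeneralUpperBounds(2,2,2)}, whose hypothesis $l\delta\ge w_l\ge l\ge 2$ coincides with the stated $l\delta\ge k\ge l\ge 2$ after substituting $w_l=k$. The conclusion $\gamma_{w}(G)\le l\,\gamma_{(w_0-l+1,\,w_l-l+1)}(G)$ then becomes $l\,\gamma_{(k-l+2,\,k-l+1)}(G)=l\,\gamma_{\times (k-l+2)}(G)$ when $w_0=k+1$, which is (iii); and $l\,\gamma_{(k-l+1,\,k-l+1)}(G)=l\,\gamma_{\times (k-l+1),t}(G)$ when $w_0=k$, which is (iv).

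For item (v) I would invoke part (iii) of Theorem \ref{GeneralUpperBounds(2,2,2)} with base vector $(w_0',w_1')=(1,1)$, with $i=1$, and with the multiplier ``$k$'' of that theorem taken to be our $k$. The three required conditions $l\ge k\cdot i=k$, $i\delta=\delta\ge w_1'=1$, and $w_{kj}=kw_j'$ for $j\in\{0,1\}$ (that is, $w_0=w_k=k$) are immediate because $w=(k,\dots,k)$ is constant of length $l+1\ge k+1$. The conclusion then reads $\gamma_{w}(G)\le k\,\gamma_{(1,1)}(G)=k\,\gamma_{t}(G)$, as claimed. There is no real obstacle here; the whole corollary is a bookkeeping exercise on top of Theorem \ref{GeneralUpperBounds(2,2,2)}. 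The one check worth pausing over is in (v): the lifted function $kf'$ constructed inside the proof of Theorem \ref{GeneralUpperBounds(2,2,2)}(iii) must take values in $\{0,\dots,l\}$, which holds because $kf'(v)\in\{0,k\}\subseteq\{0,\dots,l\}$ thanks to the hypothesis $l\ge k$.
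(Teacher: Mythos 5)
Your proposal follows the paper's own proof essentially verbatim: parts (i) and (ii) via Theorem \ref{GeneralUpperBounds(2,2,2)} (i) with $i=1$ (the hypothesis $\delta\ge k$ being exactly $i\delta\ge w_1$), parts (iii) and (iv) via Theorem \ref{GeneralUpperBounds(2,2,2)} (v) with $w_l=k$, and part (v) via Theorem \ref{GeneralUpperBounds(2,2,2)} (iii) with $(w_0',w_1')=(1,1)$, $i=1$ and multiplier $k$. Your closing check that the lifted function $kf'$ stays within $\{0,\dots,l\}$ because $l\ge ki$ is a correct (and welcome) addition that the paper leaves implicit. Incidentally, in (iv) you state the identity $\gamma_{(k-l+1,k-l+1)}(G)=\gamma_{\times (k-l+1),t}(G)$ correctly, whereas the paper's proof misprints it with an extra factor $l$ on the right-hand side.

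One caveat concerning (i): what your chain actually establishes is $\gamma_{w}(G)\le\gamma_{(k+1,k)}(G)=\gamma_{\times (k+1)}(G)$, and since $\gamma_{\times (k+1)}(G)\ge\gamma_{\times k}(G)$ this does \emph{not} yield the printed inequality $\gamma_{w}(G)\le\gamma_{\times k}(G)$; your phrase ``identifies the right-hand side with $\gamma_{\times (k+1)}(G)$ and gives (i)'' glosses over the mismatch. In fact the printed statement of (i) is false as written: for $G=K_3$, $k=2$, $l=2$ and $w=(3,2,1)$ one checks $\gamma_{w}(K_3)=3>2=\gamma_{\times 2}(K_3)$, since any function of weight $2$ leaves a vertex $v$ with $f(v)=0$ and $f(N(v))=2<3=w_0$. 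The paper's own one-line proof of (i) has the identical defect, so the fault lies in the statement rather than in your reasoning: consistent with the definition $\gamma_{\times k}(G)=\gamma_{(k,k-1)}(G)$ and with part (iii), whose bound $l\gamma_{\times (k-l+2)}(G)$ formally reduces to $\gamma_{\times (k+1)}(G)$ at $l=1$, the conclusion of (i) should read $\gamma_{w}(G)\le\gamma_{\times (k+1)}(G)$, which is exactly what you (and the paper) prove.
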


\begin{proof}
If $\delta \ge k$, then by Theorem \ref{GeneralUpperBounds(2,2,2)} (i) we conclude that (i) and (ii) follows.

If $l\delta \ge k\ge l\ge 2$, then by Theorem \ref{GeneralUpperBounds(2,2,2)} (v) we deduce  that
$$\gamma_{({\scriptsize \underbrace{k+1,k, \dots ,k}_{l+1}})}(G)\le l 
\gamma_{(k-l+2,k-l+1)}(G)=
l\gamma_{\times (k-l+2)}(G).$$
 Hence, (iii) follows. By analogy we derive (iv), as $\gamma_{(k-l+1,k-l+1)}(G)=
l\gamma_{\times (k-l+1),t}(G)$.

Finally, if $l\ge k$ and $\delta\ge 1$, then 
by Theorem \ref{GeneralUpperBounds(2,2,2)} (iii) we deduce  that
$$\gamma_{({\scriptsize \underbrace{k, \dots ,k}_{l+1}})}(G)\le k\gamma_{(1,1)}(G)=k\gamma_{t}(G).$$
 Therefore, (v) follows. 
\end{proof}

\subsection{Preliminary results on $(2,2,2)$-domination}
\label{SubSection(2,2,2)}

\begin{theorem}\label{general_bound(2,2,2)}
For any graph $G$ with no isolated vertex, order $n$ and maximum degree $\Delta$,
$$\left\lceil  \frac{2n}{\Delta} \right\rceil\le \gamma_{(2,2,2)}(G)\le 2\gamma_t(G).$$
Furthermore, if $G$ has minimum degree $\delta\geq 2$, then
$$\gamma_{(2,2,2)}(G)\leq \gamma_{\times 2,t}(G).$$
\end{theorem}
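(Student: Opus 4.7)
The plan is to establish the three inequalities independently, each as a short consequence of either a previous corollary or an explicit construction of a $(2,2,2)$-dominating function from a well-chosen set of $G$. All three are reductions to facts already in place, so no new combinatorial argument is needed.

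For the lower bound $\lceil 2n/\Delta\rceil \le \gamma_{(2,2,2)}(G)$, I would simply invoke Corollary \ref{CorollaryLowerBounds}(ii) with $k=2$ and $l=2$. The hypothesis $k\le l\delta$ becomes $\delta\ge 1$, which is guaranteed by the assumption that $G$ has no isolated vertex, and the weight vector $w=(2,2,2)$ matches the required form $(\underbrace{k,\dots,k}_{l+1})$.

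For $\gamma_{(2,2,2)}(G)\le 2\gamma_t(G)$, I would take a $\gamma_t(G)$-set $S$ and define $f(v)=2$ if $v\in S$ and $f(v)=0$ otherwise. Since $S$ is total dominating, every vertex of $G$ has at least one neighbour in $S$, hence $f(N(v))\ge 2$ for every $v\in V(G)$, so $f$ is a $(2,2,2)$-dominating function of weight $2|S|=2\gamma_t(G)$. Equivalently, this is Corollary \ref{CorollaryGeneralUpperBounds}(v) with $k=l=2$.

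For the last bound $\gamma_{(2,2,2)}(G)\le \gamma_{\times 2,t}(G)$, which requires $\delta\ge 2$ precisely so that $\gamma_{\times 2,t}(G)$ is defined, I would take a $\gamma_{\times 2,t}(G)$-set $S$ and set $f(v)=1$ if $v\in S$ and $f(v)=0$ otherwise. By the defining property of a double total dominating set, $|N(v)\cap S|\ge 2$ for every vertex $v$, and therefore $f(N(v))=|N(v)\cap S|\ge 2$ for all $v\in V(G)$. Thus $f$ is a $(2,2,2)$-dominating function of weight $|S|=\gamma_{\times 2,t}(G)$. This is also subsumed by Corollary \ref{CorollaryGeneralUpperBounds}(ii) with $k=2$. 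There is no genuine obstacle in this theorem; the only care needed is to verify that the degree hypotheses in the cited corollaries are satisfied under the stated assumptions, which is immediate in each case.
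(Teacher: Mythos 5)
Your proposal is correct and follows essentially the same route as the paper, which likewise derives the lower bound from Corollary \ref{CorollaryLowerBounds}, the bound $\gamma_{(2,2,2)}(G)\le 2\gamma_t(G)$ from Corollary \ref{CorollaryGeneralUpperBounds} (v), and the bound $\gamma_{(2,2,2)}(G)\le \gamma_{\times 2,t}(G)$ from Corollary \ref{CorollaryGeneralUpperBounds} (ii). Your explicit constructions (weight $2$ on a $\gamma_t(G)$-set, weight $1$ on a $\gamma_{\times 2,t}(G)$-set) are simply the unwrapped content of those corollaries, and your verification of the degree hypotheses is accurate.
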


\begin{proof}
From Corollary \ref{CorollaryLowerBounds} we deduce   the lower bound. 
The upper bound $\gamma_{(2,2,2)}(G)\le 2\gamma_t(G)$  follows by  Corollary \ref{CorollaryGeneralUpperBounds} (v), while, if $\delta\ge 2$, then we apply Corollary \ref{CorollaryGeneralUpperBounds} (ii) to deduce that  $\gamma_{(2,2,2)}(G)\le \gamma_{\times 2,t}(G)$. Therefore, the result follows.
\end{proof}


The bounds above are tight. For instance, for the graphs $G_2$ and $G_3$ shown in   Figure~\ref{ZZZ} we have that $\left\lceil  \frac{2n }{\Delta } \right\rceil= \gamma_{(2,2,2)}(G_2)=\gamma_{\times 2,t}(G_2)=3$ and $\gamma_{(2,2,2)}(G_3)= 2\gamma_t(G_3)=8.$ Notice that every graph $G_{k,r}$ belonging to the infinite family $\mathcal{H}_k$ constructed after Remark \ref{REmarkMonotonicityPart} satisfies the equality  $\gamma_{(2,2,2)}(G_{k,r})=\gamma_{\times 2,t}(G_{k,r})=k$.
Furthermore, from Theorem \ref{IgualdadCorona}  we have that for any corona graph $G\cong G_1\odot G_2$, where $G_1$ does not have isolated vertices, $\gamma_{(2,2,2)}(G)=2\gamma(G)=2\gamma_t(G)$.


Notice that by theorem \ref{general_bound(2,2,2)} we have that $ \gamma_{(2,2,2)}(G)\ge \left\lceil  \frac{2n}{\Delta} \right\rceil\ge 3$ for every graph $G$ with no isolated vertex. Next we characterize all graphs with $\gamma_{(2,2,2)}(G)=3$. To this end, we need to establish the following lemma.

\begin{lemma}\label{lem-small-values-(2,2,2)}
For a graph $G$, the following statements are equivalent.

\begin{enumerate} 
\item[{\rm (i)}] $\gamma_{(2,2,2)}(G)=\gamma_{\times 2,t}(G)$.
\item[{\rm (ii)}] There exists a $\gamma_{(2,2,2)}(G)$-function $f(V_0,V_1,V_2)$ such that $V_2=\varnothing$.
\end{enumerate}
\end{lemma}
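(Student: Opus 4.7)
The plan is to exploit the direct correspondence between double total dominating sets of $G$ and $(2,2,2)$-dominating functions whose support lies entirely in $V_1$. Since $\gamma_{\times 2,t}(G)$ is only defined when $\delta \ge 2$, this hypothesis is implicit throughout, and we already have $\gamma_{(2,2,2)}(G)\le \gamma_{\times 2,t}(G)$ from Theorem \ref{general_bound(2,2,2)}.

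First I would handle the implication (i)$\Rightarrow$(ii). Let $D$ be a $\gamma_{\times 2,t}(G)$-set and define $f$ by $f(v)=1$ if $v\in D$ and $f(v)=0$ otherwise. Because every vertex of $G$ has at least two neighbours in $D$, we have $f(N(v))\ge 2$ for every $v\in V(G)$, so $f$ is a $(2,2,2)$-dominating function with $\omega(f)=|D|=\gamma_{\times 2,t}(G)=\gamma_{(2,2,2)}(G)$. Hence $f$ is a $\gamma_{(2,2,2)}(G)$-function satisfying $V_2=\varnothing$.

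Next, for (ii)$\Rightarrow$(i), suppose $f(V_0,V_1,V_2)$ is a $\gamma_{(2,2,2)}(G)$-function with $V_2=\varnothing$. Then $f$ takes only the values $0$ and $1$, and the condition $f(N(v))\ge 2$ for every $v\in V(G)$ is equivalent to $|N(v)\cap V_1|\ge 2$ for every $v\in V(G)$. Therefore $V_1$ is a double total dominating set, giving $\gamma_{\times 2,t}(G)\le |V_1|=\omega(f)=\gamma_{(2,2,2)}(G)$. Combined with the reverse inequality from Theorem \ref{general_bound(2,2,2)}, we conclude $\gamma_{(2,2,2)}(G)=\gamma_{\times 2,t}(G)$.

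I do not anticipate any real obstacle here: once one notices that characteristic functions of double total dominating sets are exactly the $(2,2,2)$-dominating functions of the form $f(V_0,V_1,\varnothing)$, both implications reduce to one line each, and the only external ingredient needed is the already-established upper bound $\gamma_{(2,2,2)}(G)\le \gamma_{\times 2,t}(G)$.
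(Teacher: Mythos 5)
Your proposal is correct and follows essentially the same route as the paper's own proof: for (i)$\Rightarrow$(ii) both take the characteristic function of a $\gamma_{\times 2,t}(G)$-set, and for (ii)$\Rightarrow$(i) both observe that $V_1$ is a double total dominating set and invoke the upper bound $\gamma_{(2,2,2)}(G)\le \gamma_{\times 2,t}(G)$ of Theorem \ref{general_bound(2,2,2)}. Your explicit remark that $\delta\ge 2$ is implicit (and, in direction (ii)$\Rightarrow$(i), forced by every vertex needing two neighbours in $V_1$) is a harmless addition; no changes are needed.
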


\begin{proof}  
If $\gamma_{(2,2,2)}(G)=\gamma_{\times 2,t}(G)$, then 
for any $\gamma_{\times 2,t}(G)$-set $D$,  the function $g(W_0,W_1,W_2)$, defined by $W_1=D$ and $W_0=V(G)\setminus D$, is a $\gamma_{(2,2,2)}(G)$-function. Therefore, (ii) follows.

Conversely, if there exists a $\gamma_{(2,2,2)}(G)$-function $f(V_0,V_1,V_2)$ such that $V_2=\varnothing$, then $V_1$ is a double total dominating set of $G$, and so $\gamma_{\times 2,t}(G)\leq |V_1|=\omega(f)=\gamma_{(2,2,2)}(G)$. Therefore, Theorem \ref{general_bound(2,2,2)} leads to  $\gamma_{(2,2,2)}(G)=\gamma_{\times 2,t}(G)$.
\end{proof}

\begin{theorem}\label{teo-case=3-(2,2,2)}
For a graph $G$,  the following statements are equivalent.
\begin{enumerate} 
\item[{\rm (i)}] $\gamma_{(2,2,2)}(G)=3$.
\item[{\rm (ii)}] $\gamma_{\times 2,t}(G)=3$.
\end{enumerate}
\end{theorem}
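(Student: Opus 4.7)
The plan is to dispose of both directions with a short case analysis, leveraging Lemma~\ref{lem-small-values-(2,2,2)} to convert between $\gamma_{(2,2,2)}(G)$ and $\gamma_{\times 2,t}(G)$ whenever a $\gamma_{(2,2,2)}(G)$-function can be chosen with $V_2=\varnothing$.

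For the implication (ii)$\Rightarrow$(i), I would first note that $\gamma_{\times 2,t}(G)=3$ forces $\delta\ge 2$ (otherwise $\gamma_{\times 2,t}(G)$ is not even defined), so Theorem~\ref{general_bound(2,2,2)} delivers the upper bound $\gamma_{(2,2,2)}(G)\le\gamma_{\times 2,t}(G)=3$. The matching lower bound comes from the same theorem: $\gamma_{(2,2,2)}(G)\ge\lceil 2n/\Delta\rceil\ge\lceil 2n/(n-1)\rceil\ge 3$, using only $\Delta\le n-1$.

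For (i)$\Rightarrow$(ii), I would let $f(V_0,V_1,V_2)$ be a $\gamma_{(2,2,2)}(G)$-function of weight~$3$. Since $2|V_2|+|V_1|=3$, the pair $(|V_2|,|V_1|)$ must be either $(0,3)$ or $(1,1)$. The latter case can be eliminated at once: if $V_2=\{u\}$ and $V_1=\{v\}$, the requirement $f(N(u))\ge w_2=2$ cannot be met, because the only vertex other than $u$ with positive weight is $v$, contributing at most $1$. Hence $V_2=\varnothing$, and Lemma~\ref{lem-small-values-(2,2,2)} immediately yields $\gamma_{\times 2,t}(G)=\gamma_{(2,2,2)}(G)=3$.

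The essential work has already been carried out in Lemma~\ref{lem-small-values-(2,2,2)} and Theorem~\ref{general_bound(2,2,2)}, so there is no substantial obstacle here. What remains is only the enumeration of the two possible ways to distribute weight~$3$ between $V_1$ and $V_2$, together with the one-line verification that ruling out the degenerate $(|V_2|,|V_1|)=(1,1)$ configuration reduces the problem to the situation covered by the lemma.
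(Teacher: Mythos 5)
Your proposal is correct and follows essentially the same route as the paper: the paper also rules out $V_2\neq\varnothing$ in the direction (i)$\Rightarrow$(ii) (phrased as ``$u\in V_2$ and $f(N(u))\ge 2$ would force weight at least $4$,'' which is your $(|V_2|,|V_1|)=(1,1)$ elimination in different words), then invokes Lemma~\ref{lem-small-values-(2,2,2)}, and for (ii)$\Rightarrow$(i) it uses exactly the same squeeze $3\le\lceil 2n/\Delta\rceil\le\gamma_{(2,2,2)}(G)\le\gamma_{\times 2,t}(G)=3$ from Theorem~\ref{general_bound(2,2,2)}. No gaps; your explicit note that $\Delta\le n-1$ justifies the lower bound is a welcome touch the paper leaves implicit.
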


\begin{proof}
 Assume first that $\gamma_{(2,2,2)}(G)=3$, and let $f(V_0,V_1,V_2)$ be a $\gamma_{(2,2,2)}(G)$-function. Suppose that there exists $u\in V_2$. Since $f(N(u))\ge 2$, we deduce that $\gamma_{(2,2,2)}(G)\ge 4$, which is a contradiction. Hence, $V_2=\varnothing$ and  by Lemma \ref{lem-small-values-(2,2,2)} we conclude that  $\gamma_{\times 2,t}(G)=3$. 

Conversely, if $\gamma_{\times 2,t}(G)=3$, then $G$ has minimum degree $\delta\ge 2$ and so Theorem \ref{general_bound(2,2,2)} leads to $3\leq \left\lceil  \frac{2n}{\Delta} \right\rceil\leq \gamma_{(2,2,2)}(G)\leq \gamma_{\times 2,t}(G)=3$. Therefore, $\gamma_{(2,2,2)}(G)=3$. 
\end{proof}

Next we consider the case of graphs with $\gamma_{(2,2,2)}(G)=4$. 

\begin{theorem}\label{teo-case=4-(2,2,2)}
For a graph $G$,  $\gamma_{(2,2,2)}(G)=4$  if and only if at least 
one of the following conditions holds.
\begin{enumerate}
\item[{\rm (i)}] $\gamma_{\times 2,t}(G)=4$.
\item[{\rm (ii)}] $\gamma_t(G)=2$ and $G$ has minimum degree $\delta=1$. 
\item[{\rm (iii)}] $\gamma_t(G)=2$ and  $\gamma_{\times 2,t}(G)\geq 4$.
\end{enumerate}
\end{theorem}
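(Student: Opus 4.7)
The plan is to prove both implications by building directly on Lemma~\ref{lem-small-values-(2,2,2)}, Theorem~\ref{general_bound(2,2,2)}, and Theorem~\ref{teo-case=3-(2,2,2)}. For the necessity direction, assume $\gamma_{(2,2,2)}(G)=4$ and let $f(V_0,V_1,V_2)$ be a $\gamma_{(2,2,2)}(G)$-function. Since $\omega(f)=4$, exactly one of the following holds: $V_2=\varnothing$ (so $|V_1|=4$); $|V_2|=1$ and $|V_1|=2$; or $|V_2|=2$ and $V_1=\varnothing$. In the first case, Lemma~\ref{lem-small-values-(2,2,2)} yields $\gamma_{\times 2,t}(G)=4$, which is condition (i).

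For the two cases with $V_2\ne\varnothing$, the plan is to show that $\gamma_t(G)=2$ and then split on $\delta$. If $V_2=\{u_1,u_2\}$ and $V_1=\varnothing$, then $f(N(u_i))\ge 2$ forces $u_1u_2\in E(G)$, and $f(N(v))\ge 2$ for $v\in V_0$ forces $v$ to be adjacent to $u_1$ or $u_2$; hence $\{u_1,u_2\}$ is a total dominating set and $\gamma_t(G)=2$. If $V_2=\{u\}$ and $V_1=\{v_1,v_2\}$, then $f(N(u))\ge 2$ forces $\{v_1,v_2\}\subseteq N(u)$, and $f(N(v))\ge 2$ for $v\in V_0$ forces either $u\in N(v)$ or $\{v_1,v_2\}\subseteq N(v)$; in either subcase $\{u,v_1\}$ is a total dominating set, so again $\gamma_t(G)=2$. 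Once $\gamma_t(G)=2$ is established, if $\delta=1$ we get condition (ii); if $\delta\ge 2$, then Theorem~\ref{general_bound(2,2,2)} gives $\gamma_{(2,2,2)}(G)\le\gamma_{\times 2,t}(G)$, so $\gamma_{\times 2,t}(G)\ge 4$, which is condition (iii).

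For the sufficiency direction, assume one of (i), (ii), (iii) holds. The general strategy is the same in all three cases: obtain the upper bound $\gamma_{(2,2,2)}(G)\le 4$ from Theorem~\ref{general_bound(2,2,2)}, and rule out the values $1,2,3$ to obtain $\gamma_{(2,2,2)}(G)\ge 4$. Under (i) the upper bound is immediate, since $\gamma_{\times 2,t}(G)=4$ forces $\delta\ge 2$ and thus $\gamma_{(2,2,2)}(G)\le\gamma_{\times 2,t}(G)=4$. Under (ii) or (iii) the upper bound follows from $\gamma_{(2,2,2)}(G)\le 2\gamma_t(G)=4$. In every case, Theorem~\ref{teo-case=2} rules out $\gamma_{(2,2,2)}(G)\le 2$ (none of its five patterns matches $w=(2,2,2)$), and Theorem~\ref{teo-case=3-(2,2,2)} rules out $\gamma_{(2,2,2)}(G)=3$: under (i) and (iii) because $\gamma_{\times 2,t}(G)\ne 3$, and under (ii) because $\gamma_{\times 2,t}(G)=3$ would require $\delta\ge 2$.

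The only mildly delicate step is the second subcase of the necessity direction, where one has to check that $\{u,v_1\}$ totally dominates $G$ by carefully using the disjunction ``$u\in N(v)$ or $\{v_1,v_2\}\subseteq N(v)$'' coming from $f(N(v))\ge 2$ on each $v\in V_0$. Everything else is a direct bookkeeping application of previously established results.
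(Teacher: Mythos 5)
Your proposal is correct and takes essentially the same approach as the paper: the same case analysis on $|V_2|\in\{0,1,2\}$ via Lemma~\ref{lem-small-values-(2,2,2)} for necessity, and the same sandwich between the bounds of Theorem~\ref{general_bound(2,2,2)} and the characterization in Theorem~\ref{teo-case=3-(2,2,2)} for sufficiency. The only cosmetic difference is that you rule out $\gamma_{(2,2,2)}(G)\le 2$ via Theorem~\ref{teo-case=2}, whereas the paper gets $\gamma_{(2,2,2)}(G)\ge 3$ directly from the lower bound $\left\lceil 2n/\Delta\right\rceil\ge 3$ of Theorem~\ref{general_bound(2,2,2)}; both are immediate.
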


\begin{proof}
Assume $\gamma_{(2,2,2)}(G)=4$. Notice that $G$ does not have isolated vertices. Let $f(V_0,V_1,V_2)$ be a $\gamma_{(2,2,2)}(G)$-function. If $V_2=\varnothing$, then by Lemma \ref{lem-small-values-(2,2,2)} we obtain that $\gamma_{\times 2,t}(G)=\gamma_{(2,2,2)}(G)=4$, and so (i) follows. 

From now on, assume that $|V_2|\in \{1,2\}$. If $|V_2|=2$, then $V_1=\varnothing$ and, as a result, $V_2$ is a total dominating set of $G$, which implies that $\gamma_t(G)=2$. On the other side, if  $|V_2|=1$, then $|V_1|=2$ and both vertices belonging to $V_1$ are adjacent to the vertex of weight two, and every $v\in V_0$ satisfies  $N(v)\cap V_2\neq \varnothing$ or $V_1\subseteq N(v)$. This implies that the union of $V_2$ with a singleton subset of $V_1$  forms a total dominating set of $G$, and again $\gamma_t(G)=2$. Now, if $\delta\ge 2$, then Theorem \ref{general_bound(2,2,2)} leads to $4= \gamma_{(2,2,2)}(G)\le \gamma_{\times 2,t}(G)$. Hence, by Theorem \ref{teo-case=3-(2,2,2)} we conclude that either $\delta=1$ or $\gamma_{\times 2,t}(G)\ge 4$. Therefore, either (ii) or (iii) holds.

Conversely, if $\gamma_{\times 2,t}(G)=4$, then $G$ has minimum degree $\delta\ge 2$ and by Theorem \ref{general_bound(2,2,2)} we have that $3\le \gamma_{(2,2,2)}(G)\le 4$. Hence, by Theorem \ref{teo-case=3-(2,2,2)} we deduce that  $\gamma_{(2,2,2)}(G)=4$.
Finally, if $\gamma_t(G)=2$, then Theorem \ref{general_bound(2,2,2)} leads to $3\le \gamma_{(2,2,2)}(G)\le 4$. Therefore, if $\delta=1$ or $\gamma_{\times 2,t}(G)\geq 4$, then Theorem \ref{teo-case=3-(2,2,2)} leads to  $\gamma_{(2,2,2)}(G)=4$.
\end{proof}




Theorem \ref{general_bound(2,2,2)} implies the next result. 

\begin{corollary}\label{Cycles(2,2,2)}
For any integer $n\geq 3$,
$$\gamma_{(2,2,2)}(C_n)=n.$$
\end{corollary}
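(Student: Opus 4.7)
The plan is to exploit Theorem \ref{general_bound(2,2,2)} directly, since the corollary is stated as a consequence of it. For $G = C_n$, both the minimum and maximum degrees equal $2$, so the lower bound part of Theorem \ref{general_bound(2,2,2)} immediately yields
$$\gamma_{(2,2,2)}(C_n) \ge \left\lceil \frac{2n}{\Delta} \right\rceil = \left\lceil \frac{2n}{2} \right\rceil = n.$$

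For the matching upper bound, I would exhibit an explicit $(2,2,2)$-dominating function rather than going through $\gamma_{\times 2, t}$ or $\gamma_t$. The natural choice is the constant function $f \equiv 1$ on $V(C_n)$, which assigns weight $1$ to every vertex, i.e., $V_1 = V(C_n)$ and $V_0 = V_2 = \varnothing$. Each vertex $v$ has exactly two neighbours in $C_n$, so $f(N(v)) = 2 \ge w_1 = 2$, verifying the $(2,2,2)$-domination condition (the constraints for $V_0$ and $V_2$ hold vacuously). Thus $f$ is a $(2,2,2)$-dominating function of weight $n$, giving $\gamma_{(2,2,2)}(C_n) \le n$.

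Combining the two bounds yields $\gamma_{(2,2,2)}(C_n) = n$. There is no real obstacle here: the result is just the observation that on a $2$-regular graph the lower bound $\lceil 2n/\Delta \rceil$ is already tight and is realised by the all-ones assignment.
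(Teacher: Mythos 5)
Your proof is correct and takes essentially the same route as the paper: the lower bound $n$ comes from $\left\lceil 2n/\Delta \right\rceil$ in Theorem \ref{general_bound(2,2,2)}, and your explicit all-ones function is exactly the same content as the paper's implicit upper bound $\gamma_{(2,2,2)}(C_n)\le \gamma_{\times 2,t}(C_n)=n$ (since $\delta=2$), because $f\equiv 1$ is precisely the indicator of $V(C_n)$ viewed as a double total dominating set. No gaps.
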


In order to give the value of $\gamma_{(2,2,2)}(P_n)$, we recall the following well-known result. 

\begin{proposition} {\rm \cite{Henning2013} }\label{TotalDominationCyclesPaths}
For any integer $n\ge 3$,
$$\gamma_t(P_n)=
\left\{ \begin{array}{ll}
\frac{n}{2} & if \, n\equiv 0\pmod 4,\\[5pt]
\frac{n+1}{2} & if \,  n\equiv 1,3\pmod 4,\\[5pt]
\frac{n}{2}+1  & if \,  n\equiv 2\pmod 4.
\end{array}\right. $$
\end{proposition}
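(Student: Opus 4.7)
The strategy is the classical one: a constructive upper bound matching the claimed value, together with a double-counting lower bound, strengthened by a parity argument when $n\equiv 2\pmod 4$.

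For the upper bound I would label the vertices of $P_n$ as $v_1,\ldots,v_n$ and build a total dominating set out of blocks of the form $B_j=\{v_{4j+2},v_{4j+3}\}$. Each such block is an edge whose two vertices together have neighbourhood $\{v_{4j+1},v_{4j+2},v_{4j+3},v_{4j+4}\}$, so a disjoint union of blocks is automatically total-dominating on the span it covers. For $n=4k$ the union $B_0\cup B_1\cup\cdots\cup B_{k-1}$ already has size $2k=n/2$ and is total-dominating. For the remaining residues I would adjoin a short end-correction: $\{v_{4k}\}$ when $n=4k+1$, $\{v_{4k+1},v_{4k+2}\}$ when $n=4k+2$, and $\{v_{4k+2},v_{4k+3}\}$ when $n=4k+3$. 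A one-line verification that every vertex has a neighbour in the resulting set then yields the three upper bounds $(n+1)/2$, $n/2+1$ and $(n+1)/2$, respectively.

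For the matching lower bound I would start from the double-counting inequality
\[
2|D|\;\ge\;\sum_{v\in D}\deg_{P_n}(v)\;=\;\sum_{u\in V(P_n)}|N(u)\cap D|\;\ge\; n,
\]
valid for every total dominating set $D$, since $\Delta(P_n)\le 2$ and the total domination condition forces $|N(u)\cap D|\ge 1$ for every $u$. This gives $|D|\ge\lceil n/2\rceil$, which already matches the claim whenever $n\not\equiv 2\pmod 4$.

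The main obstacle, and the one genuinely delicate case, is $n=4k+2$, where the counting bound must be improved by one. I would argue by contradiction: assume $|D|=2k+1$. Equality throughout the displayed inequality forces (a) every vertex of $D$ to have full degree $2$ in $P_n$, so neither endpoint $v_1$ nor $v_n$ lies in $D$, and (b) $|N(u)\cap D|=1$ for every vertex $u\in V(P_n)$. Propagating (b) from the left, $v_1\notin D$ forces $v_2\in D$ (as $v_2$ is $v_1$'s only neighbour), then $v_1\notin D$ together with $|N(v_2)\cap D|=1$ forces $v_3\in D$, then $v_4\notin D$, $v_5\notin D$, $v_6\in D$, and so on. A straightforward induction yields the periodic pattern $v_i\in D$ if and only if $i\equiv 2,3\pmod 4$. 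Since $n=4k+2\equiv 2\pmod 4$, the pattern forces $v_n\in D$, contradicting (a). Hence $\gamma_t(P_{4k+2})\ge 2k+2=n/2+1$, which closes the remaining case. Everything else reduces to routine verification of the constructed dominating sets.
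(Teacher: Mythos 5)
Your proof is correct, but note that the paper does not prove this proposition at all: it is quoted as a known result from the Henning--Yeo monograph \cite{Henning2013}, so there is no internal argument to compare with, and what you have produced is a self-contained replacement. Your route is the natural one and every step checks out: the block construction $B_j=\{v_{4j+2},v_{4j+3}\}$ together with the three end-corrections yields total dominating sets of exactly the claimed sizes in each residue class; the double count $2|D|\ge\sum_{v\in D}\deg(v)=\sum_{u\in V(P_n)}|N(u)\cap D|\ge n$ gives $|D|\ge\lceil n/2\rceil$, which settles every residue except $n\equiv 2\pmod 4$; and there your rigidity analysis is the genuine content --- equality forces every vertex of $D$ to have degree $2$ (so neither endpoint lies in $D$) and every vertex to have exactly one neighbour in $D$, the left-to-right propagation then forces $v_i\in D$ if and only if $i\equiv 2,3\pmod 4$, and since $n\equiv 2\pmod 4$ this would place the degree-one endpoint $v_n$ in $D$, a contradiction. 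For comparison, the standard textbook treatment (and the technique this paper itself uses for the closely analogous statement $\gamma_{(2,2,2)}(P_n)=2\gamma_t(P_n)$ in Proposition \ref{teo-Pn}) obtains the lower bound by induction on $n$ in steps of four, after a normalization lemma controlling the values on the last four vertices of the path; your direct equality-case argument avoids induction entirely and isolates precisely why the residue $n\equiv 2\pmod 4$ costs one extra vertex, at the price of being specific to graphs of maximum degree two, whereas the inductive scheme generalizes more readily to the weighted variants studied in the paper.
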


\begin{lemma}\label{lem-new-new}
If $P_n=u_1u_2\ldots u_n$ is a path  of order $n\geq 6$, then  there exists a $\gamma_{(2,2,2)}(P_n)$-function $f$ such that $f(u_n)=f(u_{n-3})=0$ and $f(u_{n-1})=f(u_{n-2})=2$. 
\end{lemma}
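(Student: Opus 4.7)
The plan is to take any $\gamma_{(2,2,2)}(P_n)$-function $g$ and perform a sequence of weight-preserving local modifications so that the resulting function has the desired tail pattern. Because $g$ has minimum weight, any proposed move that \emph{strictly} decreases the total weight must be impossible (otherwise $g$ would not be minimum); only weight-preserving swaps remain to be executed. Note first that $N(u_n)=\{u_{n-1}\}$, so the defining inequality $g(N(u_n))\ge 2$ immediately forces $g(u_{n-1})=2$ in \emph{every} $(2,2,2)$-dominating function.

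The first modification normalises $(g(u_{n-2}),g(u_n))$. The only constraint relating these two values is $g(u_{n-2})+g(u_n)\ge 2$ (from $u_{n-1}$), and $g(u_n)$ appears in no other constraint. Hence $g(u_{n-2})+g(u_n)=2$, otherwise decreasing $g(u_n)$ would contradict minimality. If $g(u_n)\neq 0$, I swap to obtain $(g(u_{n-2}),g(u_n))=(2,0)$; the only outside constraint that depends on $g(u_{n-2})$ is $u_{n-3}$'s inequality $g(u_{n-4})+g(u_{n-2})\ge 2$, which is only \emph{relaxed} when $g(u_{n-2})$ is raised to $2$.

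With $g(u_n)=0$ and $g(u_{n-1})=g(u_{n-2})=2$, the remaining task is $g(u_{n-3})=0$. The constraints at $u_{n-2}$ and $u_{n-3}$ now hold regardless of $g(u_{n-3})$; the only constraint touching this value is $u_{n-4}$'s: $g(u_{n-5})+g(u_{n-3})\ge 2$ (the hypothesis $n\ge 6$ guarantees that $u_{n-5}$ exists). If $g(u_{n-3})=1$, then $g(u_{n-5})\ge 1$; the subcase $g(u_{n-5})\ge 2$ would let me drop $g(u_{n-3})$ to $0$ for free and contradict minimality, so $g(u_{n-5})=1$, and the simultaneous assignment $g(u_{n-3}):=0$, $g(u_{n-5}):=2$ preserves the total weight. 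The case $g(u_{n-3})=2$ is analogous and forces $g(u_{n-5})=0$, after which the same swap works. Raising $g(u_{n-5})$ never violates anything: it can only relax the constraint at $u_{n-6}$ (when that vertex exists), and it does not appear in $u_{n-5}$'s own constraint, which depends solely on $g(u_{n-6})$ and $g(u_{n-4})$.

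The main obstacle is simply to execute the case analysis cleanly and verify that every swap satisfies \emph{every} remaining constraint, particularly at the boundary indices $n-5$ and $n-6$ when $n\in\{6,7\}$; in those small cases the forced value $g(u_2)=2$ at the left end of the path already rules out the awkward subcases ($g(u_{n-5})\in\{0,1\}$ with $n-5=2$), so the argument goes through uniformly.
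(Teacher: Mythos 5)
Your proposal is correct and follows essentially the same route as the paper: both force $f(u_{n-1})=2$ at the leaf, normalise $(f(u_{n-2}),f(u_n))$ to $(2,0)$ using the constraint at $u_{n-1}$, and then shift the weight of $u_{n-3}$ onto $u_{n-5}$, with the constraint at $u_{n-4}$ guaranteeing the modified function remains $(2,2,2)$-dominating. The paper compresses your case analysis on $g(u_{n-3})$ and $g(u_{n-5})$ into the single reassignment $f'(u_{n-5})=\min\{2,f(u_{n-5})+f(u_{n-3})\}$ (choosing a minimum function with $|V_2|$ maximum), but this is only a presentational difference.
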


\begin{proof}
Let $f(V_0,V_1,V_2)$ be a $\gamma_{(2,2,2)}(P_n)$-function such that $|V_2|$ is maximum. Since $u_n$ is a leaf,  $f(u_{n-1})=2$ . Notice that $f(u_n)+f(u_{n-2})\geq 2$. Hence,  we can assume that $f(u_{n-2})=2$ and $f(u_n)=0$. Now, if $f(u_{n-3})>0$, then we can define a $(2,2,2)$-dominating function  $f'$ by $f'(u_{n-3})=0$, $f'(u_{n-5})=\min\{2,f(u_{n-5})+f(u_{n-3})\}$ and $f'(u_i)=f(u_i)$ for the remaining cases. Since $\omega(f')\le \omega(f)=\gamma_{(2,2,2)}(P_n)$, either $f'$ is a $\gamma_{(2,2,2)}(P_n)$-function with $f'(u_{n-3})=0$ or $f(u_{n-3})=0$. In both cases the result follows. 
\end{proof}

\begin{proposition}\label{teo-Pn}
For any integer $n\geq 3$,
$$\gamma_{(2,2,2)}(P_n)=2\gamma_t(P_n)=\left\{ \begin{array}{ll}
n & if \, n\equiv 0\pmod 4,\\[5pt]
n+1 & if \,  n\equiv 1,3\pmod 4,\\[5pt]
n+2  & if \,  n\equiv 2\pmod 4.
\end{array}\right. $$
\end{proposition}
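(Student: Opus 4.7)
My plan is to split the result into the upper bound, which is essentially given to us, and the matching lower bound, which will be proved by induction on $n$ using Lemma \ref{lem-new-new} as the key tool.

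For the upper bound, I will combine $\gamma_{(2,2,2)}(P_n) \le 2\gamma_t(P_n)$ from Theorem \ref{general_bound(2,2,2)} with the explicit formula for $\gamma_t(P_n)$ in Proposition \ref{TotalDominationCyclesPaths}. So all the work lies in proving $\gamma_{(2,2,2)}(P_n) \ge 2\gamma_t(P_n)$, which by the formula is the same as showing $\gamma_{(2,2,2)}(P_n) \ge n, n+1, n+1, n+2$ according as $n \equiv 0,1,3,2 \pmod 4$.

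For the lower bound, I would proceed by induction on $n$, reducing from $P_n$ to $P_{n-4}$. Concretely, for $n \ge 7$ I would take a $\gamma_{(2,2,2)}(P_n)$-function $f$ as provided by Lemma \ref{lem-new-new}, so that $f(u_n)=f(u_{n-3})=0$ and $f(u_{n-1})=f(u_{n-2})=2$. The key observation is that the restriction $f'$ of $f$ to $\{u_1,\dots,u_{n-4}\}$ is itself a $(2,2,2)$-dominating function on $P_{n-4}=u_1\cdots u_{n-4}$: the neighbourhoods of $u_1,\dots,u_{n-5}$ coincide in the two paths, and at $u_{n-4}$ we have $f'(N_{P_{n-4}}(u_{n-4})) = f(u_{n-5}) = f(u_{n-5})+f(u_{n-3}) = f(N_{P_n}(u_{n-4})) \ge 2$ since $f(u_{n-3})=0$. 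Hence
\[
\gamma_{(2,2,2)}(P_n) = \omega(f) \ge \omega(f') + 4 \ge \gamma_{(2,2,2)}(P_{n-4}) + 4,
\]
and a quick check using Proposition \ref{TotalDominationCyclesPaths} shows that $2\gamma_t(P_n)-2\gamma_t(P_{n-4})=4$ in every residue class, so the induction closes.

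It then remains to verify the base cases $n\in\{3,4,5,6\}$, where the lemma does not apply. These I would handle by direct inspection of the constraint $f(N(v))\ge 2$ for every vertex $v$: the two ``interior-neighbours'' $u_2$ and $u_{n-1}$ are forced to have value at least $2$ to dominate the leaves, and a brief case analysis on the value of $f(u_3)$ (and, for $n=6$, on $f(u_4)$) yields $\gamma_{(2,2,2)}(P_3)=\gamma_{(2,2,2)}(P_4)=4$, $\gamma_{(2,2,2)}(P_5)=6$ and $\gamma_{(2,2,2)}(P_6)=8$, matching the claimed formula. The main obstacle, and the reason Lemma \ref{lem-new-new} is needed, is precisely this: without the structural guarantee that a minimum $(2,2,2)$-dominating function can be chosen to ``end cleanly'' with the pattern $\ldots,?,2,2,0$, an arbitrary optimal function on $P_n$ need not restrict to a valid function on a shorter subpath, so producing the clean $+4$ recurrence requires that normalisation step.
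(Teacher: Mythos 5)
Your proposal is correct and follows essentially the same route as the paper: the upper bound via Theorem \ref{general_bound(2,2,2)}, and the lower bound by induction with step four, restricting a $\gamma_{(2,2,2)}(P_n)$-function normalised by Lemma \ref{lem-new-new} to $P_{n-4}$ and checking the base cases $n\in\{3,4,5,6\}$ directly. Your explicit verification that the restriction $f'$ is a valid $(2,2,2)$-dominating function on $P_{n-4}$ (using $f(u_{n-3})=0$ at the new endpoint) is a detail the paper leaves implicit, and it is exactly right.
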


\begin{proof}
Since Theorem \ref{general_bound(2,2,2)} leads to $\gamma_{(2,2,2)}(P_n)\leq 2\gamma_t(P_n)$, we only need to prove that $\gamma_{(2,2,2)}(P_n)\geq 2\gamma_t(P_n)$. We proceed by induction on $n$. It is easy to check that $\gamma_{(2,2,2)}(P_n)=2\gamma_t(P_n)$ for $n=3,4,5,6$. This establishes the base case. Now, we assume that $n\geq 7$ and  $\gamma_{(2,2,2)}(P_k)\geq 2\gamma_t(P_k)$ for $k<n$. Let $f(V_0,V_1,V_2)$ be a $\gamma_{(2,2,2)}(P_n)$-function  which satisfies Lemma \ref{lem-new-new},  and let $f'$ be the restriction of $f$ to $V(P_{n-4})$, where 
$P_{n}=u_1u_2\ldots u_{n}$ and $P_{n-4}=u_1u_2\ldots u_{n-4}$. Hence,  by applying the induction hypothesis,
$$\gamma_{(2,2,2)}(P_n)=\omega(f)=\omega(f')+4\geq \gamma_{(2,2,2)}(P_{n-4})+4\geq 2\gamma_t(P_{n-4})+4\geq 2\gamma_t(P_n).$$
 To conclude the proof we apply Proposition \ref{TotalDominationCyclesPaths}.
\end{proof}

\subsection{Preliminary results on $(2,2,1)$-domination}
\label{SubSection(2,2,1)}

\begin{theorem}\label{Bounds(2,2,1)}
For any graph $G$ with no isolated vertex, order $n$ and maximum degree $\Delta$, 
  $$  \left\lceil  \frac{2n+\gamma_t(G)}{\Delta+1} \right\rceil\le \gamma_{(2,2,1)}(G) \le  \min\{3\gamma(G), 2\gamma_t(G)\}.$$
  Furthermore, if $G$ has minimum degree $\delta\geq 2$, then
$$\gamma_{(2,2,1)}(G)\leq \gamma_{\times 2,t}(G).$$
\end{theorem}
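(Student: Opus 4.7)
The plan is to prove the lower bound via Lemma \ref{LemmaCotasINf} combined with a structural observation, and to derive the three upper bounds partly by monotonicity and partly by an explicit construction from a minimum dominating set.

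For the lower bound, I would take a $\gamma_{(2,2,1)}(G)$-function $f(V_0,V_1,V_2)$ and apply Lemma \ref{LemmaCotasINf} with $w=(2,2,1)$, obtaining
$$\Delta\,\omega(f)\;\ge\;2n+(2-2)|V_1|+(1-2)|V_2|\;=\;2n-|V_2|.$$
The key structural observation is that $V_1\cup V_2$ is a total dominating set of $G$: every vertex $v$ satisfies $f(N(v))\ge 1$ (since $w_0=w_1=2$ and $w_2=1$), so $v$ has a neighbour with positive $f$-value, which necessarily lies in $V_1\cup V_2$. Hence $|V_1|+|V_2|\ge \gamma_t(G)$. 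Adding this to the previous inequality and using $\omega(f)=|V_1|+2|V_2|$ yields $(\Delta+1)\,\omega(f)\ge 2n+\gamma_t(G)$, and taking ceilings concludes the lower bound.

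The upper bounds $\gamma_{(2,2,1)}(G)\le 2\gamma_t(G)$ and, under the hypothesis $\delta\ge 2$, $\gamma_{(2,2,1)}(G)\le \gamma_{\times 2,t}(G)$, both follow immediately from the monotonicity Remark \ref{REmarkMonotonicity}: since $(2,2,1)\prec(2,2,2)$ and $2\le 2\delta$ whenever $\delta\ge 1$, we get $\gamma_{(2,2,1)}(G)\le \gamma_{(2,2,2)}(G)$, and then Theorem \ref{general_bound(2,2,2)} supplies the two corresponding bounds on $\gamma_{(2,2,2)}(G)$.

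Only the inequality $\gamma_{(2,2,1)}(G)\le 3\gamma(G)$ requires a direct construction. Given a $\gamma(G)$-set $D$, since $G$ has no isolated vertex, for each $v\in D$ with no neighbour in $D$ I pick one neighbour $p(v)\in V(G)\setminus D$; let $P$ denote the resulting set of chosen neighbours, so $|P|\le |D|$. Define $f(x)=2$ for $x\in D$, $f(x)=1$ for $x\in P$, and $f(x)=0$ otherwise. A case analysis on whether $v$ lies in $V(G)\setminus(D\cup P)$ (dominated by weight $2$ from $D$), in $P$ (has $f$-value $1$ and a $D$-neighbour of weight $2$), or in $D$ (has either a $D$-neighbour of weight $2$ or a designated $p(v)\in P$ of weight $1$) shows that $f$ is a $(2,2,1)$-dominating function, and $\omega(f)=2|D|+|P|\le 3\gamma(G)$. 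The most delicate point, though minor, is checking $f(N(v))\ge 1$ at vertices $v\in D$ with no neighbour in $D$; the assignment $f(p(v))=1$ is precisely tailored to this case, so I do not anticipate any serious obstacle.
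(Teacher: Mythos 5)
Your proposal is correct and follows essentially the same route as the paper: the lower bound via Lemma \ref{LemmaCotasINf} together with the observation that $V_1\cup V_2$ is a total dominating set, the bound $3\gamma(G)$ via the identical construction assigning weight $2$ to a $\gamma(G)$-set and weight $1$ to chosen neighbours of its $D$-isolated vertices, and the remaining two bounds via monotonicity through $\gamma_{(2,2,2)}(G)$ (the paper cites Corollary \ref{CorollaryGeneralUpperBounds}~(ii) directly for $\gamma_{\times 2,t}(G)$, which is the same chain of results). Your explicit justification that every vertex has a neighbour of positive weight, making $V_1\cup V_2$ a total dominating set, is a point the paper leaves implicit.
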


\begin{proof} 
In order to prove the upper bound $\gamma_{(2,2,1)}(G)\le 2\gamma_t(G)$, we apply Remark \ref{REmarkMonotonicity} and Theorem \ref{general_bound(2,2,2)}, i.e.,   $\gamma_{(2,2,1)}(G)\le \gamma_{(2,2,2)}\le 2\gamma_t(G)$. 


Now, let $S$ be a $\gamma(G)$-set. Since $G$ does not have isolated vertex, for each vertex $x\in S$ such that $N(x)\cap S=\varnothing$, we fix one vertex $x'\in N(x)$ and we form a set $S'$ from them. Hence, $S\cup S'$ is a total dominating set and $|S\cup S'|=|S|+|S'|\le 2\gamma(G)$. 
Notice that the function $g(X_0,X_1,X_2)$ defined by $X_2=S$ and $X_1=S'$, is a $(2,2,1)$-dominating function on $G$. Thus, $\gamma_{(2,2,1)}(G) \le \omega(g)=2|S|+|S'|\le 3\gamma(G)$, and so  $\gamma_{(2,2,1)}(G) \le \min\{2\gamma_t(G), 3\gamma(G)\}$.

On the other side, if $G$ has minimum degree $\delta\geq 2$, then by
Corollary \ref{CorollaryGeneralUpperBounds} (ii) we have that  $\gamma_{(2,2,1)}(G)\le\gamma_{\times 2,t}(G).$

In order to prove the lower bound, let $f(V_0,V_1,V_2)$ be a $\gamma_{(2,2,1)}(G)$-function.
Since $V_1\cup V_2$ is a total dominating set,  $\gamma_t(G)\le |V_1|+|V_2|$.  Furthermore, from Lemma \ref{LemmaCotasINf} we have, $2n-|V_2|\le \Delta\gamma_{(2,2,1)}(G)$, which implies that
$2n+\gamma_t(G)\le 2n+|V_1|+|V_2|\le \Delta\gamma_{(2,2,1)}(G)+|V_1|+2|V_2|= (\Delta+1)\gamma_{(2,2,1)}(G).$
Therefore, the lower bound follows.
\end{proof}

The bounds above are tight. For instance, the graph shown in Figure~\ref{figDominat=2packing} satisfies  $\gamma_{(2,2,1)}(G)=3\gamma(G)=9$.  Next we show that the remaining two bounds are also achieved. 

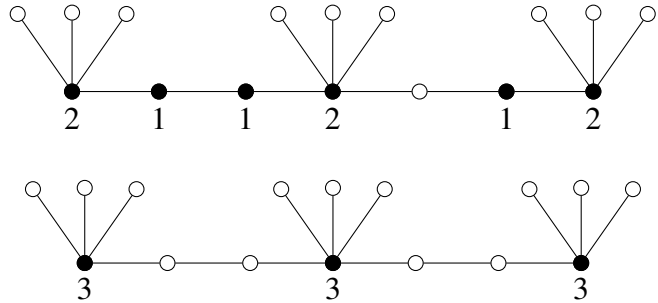
\begin{figure}[ht]
\centering
\begin{tikzpicture}[scale=2.1]
\foreach \ind in {1,...,7}
{
\pgfmathparse{.55*(\ind-1)};
\node[draw, shape=circle, fill=white, scale=.5] at (\pgfmathresult,0)(v\ind){};
}

\foreach \ind in {1,4,7}
{
\node[draw, shape=circle, fill=black, scale=.5] at (v\ind){};
\node at ([shift={(0,-.17)}]v\ind) {$2$};
}
\foreach \ind in {2,3,6}
{
\node[draw, shape=circle, fill=black, scale=.5] at (v\ind){};
\node at ([shift={(0,-.17)}]v\ind) {$1$};
}

\node[draw, shape=circle, fill=white, scale=.5] at ([shift={(55:0.6)}]v1)(v8){};
\node[draw, shape=circle, fill=white, scale=.5] at ([shift={(90:0.5)}]v1)(v9){};
\node[draw, shape=circle, fill=white, scale=.5] at ([shift={(125:0.6)}]v1)(v10){};
\node[draw, shape=circle, fill=white, scale=.5] at ([shift={(55:0.6)}]v4)(v11){};
\node[draw, shape=circle, fill=white, scale=.5] at ([shift={(90:0.5)}]v4)(v12){};
\node[draw, shape=circle, fill=white, scale=.5] at ([shift={(125:0.6)}]v4)(v13){};
\node[draw, shape=circle, fill=white, scale=.5] at ([shift={(55:0.6)}]v7)(v14){};
\node[draw, shape=circle, fill=white, scale=.5] at ([shift={(90:0.5)}]v7)(v15){};
\node[draw, shape=circle, fill=white, scale=.5] at ([shift={(125:0.6)}]v7)(v16){};
\foreach \ind in {1,...,6}
{
\pgfmathtruncatemacro{\index}{\ind+1};
\draw (v\ind)--(v\index);
}
\foreach \ind in {8,...,10}
{
\draw (v1)--(v\ind);
}
\foreach \ind in {11,...,13}
{
\draw (v4)--(v\ind);
}
\foreach \ind in {14,...,16}
{
\draw (v7)--(v\ind);
}

\end{tikzpicture}

\vspace{0.5cm}

\begin{tikzpicture}[scale=2]

\foreach \ind in {17,...,23}
{
\pgfmathparse{.55*(\ind-1)};
\node[draw, shape=circle, fill=white, scale=.5] at (\pgfmathresult-4.4,0)(v\ind){};
}

\foreach \ind in {17,20,23}
{
\node[draw, shape=circle, fill=black, scale=.5] at (v\ind){};
\node at ([shift={(0,-.17)}]v\ind) {$3$};
}
\node[draw, shape=circle, fill=white, scale=.5] at ([shift={(55:0.6)}]v17)(v24){};
\node[draw, shape=circle, fill=white, scale=.5] at ([shift={(90:0.5)}]v17)(v25){};
\node[draw, shape=circle, fill=white, scale=.5] at ([shift={(125:0.6)}]v17)(v26){};
\node[draw, shape=circle, fill=white, scale=.5] at ([shift={(55:0.6)}]v20)(v27){};
\node[draw, shape=circle, fill=white, scale=.5] at ([shift={(90:0.5)}]v20)(v28){};
\node[draw, shape=circle, fill=white, scale=.5] at ([shift={(125:0.6)}]v20)(v29){};
\node[draw, shape=circle, fill=white, scale=.5] at ([shift={(55:0.6)}]v23)(v30){};
\node[draw, shape=circle, fill=white, scale=.5] at ([shift={(90:0.5)}]v23)(v31){};
\node[draw, shape=circle, fill=white, scale=.5] at ([shift={(125:0.6)}]v23)(v32){};
\foreach \ind in {17,...,22}
{
\pgfmathtruncatemacro{\index}{\ind+1};
\draw (v\ind)--(v\index);
}
\foreach \ind in {24,...,26}
{
\draw (v17)--(v\ind);
}
\foreach \ind in {27,...,29}
{
\draw (v20)--(v\ind);
}
\foreach \ind in {30,...,32}
{
\draw (v23)--(v\ind);
}
\end{tikzpicture}
\caption{
This figure shows a
$\gamma_{(2,2,1)}(G)$-function and a $ \gamma_{(2,2,2,0)}(G)$-function on the same graph.}\label{figDominat=2packing} 
\end{figure}

\begin{corollary}
Let $G$ be a graph with no isolated vertex, order $n$ and maximum degree $\Delta$. If  
 $\gamma_t(G)<\frac{n+\Delta+1}{\Delta+1/2}$, then   $$\gamma_{(2,2,1)}(G)=2\gamma_t(G)\; \;\text{ or } \;\; \gamma_{(2,2,1)}(G)=\left\lceil  \frac{2n+\gamma_t(G)}{\Delta+1} \right\rceil .$$
\end{corollary}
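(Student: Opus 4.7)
The plan is to combine the bounds already established in Theorem \ref{Bounds(2,2,1)} with a straightforward arithmetic estimate derived from the hypothesis. Set
\[
L = \left\lceil \frac{2n+\gamma_t(G)}{\Delta+1} \right\rceil \quad \text{and} \quad U = 2\gamma_t(G).
\]
By Theorem \ref{Bounds(2,2,1)} we have $L \le \gamma_{(2,2,1)}(G) \le U$, so the corollary will follow if I show that, under the hypothesis $\gamma_t(G)<\frac{n+\Delta+1}{\Delta+1/2}$, one has $U - L \le 1$, since then the integer $\gamma_{(2,2,1)}(G)$, being sandwiched between $L$ and $U$, must coincide with one of the two endpoints.

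The key step is the inequality $U - L \le 1$. Using the fact that $\lceil x \rceil \ge 2\gamma_t(G)-1$ is equivalent to $x > 2\gamma_t(G)-2$, I would show that
\[
\frac{2n+\gamma_t(G)}{\Delta+1} \;>\; 2\gamma_t(G) - 2
\]
is equivalent, after clearing denominators and collecting terms, to
\[
2n + 2\Delta + 2 \;>\; (2\Delta+1)\,\gamma_t(G),
\]
which is exactly the hypothesis $\gamma_t(G)<\frac{n+\Delta+1}{\Delta+1/2}=\frac{2(n+\Delta+1)}{2\Delta+1}$. This yields $L \ge 2\gamma_t(G)-1 = U-1$, hence $U-L \le 1$.

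Putting the pieces together: since $L,U\in\mathbb{Z}$ with $0 \le U-L \le 1$, the only integer values available to $\gamma_{(2,2,1)}(G)$ in the interval $[L,U]$ are $L$ and $U$ themselves. Therefore
\[
\gamma_{(2,2,1)}(G) = 2\gamma_t(G) \quad \text{or} \quad \gamma_{(2,2,1)}(G) = \left\lceil \frac{2n+\gamma_t(G)}{\Delta+1} \right\rceil,
\]
as claimed. There is no real obstacle here; the only care needed is to handle the ceiling correctly when converting the strict inequality on $\gamma_t(G)$ into the bound $L \ge U-1$, which is the reason the denominator $\Delta+\tfrac{1}{2}$ appears in the hypothesis rather than $\Delta+1$.
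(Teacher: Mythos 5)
Your proposal is correct, and it is essentially the paper's argument: the paper proves the contrapositive (if $\gamma_{(2,2,1)}(G)$ differs from both endpoints, then $\left\lceil \frac{2n+\gamma_t(G)}{\Delta+1}\right\rceil+1\le \gamma_{(2,2,1)}(G)\le 2\gamma_t(G)-1$ forces $\gamma_t(G)\ge \frac{n+\Delta+1}{\Delta+1/2}$), which after the same ceiling manipulation and clearing of denominators is exactly your direct verification that the hypothesis yields $U-L\le 1$. Your arithmetic, including the equivalence $\lceil x\rceil\ge 2\gamma_t(G)-1 \iff x>2\gamma_t(G)-2$, checks out, so there is nothing to fix.
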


\begin{proof}
If  $\gamma_{(2,2,1)}(G)\ne \left\lceil  \frac{2n+\gamma_t(G)}{\Delta+1} \right\rceil$ and  $ \gamma_{(2,2,1)}(G)\ne2\gamma_t(G)$, then by Theorem \ref{Bounds(2,2,1)}  we deduce that
 $  \left\lceil  \frac{2n+\gamma_t(G)}{\Delta+1} \right\rceil +1 \le \gamma_{(2,2,1)}(G) \le 2\gamma_t(G)-1$, which implies that $\gamma_t(G)\ge  \frac{n+\Delta+1}{\Delta+1/2}$.
Therefore, the result follows.
\end{proof}

For the graphs $G_2$ and $G_3$ illustrated in Figure \ref{ZZZ} we have that $\gamma_t(G_2)=2<\frac{22}{9}=\frac{n+\Delta+1}{\Delta+1/2}$ and $\gamma_t(G_3)=4<\frac{32}{7}=\frac{n+\Delta+1}{\Delta+1/2}$. Notice that, 
$\gamma_{(2,2,1)}(G_2)=3=\left\lceil  \frac{2n+\gamma_t(G_2)}{\Delta+1} \right\rceil$  and $\gamma_{(2,2,1)}(G_3)=8=2\gamma_t(G_3)$.

Below we characterize the graphs with $\gamma_{(2,2,1)}(G)=3$.

\begin{theorem}\label{teo-case=3-(2,2,1)}
For a graph $G$ with no isolated vertex, the following statements are equivalent.
\begin{enumerate}
\item[{\rm (i)}] $\gamma_{(2,2,1)}(G)=3.$
\item[{\rm (ii)}] $\gamma(G)=1$ or $\gamma_{\times 2,t}(G)=3$. 
\end{enumerate}
\end{theorem}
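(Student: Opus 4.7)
The plan is to combine a straightforward lower bound $\gamma_{(2,2,1)}(G)\ge 3$ with a short case analysis on the two possible configurations of a weight-$3$ function, and then to produce explicit constructions for the converse. The lower bound is immediate by direct inspection: a function of weight $1$ fails because the unique vertex in $V_1$ requires $f(N(\cdot))\ge 2$; a function of weight $2$ fails either because the unique $V_2$-vertex has $f(N(\cdot))=0<1$, or because two $V_1$-vertices give each other at most weight $1$ in $f(N(\cdot))$, short of the required $2$. Note also that $\gamma_{(2,2,1)}(G)$ is finite since $w_2=1\le 2\delta(G)$.

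For the forward implication, let $f(V_0,V_1,V_2)$ be a $\gamma_{(2,2,1)}(G)$-function of weight $3$. The identity $|V_1|+2|V_2|=3$ forces $(|V_2|,|V_1|)\in\{(1,1),(0,3)\}$. In the first case, write $V_2=\{v\}$ and $V_1=\{u\}$; the requirement $f(N(x))\ge 2$ for every $x\in V_0$ forces $v\in N(x)$, since otherwise $f(N(x))\le f(u)=1$, and likewise $f(N(u))\ge 2$ forces $v\in N(u)$. Thus $v$ is adjacent to every other vertex, so $\gamma(G)=1$. In the second case, $V_1$ is a set of three vertices each having at least two neighbours in $V_1$, so $V_1$ is a double total dominating set; combined with the obvious bound $\gamma_{\times 2,t}(G)\ge 3$ (a two-element set cannot give each of its members two neighbours inside the set), we conclude $\gamma_{\times 2,t}(G)=3$.

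For the converse, we exhibit a $(2,2,1)$-dominating function of weight $3$ under each hypothesis. If $\gamma(G)=1$, pick a universal vertex $v$ and any $u\in N(v)$, and set $f(v)=2$, $f(u)=1$, $f\equiv 0$ elsewhere; the three defining inequalities are all immediate. If $\gamma_{\times 2,t}(G)=3$, take a minimum double total dominating set $D=\{a,b,c\}$ and let $f$ be its characteristic function; here $V_2=\varnothing$ and $f(N(x))\ge 2$ for every $x\in V(G)$ by the choice of $D$, so $f$ is $(2,2,1)$-dominating. In either case $\gamma_{(2,2,1)}(G)\le 3$, which combined with the lower bound gives equality.

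No serious obstacle is anticipated: the argument reduces to classifying the two weight-$3$ configurations and observing that each matches exactly one alternative in (ii). The only mild subtlety is that the hypothesis $\gamma_{\times 2,t}(G)=3$ implicitly presupposes $\delta(G)\ge 2$, but this is automatic whenever the case $|V_2|=0$ occurs, as each vertex is then forced to have at least two neighbours in $V_1$.
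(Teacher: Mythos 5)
Your proof is correct, and its forward direction coincides with the paper's: both classify the weight-$3$ configurations $(|V_2|,|V_1|)\in\{(1,1),(0,3)\}$ and read off $\gamma(G)=1$ in the first case and $\gamma_{\times 2,t}(G)=3$ in the second. You are in fact slightly more careful there: the paper simply asserts that a nonempty $V_2$ is a dominating set of cardinality one, while you spell out why the weight-$2$ vertex must be adjacent to every vertex of $V_0\cup V_1$ (since $f(u)=1$ alone cannot meet the threshold $w_0=w_1=2$), and you justify $\gamma_{\times 2,t}(G)\ge 3$ rather than taking it for granted. Where you genuinely diverge is in the lower bound and the converse: the paper invokes its Theorem \ref{Bounds(2,2,1)}, which packages $3\le\left\lceil\frac{2n+\gamma_t(G)}{\Delta+1}\right\rceil\le\gamma_{(2,2,1)}(G)$ together with the upper bounds $\gamma_{(2,2,1)}(G)\le 3\gamma(G)$ and, for $\delta\ge 2$, $\gamma_{(2,2,1)}(G)\le\gamma_{\times 2,t}(G)$, whereas you inline the elementary content of those bounds: a direct check that weights $1$ and $2$ are infeasible, and explicit weight-$3$ functions (assigning $2$ to a universal vertex and $1$ to one of its neighbours, or taking the characteristic function of a $\gamma_{\times 2,t}(G)$-set). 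Your route is self-contained and makes the theorem readable in isolation; the paper's route is shorter on the page and reuses machinery it needs elsewhere anyway (e.g.\ in Theorem \ref{teo-case=4-(2,2,1)}). Your closing remark that $\gamma_{\times 2,t}(G)=3$ presupposes $\delta\ge 2$, and that this is automatic in the $V_2=\varnothing$ case, matches the paper's explicit observation to the same effect.
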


\begin{proof}
Assume first that $\gamma_{(2,2,1)}(G)=3$, and let $f(V_0,V_1,V_2)$ be a $\gamma_{(2,2,1)}(G)$-function. If $V_2\ne \varnothing$, then $V_2$ is a dominating set of cardinality one. Hence, $\gamma(G)=1$. Now, if $V_2= \varnothing$, then $V_1$ is a double total dominating set of cardinality three. Thus,  $\gamma_{\times 2,t}(G)=3$. 

On the other side, by Theorem \ref{Bounds(2,2,1)} we have that $3\leq \left\lceil  \frac{2n+\gamma_t(G)}{\Delta+1} \right\rceil\leq \gamma_{(2,2,1)}(G)\leq 3\gamma(G)$.
Hence, if $\gamma(G)=1$, then $\gamma_{(2,2,1)}(G)=3$. 
Now, if $\gamma_{\times 2,t}(G)=3$, then $G$ has minimum degree $\delta\ge 2$ and by Theorem~\ref{Bounds(2,2,1)} we have that $\gamma_{(2,2,1)}(G)\leq \gamma_{\times 2,t}(G)=3$. Therefore, $\gamma_{(2,2,1)}(G)=3$. 
\end{proof}

Next we consider the case of graphs with $\gamma_{(2,2,1)}(G)=4$. 

\begin{theorem}\label{teo-case=4-(2,2,1)}
For a graph $G$, the following statements are equivalent.
\begin{enumerate}
\item[{\rm (i)}]  $\gamma_{(2,2,1)}(G)=4$.  
\item[{\rm (ii)}]  $\gamma_t(G)=\gamma(G)=2$ or  $\gamma_{\times 2,t}(G)=4$.
\end{enumerate}
\end{theorem}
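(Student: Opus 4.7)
The plan is to follow the case-analysis strategy used in the proof of Theorem~\ref{teo-case=4-(2,2,2)}, together with the value-$3$ characterization in Theorem~\ref{teo-case=3-(2,2,1)}. For the implication (i)$\Rightarrow$(ii), I take a $\gamma_{(2,2,1)}(G)$-function $f(V_0,V_1,V_2)$ with $\omega(f)=|V_1|+2|V_2|=4$, so $|V_2|\in\{0,1,2\}$, and handle each case. If $V_2=\varnothing$, then the constraint $f(N(v))\ge 2$ on every vertex forces $V_1$ to be a double total dominating set of cardinality $4$, hence $\gamma_{\times 2,t}(G)\le 4$; Theorem~\ref{teo-case=3-(2,2,1)} forbids $\gamma_{\times 2,t}(G)=3$ (since that would drop $\gamma_{(2,2,1)}(G)$ to $3$), so $\gamma_{\times 2,t}(G)=4$. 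If $|V_2|=2$, then $V_1=\varnothing$; each $u\in V_2$ needs $f(N(u))\ge 1$, which forces the two vertices of $V_2$ to be mutually adjacent, and each $v\in V_0$ must have a $V_2$-neighbor; so $V_2$ is a total dominating set of size $2$, giving $\gamma_t(G)=2$. Moreover $\gamma(G)=2$, because $\gamma(G)=1$ would force $\gamma_{(2,2,1)}(G)=3$ via Theorem~\ref{teo-case=3-(2,2,1)}.

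The case $|V_2|=1$ is the main technical step. Write $V_2=\{u\}$ and $V_1=\{v_1,v_2\}$. Each $v_i\in V_1$ needs $f(N(v_i))\ge 2$; the only weighted vertices are $u,v_1,v_2$ and $v_i\notin N(v_i)$, so the weight available to $v_i$ is contained in $\{u,v_{3-i}\}$, where $v_{3-i}$ alone contributes only $1$. Therefore $u\in N(v_i)$ for $i=1,2$. I then claim that $\{u,v_1\}$ is a total dominating set of size $2$: $u\sim v_1$ makes the pair totally dominate itself, $u\sim v_2$ dominates $v_2$, and every $v\in V_0$ satisfies either $u\in N(v)$ or $\{v_1,v_2\}\subseteq N(v)$ in order to achieve $f(N(v))\ge 2$, which in both subcases places a member of $\{u,v_1\}$ in $N(v)$. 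Hence $\gamma_t(G)=2$, and as above $\gamma(G)=2$.

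For the converse (ii)$\Rightarrow$(i), I would use the upper bounds of Theorem~\ref{Bounds(2,2,1)} and rule out the value $3$ via Theorem~\ref{teo-case=3-(2,2,1)}. If $\gamma_{\times 2,t}(G)=4$, then $\delta\ge 2$ and $\gamma_{(2,2,1)}(G)\le\gamma_{\times 2,t}(G)=4$; since $\gamma_{\times 2,t}(G)\ne 3$, Theorem~\ref{teo-case=3-(2,2,1)} excludes $\gamma_{(2,2,1)}(G)=3$. If $\gamma_t(G)=\gamma(G)=2$, then $\gamma_{(2,2,1)}(G)\le 2\gamma_t(G)=4$, and Theorem~\ref{teo-case=3-(2,2,1)} again rules out the value $3$ because $\gamma(G)\ne 1$.

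The delicate point, and the one that I expect to require the most care, is the $|V_2|=1$ subcase of the forward direction: the argument hinges on the observation that $w_1=2>1=\deg_{V_1\setminus\{v_i\}}(v_i)\cdot 1$, which is exactly what forces $u$ to be a common neighbour of $v_1$ and $v_2$. This is precisely the structural fact that converts a weight-$4$ function with a single vertex of weight $2$ into a total dominating set of size $2$, and thereby collapses the case analysis into the two conditions listed in (ii).
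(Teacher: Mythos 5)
Your forward direction (i)$\Rightarrow$(ii) is correct and is essentially the paper's own argument: the same trichotomy on $|V_2|$, the same extraction of a double total dominating set when $V_2=\varnothing$, and the same extraction of a total dominating set of size two when $|V_2|\in\{1,2\}$. Your $|V_2|=1$ subcase merely spells out, via the weight count $f(N(v_i))\le f(u)+f(v_{3-i})$, the adjacency of $u$ to both $v_1,v_2$ that the paper asserts tersely; that part is fine and is not, in fact, where the difficulty lies.

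The converse, however, has a genuine gap --- one you share with the paper's own proof, but a gap nonetheless. Theorem \ref{teo-case=3-(2,2,1)} characterizes $\gamma_{(2,2,1)}(G)=3$ by a \emph{disjunction}, ``$\gamma(G)=1$ or $\gamma_{\times 2,t}(G)=3$'', so to exclude the value $3$ you must refute \emph{both} disjuncts, whereas in each of your two cases you refute only one. When you assume $\gamma_{\times 2,t}(G)=4$ you exclude $\gamma_{\times 2,t}(G)=3$ but not $\gamma(G)=1$: the fan $K_1+P_5$ satisfies $\gamma_{\times 2,t}(K_1+P_5)=4$ and $\gamma(K_1+P_5)=1$, and assigning weight $2$ to the universal vertex and $1$ to an endvertex of the path shows $\gamma_{(2,2,1)}(K_1+P_5)=3\ne 4$ (the lower bound $3$ coming from Theorem \ref{Bounds(2,2,1)}). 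When you assume $\gamma_t(G)=\gamma(G)=2$ you exclude $\gamma(G)=1$ but not $\gamma_{\times 2,t}(G)=3$: the paper's own graph $G_2$ of Figure \ref{ZZZ} has $\gamma_t(G_2)=\gamma(G_2)=2$ and $\gamma_{\times 2,t}(G_2)=3$, and the paper itself records $\gamma_{(2,2,1)}(G_2)=3$. So (ii)$\Rightarrow$(i) is false as printed and no argument can close this step; the equivalence survives only after strengthening (ii) with the missing guards, e.g.\ to ``$\gamma(G)\ge 2$, $\gamma_{\times 2,t}(G)\ne 3$, and ($\gamma_t(G)=\gamma(G)=2$ or $\gamma_{\times 2,t}(G)=4$)'', in the spirit of the extra conditions that Theorem \ref{teo-case=4-(2,2,2)} carries. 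You identified the $|V_2|=1$ subcase as the delicate point, but the real danger is the converse's use of a disjunctive characterization with only one disjunct refuted.
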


\begin{proof}
Assume $\gamma_{(2,2,1)}(G)=4$. Notice that $G$ does not have isolated vertices and, by Theorem \ref{Bounds(2,2,1)}, we have that $\gamma(G)\ge 2$. Let $f(V_0,V_1,V_2)$ be a $\gamma_{(2,2,1)}(G)$-function. 
If $V_2=\varnothing$, then $V_1$ is a double total dominating set of cardinality four. Hence, $3\le \gamma_{\times 2,t}(G)\le |V_1|=4$, and Theorem \ref{teo-case=3-(2,2,1)} implies that $\gamma_{\times 2,t}(G)=4$.

From now on, assume that $|V_2|\in \{1,2\}$. If $|V_2|=2$, then $V_1=\varnothing$ and, as a result, $V_2$ is a total dominating set of $G$, which implies that $\gamma_t(G)=\gamma(G)=2$. Now, if  $|V_2|=1$, then $|V_1|=2$ and both vertices belonging to $V_1$ are adjacent to the vertex of weight two, and every $v\in V_0$ satisfies  $N(v)\cap V_2\neq \varnothing$ or $V_1\subseteq N(v)$. This implies that the union of $V_2$ with a singleton subset of $V_1$  forms a total dominating set of $G$, and again $\gamma_t(G)=\gamma(G)=2$. 

Conversely, if $\gamma_{\times 2,t}(G)=4$, then $G$ has minimum degree $\delta\ge 2$ and by Theorem  \ref{Bounds(2,2,1)} we have that $3\le \gamma_{(2,2,1)}(G)\le  \gamma_{\times 2,t}(G) = 4$. Hence, by Theorem \ref{teo-case=3-(2,2,1)} we deduce that  $\gamma_{(2,2,1)}(G)=4$.
Finally, if $\gamma_t(G)=2$, then Theorem \ref{Bounds(2,2,1)} leads to $3\le \gamma_{(2,2,1)}(G)\le 4$. Therefore, if $\gamma(G)=2$  then by  Theorem  \ref{teo-case=3-(2,2,1)} we conclude that  $\gamma_{(2,2,1)}(G)=4$.
\end{proof}

\begin{lemma}\label{Lemma-Pn-(2,2,1)} For any integer $n\ge 3$,
$$\gamma_{(2,2,1)}(P_n)\le\left\{ \begin{array}{ll}
n-\left\lfloor \frac{n}{7} \right\rfloor +1 & \text{if} \,\, n\equiv 1,2 \pmod 7,
\\[5pt]
n-\left\lfloor \frac{n}{7} \right\rfloor & otherwise.
\end{array}\right. $$ 
\end{lemma}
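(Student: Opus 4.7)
The plan is to establish the upper bound by exhibiting, for each $n \ge 3$, an explicit $(2,2,1)$-dominating function on $P_n$ whose weight matches the stated right-hand side, proceeding by induction on $n$. The central building block will be the length-$7$ pattern $B = (0,2,1,0,1,2,0)$ of weight $6$, which one verifies directly satisfies the $(2,2,1)$-dominating conditions at each of its seven positions \emph{irrespective of the weights at the two surrounding vertices} (including when they are absent, i.e., at the ends of the path).

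For the base cases $n \in \{3,4,\ldots,9\}$, I will exhibit explicit functions attaining the bound: $(0,2,1)$, $(0,2,2,0)$, $(0,2,1,2,0)$, $(0,2,1,1,2,0)$, $B$, $(0,2,1,0,1,2,2,0)$ and $(0,2,1,0,1,2,1,2,0)$, with respective weights $3,4,5,6,6,8,9$. A short vertex-by-vertex check shows each is $(2,2,1)$-dominating, and the weights coincide with the right-hand side of the inequality for the respective residues of $n$ modulo $7$.

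For the inductive step with $n \ge 10$, I start from a $(2,2,1)$-dominating function $g$ on $P_{n-7}$ of weight at most the inductive bound and extend it to $f$ on $P_n = u_1 u_2 \cdots u_n$ by setting $f = g$ on $\{u_1, \ldots, u_{n-7}\}$ and $(f(u_{n-6}), \ldots, f(u_n)) = B$. The constraint at $u_{n-7}$ is preserved because its new neighbour $u_{n-6}$ has weight $0$, and the constraints at $u_{n-6}, \ldots, u_n$ follow from the seam-independent block property of $B$. Since $n-7 \equiv n \pmod{7}$ (so the additive correction $\varepsilon \in \{0,1\}$ is the same for both) and $\lfloor (n-7)/7 \rfloor = \lfloor n/7 \rfloor - 1$, the increase of $6$ in $\omega(f)$ equals the increase of $6$ in the right-hand side of the bound, closing the induction.

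The main (mildly technical) point is the design of the block $B$: it must have weight exactly $6$, be internally $(2,2,1)$-dominating, and begin and end with a zero so that it may be appended to an arbitrary $(2,2,1)$-dominating configuration on $P_{n-7}$ without disturbing the constraint at the former leaf $u_{n-7}$. Once this $B$ is on the table, the remaining work is routine bookkeeping for the seven base cases (where $n=8,9$ need slightly longer tails that add exactly $2$ or $3$ units of weight) and the seam check in the induction.
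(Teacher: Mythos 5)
Your proposal is correct and is essentially the paper's own argument: your block $B=(0,2,1,0,1,2,0)$ is exactly the paper's $P_7$ pattern (and your $n=8$ tail is its $P_8$ pattern), and your induction appending $B$ is just a recursive rephrasing of the paper's direct concatenation of $q$ copies of the $P_7$ pattern with a remainder pattern $P_r$, with identical weight accounting $6q+r$ (plus $1$ when $r\in\{1,2\}$). The base-case functions and the seam verification you give check out, so no gap.
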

 
 \begin{proof}
First  we show how to construct a $(2,2,1)$-dominating function $f$ on $P_n$ for $n\in \{2,\dots, 8\}.$
\begin{itemize}
\item $n=2$: $f(u_1)=2$ and $f(u_2)=1$.
\item $n=3$: $f(u_1)=0$, $f(u_2)=2$ and $f(u_3)=1$.
\item $n=4$:  $f(u_1)=f(u_4)=0$ and $f(u_2)=f(u_3)=2$.
\item $n=5$:  $f(u_1)=f(u_5)=0$, $f(u_2)=f(u_4)=2$ and $f(u_3)=1$.
\item $n=6$:  $f(u_1)=f(u_6)=0$, $f(u_2)=f(u_5)=2$ and $f(u_3)=f(u_4)=1$.
\item $n=7$:  $f(u_1)=f(u_4)=f(u_7)=0$, $f(u_2)=f(u_6)=2$ and $f(u_3)=f(u_5)=1$.
\item $n=8$: $f(u_1)=f(u_4)=f(u_8)=0$, $f(u_2)=f(u_6)=f(u_7)=2$, $f(u_3)=f(u_5)=1$.
\end{itemize}
We now proceed to describe the construction of $f$ for any $n=7q+r$, where $q\ge 1$ and $0\le r \le 6$. We partition $V(P_n)=\{u_1, \dots, u_n\}$ into $q$ sets of cardinality $7$ and for $r\ge 1$  one additional set of cardinality $r$, in such a way that the subgraph induced by all these sets are paths.
 
For any $r\ne 1$, the restriction of $f$ to each of these $q$ paths of length $7$ corresponds to the weights associated above with $P_7$, while for the path of length $r$ (if any) we take the weights associated above with $P_r$. The case $r=1$ and $q\ge 2$ is slightly different, as for the first $q-1$ paths of length $7$ we take the weights associated above with $P_7$ and for the last  $8$ vertices of $P_n$ we take the weights associated above with $P_8$.

Notice that, for $n\equiv 1,2\pmod 7$, we have that $\gamma_{(2,2,1)}(P_n)\le \omega(f)=6q+r+1=n-\left\lfloor \frac{n}{7} \right\rfloor +1$, while for $n\not \equiv 1,2\pmod 7$ we have   $\gamma_{(2,2,1)}(P_n)\le \omega(f)=6q+r=n-\left\lfloor \frac{n}{7} \right\rfloor$. Therefore, the result follows.
 \end{proof}

\begin{lemma}\label{lema-1}
Let $P_7=x_1 \dots  x_7$ be a subgraph of $C_n$ and $X=\{x_1,\ldots ,x_7\}$. If  $f$ is a $(2,2,1)$-dominating function on $C_n$, then $$f(X)\geq 6.$$
\end{lemma}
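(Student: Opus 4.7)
The plan is to argue by contradiction, assuming $f(X)\le 5$. Writing $a_i:=f(x_i)$ for $i\in\{1,\ldots,7\}$, the essential feature of $X$ is that each internal vertex $x_i$ with $2\le i\le 6$ has both of its $C_n$-neighbors $x_{i-1}$ and $x_{i+1}$ inside $X$. Hence for each such $i$ the $(2,2,1)$-domination condition at $x_i$ becomes a purely local constraint on $(a_{i-1},a_i,a_{i+1})$: namely $a_{i-1}+a_{i+1}\ge 2$ if $a_i\in\{0,1\}$, and $a_{i-1}+a_{i+1}\ge 1$ if $a_i=2$. A case check shows that both situations imply the unified (but weaker) estimate $a_{i-1}+a_i+a_{i+1}\ge 2$ for every $i\in\{2,\ldots,6\}$.

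The first step would be to apply the unified bound at $i=2,4,6$ and add the three inequalities, obtaining
\begin{equation*}
(a_1+a_2+a_3)+(a_3+a_4+a_5)+(a_5+a_6+a_7)=f(X)+a_3+a_5\ge 6.
\end{equation*}
This immediately yields $f(X)\ge 6$ when $a_3=a_5=0$, so the remainder of the argument would assume $a_3+a_5\ge 1$.

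From there I would run a finite case analysis over the central triple $(a_3,a_4,a_5)\in\{0,1,2\}^3$. In each case, the constraints at $x_3$ and $x_5$ translate into explicit lower bounds on $a_2$ and $a_6$ (for instance, $a_3\in\{0,1\}$ forces $a_2+a_4\ge 2$, while $a_3=2$ only forces $a_2+a_4\ge 1$); then, whenever $a_2$ or $a_6$ is zero or small, the constraints at $x_2$ and $x_6$ propagate a lower bound to $a_1$ or $a_7$. Collecting these bounds in each case is intended to give $\sum_{i=1}^{7} a_i\ge 6$, contradicting the standing hypothesis $f(X)\le 5$.

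The main obstacle I expect is the ``heavy central'' case, where $a_3,a_5\in\{1,2\}$ so that the constraints at $x_3,x_4,x_5$ become too weak to be used directly, and the weight budget is essentially spent on $\{x_3,x_4,x_5\}$. In this subcase one must push the analysis outward: the key observation is that if $a_2=0$ (respectively $a_6=0$) then the $(2,2,1)$-condition at $x_2$ (respectively $x_6$) forces $a_1\ge 2-a_3$ (respectively $a_7\ge 2-a_5$), and a short sub-case analysis on the pair $(a_2,a_6)$ then produces the missing unit of weight needed to contradict $f(X)\le 5$.
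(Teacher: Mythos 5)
Your preparatory steps are all correct -- the reduction to purely local constraints at the internal vertices $x_2,\dots,x_6$, the unified bound $a_{i-1}+a_i+a_{i+1}\ge 2$, and the identity $(a_1+a_2+a_3)+(a_3+a_4+a_5)+(a_5+a_6+a_7)=f(X)+a_3+a_5\ge 6$ -- but the case analysis you defer to the end cannot be completed, because the lemma is in fact \emph{false}, and it fails precisely in the ``heavy central'' case you flag as the main obstacle. Take $(a_1,\dots,a_7)=(0,0,2,1,2,0,0)$, of weight $5$: the condition at $x_2$ is met by $a_1+a_3=2$, at $x_3$ (value $2$, so needing neighbour sum only $\ge 1$) by $a_2+a_4=1$, at $x_4$ by $a_3+a_5=4$, at $x_5$ by $a_4+a_6=1$, and at $x_6$ by $a_5+a_7=2$. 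Since $a_3=a_5=2$, your proposed propagation $a_1\ge 2-a_3$, $a_7\ge 2-a_5$ yields nothing, and indeed no missing unit of weight exists. This local pattern is realized by a genuine $(2,2,1)$-dominating function: on $C_9$ assign $(0,0,2,1,2,0,0,2,2)$ to the consecutive vertices $x_1,\dots,x_9$; the remaining conditions hold as well ($x_1$: $f(x_9)+f(x_2)=2$; $x_7$: $f(x_6)+f(x_8)=2$; $x_8$ and $x_9$ have value $2$ and neighbour sum $2\ge 1$). Its weight is $9$, which by Proposition \ref{teo-Pn-Cn-(2,2,1)} equals $\gamma_{(2,2,1)}(C_9)$, so this is even an optimal function, yet $f(\{x_1,\dots,x_7\})=5<6$.

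You should not conclude that you merely missed the paper's trick: the paper's own proof of Lemma \ref{lema-1} breaks on the same configuration, since its assertion that $f(\{x_1,x_2,x_3\})=2$ forces $f(\{x_4,x_5,x_6,x_7\})\ge 4$ is contradicted by the split $(0,0,2)$, $(1,2,0,0)$ with sums $2$ and $3$. What your three-triple computation does prove, once the finite check is carried out, is the sharp bound $f(X)\ge 5$, and a short exhaustive analysis (organized by the value of $a_3+a_5$) shows that equality forces exactly the palindromic pattern $(0,0,2,1,2,0,0)$, which in turn forces both vertices of $C_n$ adjacent to the window to take the value $2$. Any repair of the lower bound in Lemma \ref{Lemma-Cn-(2,2,1)} must therefore rely on such a refined statement -- or on a more careful choice of the windows, since in the example above other windows of seven consecutive vertices do carry weight at least $6$ -- rather than on Lemma \ref{lema-1} as stated. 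In short, the gap in your proposal is not a fixable omission in the sub-case analysis; it is the falsity of the statement itself.
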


\begin{proof}
Notice that $f(\{x_1,x_2,x_3\})\geq 2$ and $f(\{x_4,x_5,x_6,x_7\})\geq 3$ as $f$ is a $(2,2,1)$-dominating function. If $f(\{x_1,x_2,x_3\})\geq 3$, then we are done. Hence, we assume that $f(\{x_1,x_2,x_3\})=2$. In this case, it is not difficult to  deduce that $f(\{x_4,x_5,x_6,x_7\})\geq 4$, which implies that $f(X)\geq 6$, as desired. Therefore, the proof is complete.  
\end{proof}

\begin{lemma}\label{Lemma-Cn-(2,2,1)}
For any integer $n\geq 3$,

$$
\gamma_{(2,2,1)}(C_n)\geq\left\{ \begin{array}{ll}
             n-\lfloor\frac{n}{7}\rfloor+1 & \text{if } \,  n\equiv 1,2 \pmod 7,\\[5pt]
             n-\lfloor\frac{n}{7}\rfloor & \mbox{otherwise.}
                                  \end{array}\right.
$$
\end{lemma}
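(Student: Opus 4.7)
Let $f$ be a $\gamma_{(2,2,1)}(C_n)$-function, label the vertices cyclically as $v_0,\ldots,v_{n-1}$, and write $X_i=\{v_i,v_{i+1},\ldots,v_{i+6}\}$ (indices mod $n$). By Lemma~\ref{lema-1} each $f(X_i)\geq 6$, and since every vertex lies in exactly seven such arcs, $7\,\omega(f)=\sum_i f(X_i)\geq 6n$, giving $\omega(f)\geq\lceil 6n/7\rceil$. Writing $n=7q+r$ with $0\leq r\leq 6$, this averaged bound already equals the claimed value for $r\in\{0,3,4,5,6\}$; only the residues $r\in\{1,2\}$ need to be improved by one.

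\textbf{Case $r=1$.} Assume for contradiction that $\omega(f)=6q+1$. Then $\sum_i f(X_i)=6n+1$, so exactly one arc has $f(X_j)=7$ and all others equal $6$. The identity $f(X_{i+1})-f(X_i)=f(v_{i+7})-f(v_i)$ then shows that the shift $v_i\mapsto v_{i+7}$ preserves $f$ except for a single $+1$ and a single $-1$ transition. Since $\gcd(7,n)=1$, this shift is an $n$-cycle on the whole vertex set, so $f$ takes only two consecutive integer values and is therefore contained in $\{0,1\}$ or $\{1,2\}$. In the first case, iterating the $(2,2,1)$-condition at any vertex of value~$1$ forces $f\equiv 1$, whence $\omega(f)=n$; in the second case $\omega(f)\geq n$ directly. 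Either way $\omega(f)\geq n=7q+1>6q+1$, a contradiction.

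\textbf{Case $r=2$.} Assume for contradiction that $\omega(f)=6q+2$, so the total excess $\sum_i(f(X_i)-6)$ equals~$2$. The three possible distributions are: (b)~one arc with $f(X_j)=8$; (a1)~two consecutive arcs with $f=7$; (a2)~two non-consecutive arcs with $f=7$. In case~(b), the orbit argument forces $f\in\{0,2\}$, so $V_2$ is total dominating with $|V_2|=3q+1$; but $\lceil n/2\rceil\leq\gamma_t(C_n)$ and $\lceil n/2\rceil>3q+1$ for every $q\geq 1$, a contradiction. Case~(a1) exhibits exactly two $\pm 1$ transitions in the shift-by-seven orbit and hence reduces verbatim to case~$r=1$. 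Case~(a2) is the delicate one: the arc-sum data leave four $\pm 1$ transitions and admit configurations in which $f$ uses all three values of $\{0,1,2\}$. For each possible ordering of the four jumps around the orbit one describes the resulting plateau structure, then translates to the cycle—where two cycle-adjacent vertices occupy orbit positions at distance $7^{-1}\equiv 3q+1\pmod n$—and locates a vertex of value~$1$ whose two cycle neighbours both sit in the value-$0$ plateau, violating the $(2,2,1)$-condition.

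\textbf{Main obstacle.} Everything hinges on case~(a2). In every other sub-case, the arc-sum data alone (via the shift-by-seven orbit) pin $f$ down to at most two consecutive integer values, and a trivial global count finishes the job. Case~(a2) genuinely admits three-valued orbit configurations, so the $(2,2,1)$-condition has to be reintroduced at the local level through the orbit-to-cycle change of coordinates governed by $7^{-1}\pmod n$, which depends on $q$ and makes the case analysis the most technical part of the argument.
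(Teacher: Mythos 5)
Your averaging framework is sound and genuinely different from the paper's argument: the paper partitions $V(C_n)$ into $q$ disjoint arcs of order $7$ plus one short arc of order $r$ anchored at a vertex of $V_2$ (whose existence it first extracts from $\gamma_{(2,2,1)}(C_n)\le\gamma_{(2,2,1)}(P_n)<n$), applies Lemma~\ref{lema-1} to each long arc and bounds $f$ on the short arc by hand; you instead sum Lemma~\ref{lema-1} over all $n$ sliding windows to get $\omega(f)\ge\lceil 6n/7\rceil$ and then run a discrepancy analysis along the shift-by-$7$ orbit. Your residues $r\in\{0,3,4,5,6\}$, the whole case $r=1$, and sub-cases (b) and (a1) of $r=2$ are correct and complete (indeed, the averaging disposes of $r\in\{4,5,6\}$ without the unproved local claim $f(X)\ge r$ that the paper uses there). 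One minor omission: the window argument needs $n\ge 7$, so the base cases $3\le n\le 6$, where the claimed bound is $\gamma_{(2,2,1)}(C_n)\ge n$, must still be verified directly, as the paper does.

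However, sub-case (a2) of $r=2$ is a genuine gap, and it is twofold. First, you do not carry out the analysis: you state a program (``for each possible ordering of the four jumps \ldots\ one locates a vertex \ldots'') and then, in your closing paragraph, explicitly flag this as the delicate, most technical step --- which is precisely the step a proof must contain. Second, the configuration you propose to locate is impossible. In your orbit coordinates, each $-1$ jump of $f$ occurs exactly $s=7^{-1}\equiv 3q+1$ orbit steps after its paired $+1$ jump, because the window sum rises entering window $j$ and falls leaving it ($g(j)-g(j-1)=+1$, $g(j+1)-g(j)=-1$, and the orbit positions of these two transitions differ by $7^{-1}$). Consequently, in every three-valued configuration the two level-$1$ plateaus are exact translates of each other by $s$; since cycle-adjacent vertices sit at orbit distance $s$, every vertex of value $1$ has a cycle-neighbour of value $1$, so a value-$1$ vertex with \emph{both} neighbours in the value-$0$ plateau never exists. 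The contradiction must instead be found at a value-$1$ vertex whose other neighbour has value $0$, giving neighbour sum $0+1=1<2=w_1$; such a vertex always exists --- e.g.\ the orbit position of the second $+1$ jump has its $+s$ neighbour in the other level-$1$ plateau and its $-s$ neighbour in the level-$0$ plateau, using only $2s<n$ --- but this is exactly the verification your write-up defers. So your strategy can be completed, yet as submitted the decisive sub-case is both unproven and aimed at a configuration that cannot occur.
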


\begin{proof}
It is easy to check that $\gamma_{(2,2,1)}(C_n)=n$ for every $n\in\{3,4,5,6\}$. Now, let $n=7q+r$, with $0\leq r\leq 6$ and $q\geq 1$. Let $f(V_0,V_1,V_2)$ be a $\gamma_{(2,2,1)}(C_n)$-function.

If $r=0$, then by Lemma \ref{lema-1} we have that $\omega(f)\geq 6q=n-\lfloor\frac{n}{7}\rfloor.$  From now on we assume that $r\ge 1$. By Proposition \ref{obs-subgraph-general} and Lemma \ref{Lemma-Pn-(2,2,1)} we deduce that $\gamma_{(2,2,1)}(C_n)\le \gamma_{(2,2,1)}(P_n)<n$, which implies  that  $V_2\neq \varnothing$, otherwise there  exists $u\in V(C_n)=V_0\cup V_1$ such that  $N(u)\cap V_0\ne \varnothing$ and so $|N(u)\cap V_1|\le 1$, which is a contradiction. Let $x\in V_2$ and, without loss of generality, we can label the vertices of $C_n$ in such a way that $x=u_{1}$, and  $u_{2}\in V_1\cup V_2$ whenever $r\ge 2$. We partition $V(C_n)$ into $X=\{u_1,\ldots  ,u_r\}$ and $Y=\{u_{r+1},\ldots ,u_n\}$. Notice that Lemma \ref{lema-1} leads to $f(Y)\geq 6q$.
 
Now, if $r\in \{1,2\}$, then $f(X)\ge r+1$, which implies that $\omega(f)\ge r+1+6q=n-\lfloor\frac{n}{7}\rfloor+1$. Analogously, if $r=3$, then $f(X)\ge r$ and so 
  $\omega(f)\ge r+6q=n-\lfloor\frac{n}{7}\rfloor$. 

Finally, if $r\in \{4,5,6\}$, then as $f$ is a $(2,2,1)$-dominating function we deduce that $f(X)\ge r$, which implies that $\omega(f)\ge r+6q=n-\lfloor\frac{n}{7}\rfloor$.
\end{proof}

The following result is a direct consequence of Proposition \ref{obs-subgraph-general} and Lemmas \ref{Lemma-Pn-(2,2,1)}  and \ref{Lemma-Cn-(2,2,1)}.

\begin{proposition}\label{teo-Pn-Cn-(2,2,1)} For any integer $n\ge 3$,
$$\gamma_{(2,2,1)}(C_n)=\gamma_{(2,2,1)}(P_n)= \left\{ \begin{array}{ll}
             n-\lfloor\frac{n}{7}\rfloor+1 & \text{if } \,  n\equiv 1,2 \pmod 7,\\[5pt]
             n-\lfloor\frac{n}{7}\rfloor & \mbox{otherwise.}
                                  \end{array}\right.$$
\end{proposition}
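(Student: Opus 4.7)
The plan is to sandwich $\gamma_{(2,2,1)}(C_n)$ and $\gamma_{(2,2,1)}(P_n)$ between a common lower and upper bound equal to the expression on the right-hand side of the claim. First I would observe that $P_n$ arises as a spanning subgraph of $C_n$ by deleting any edge of the cycle, and its minimum degree $\delta'=1$ satisfies $\delta'\geq w_l/l=1/2$ for $w=(2,2,1)$. Hence Proposition~\ref{obs-subgraph-general}---or equivalently Corollary~\ref{corollaryInducedSubgraph}, since $C_n$ has a Hamiltonian path and $w_l=1\le 2=l$---will immediately give
\[
\gamma_{(2,2,1)}(C_n)\leq\gamma_{(2,2,1)}(P_n).
\]

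Next I would invoke Lemma~\ref{Lemma-Cn-(2,2,1)} for the lower bound on $\gamma_{(2,2,1)}(C_n)$ matching the target expression, and Lemma~\ref{Lemma-Pn-(2,2,1)} for exactly the same expression as an upper bound on $\gamma_{(2,2,1)}(P_n)$. Chaining the three inequalities will yield
\[
\mathrm{RHS}\;\leq\;\gamma_{(2,2,1)}(C_n)\;\leq\;\gamma_{(2,2,1)}(P_n)\;\leq\;\mathrm{RHS},
\]
forcing equality throughout and establishing both identities at once, with the case split $n\equiv 1,2\pmod 7$ versus the rest being inherited verbatim from the two lemmas.

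The substantive combinatorial work has already been absorbed by those cited lemmas: the explicit pattern-tiling construction of $(2,2,1)$-dominating functions on blocks of length $7$ (with the $+1$ correction when $n\equiv 1,2\pmod 7$) for the path upper bound, and the seven-vertex window argument based on Lemma~\ref{lema-1} together with the case analysis on the residue $r=n-7q$ for the cycle lower bound. Consequently, no genuine obstacle remains at this stage; the proposition is essentially a bookkeeping conclusion from pieces already assembled, and its proof can be written in a single short paragraph that cites the three results in the order described above.
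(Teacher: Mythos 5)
Your proposal is correct and coincides with the paper's own argument: the authors likewise deduce the proposition directly from Proposition~\ref{obs-subgraph-general} (giving $\gamma_{(2,2,1)}(C_n)\le\gamma_{(2,2,1)}(P_n)$), Lemma~\ref{Lemma-Cn-(2,2,1)} for the lower bound on cycles, and Lemma~\ref{Lemma-Pn-(2,2,1)} for the matching upper bound on paths. Your verification of the hypothesis $\delta'\ge w_l/l$ is a welcome extra detail, but the route is the same sandwich argument.
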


\subsection{Preliminary results on $(2,2,0)$-domination}
\label{SubSection(2,2,0)}

\begin{theorem}\label{Bounds(2,2,0)}
For any graph $G$ with no isolated  vertex,  order $n$ and maximum degree $\Delta$,
  $$  \left\lceil  \frac{2 n}{\Delta+1} \right\rceil\le \gamma_{(2,2,0)}(G) \le 2\gamma(G).$$
  Furthermore, if $G$ has minimum degree $\delta\geq 2$, then $$\gamma_{(2,2,0)}(G)\le\gamma_{\times 2,t}(G).$$
\end{theorem}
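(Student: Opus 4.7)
The plan is to establish the three inequalities separately, leveraging Lemma \ref{LemmaCotasINf} for the lower bound and the recent general results of Section \ref{NewDomination} for the two upper bounds.

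For the lower bound, I would take an arbitrary $\gamma_{(2,2,0)}(G)$-function $f(V_0,V_1,V_2)$ and apply Lemma \ref{LemmaCotasINf} with $w=(2,2,0)$, $w_0=w_1=2$ and $w_2=0$. This yields
\[
\Delta\,\omega(f)\ \ge\ 2n+(w_1-w_0)|V_1|+(w_2-w_0)|V_2|\ =\ 2n-2|V_2|.
\]
The key observation is then the trivial identity $\omega(f)=|V_1|+2|V_2|\ge 2|V_2|$, which lets me absorb the correction term: adding the two inequalities gives $(\Delta+1)\omega(f)\ge 2n$, hence $\gamma_{(2,2,0)}(G)=\omega(f)\ge\lceil 2n/(\Delta+1)\rceil$.

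For the first upper bound I would invoke Theorem \ref{GeneralUpperBounds(2,2,2)}(ii) with $l=2$, $i=1$ and $w_0=w_1=2$; the hypothesis $l\ge i+1\ge w_0$ reads $2\ge 2\ge 2$, which holds, and the conclusion is exactly $\gamma_{(2,2,0)}(G)\le (i+1)\gamma(G)=2\gamma(G)$. If a self-contained argument is preferred, one can construct the function $f$ assigning weight $2$ to every vertex of a $\gamma(G)$-set $S$ and weight $0$ elsewhere: any $v\in V(G)\setminus S$ has some neighbour in $S$, so $f(N(v))\ge 2$, while the condition $f(N(v))\ge 0$ on $v\in S$ is vacuous.

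For the second upper bound, under the extra hypothesis $\delta\ge 2$, I would apply Corollary \ref{CorollaryGeneralUpperBounds}(ii) with $k=2$ and $w_2=0$, which directly yields $\gamma_{(2,2,0)}(G)\le\gamma_{\times 2,t}(G)$. Again, a direct construction also works: given a $\gamma_{\times 2,t}(G)$-set $D$, the function assigning weight $1$ to each vertex of $D$ and $0$ elsewhere is $(2,2,0)$-dominating, since every vertex of $G$ has at least two neighbours in $D$.

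I do not anticipate a substantial obstacle here, as all three bounds reduce to prior machinery of the paper; the only subtlety is the lower bound, where one must notice that the $-2|V_2|$ defect coming from Lemma \ref{LemmaCotasINf} is exactly compensated by the trivial inequality $\omega(f)\ge 2|V_2|$, shifting the denominator from $\Delta+2$ (as one would get from Corollary \ref{CorollaryLowerBounds}(iv)) down to $\Delta+1$.
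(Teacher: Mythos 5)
Your proposal is correct and follows essentially the same route as the paper: the lower bound via Lemma \ref{LemmaCotasINf} with $w=(2,2,0)$, absorbing the $-2|V_2|$ defect using $\omega(f)=|V_1|+2|V_2|$ to reach the denominator $\Delta+1$ (the paper writes this as $2n\le 2n+|V_1|\le(\Delta+1)\gamma_{(2,2,0)}(G)$), and the two upper bounds by the same invocations of Theorem \ref{GeneralUpperBounds(2,2,2)}(ii) with $i=1$, $l=2$ and of Corollary \ref{CorollaryGeneralUpperBounds}(ii). Your added self-contained constructions (weight $2$ on a $\gamma(G)$-set, weight $1$ on a $\gamma_{\times 2,t}(G)$-set) are exactly what those general results do internally, so there is no substantive difference.
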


\begin{proof} 
The upper bound $\gamma_{(2,2,0)}(G) \le \omega(g)= 2\gamma(G)$ is derived by we applying 
Theorem~\ref{GeneralUpperBounds(2,2,2)} (ii) for $i=1$ and $l=2$.  Furthermore, if $G$ has minimum degree $\delta\geq 2$, then by
Corollary \ref{CorollaryGeneralUpperBounds} (ii) we have that  $\gamma_{(2,2,0)}(G)\le\gamma_{\times 2,t}(G).$

Now, let $f(V_0,V_1,V_2)$ be a $\gamma_{(2,2,0)}(G)$-function.
From Lemma \ref{LemmaCotasINf} we deduce  that $  
2(n-|V_2|))\le \Delta\gamma_{(2,2,0)}(G),$ which implies that $2n\le 
2n+|V_1|\le (\Delta+1)\gamma_{(2,2,0)}(G).$
Therefore, the result follows.
\end{proof}
 
Theorem \ref{Bounds(2,2,0)} implies that, 
if $\gamma(G)=\frac{n}{\Delta+1}$, then $\gamma_{(2,2,0)}(G)=\frac{2n}{\Delta+1}.$ It is easy to see that a graph satisfies $\gamma(G)=\frac{n}{\Delta+1}$ if and only if there exists a $\gamma(G)$-set $S$ which is a $2$-packing\footnote{A set $S\subseteq V(G)$ is a $2$-packing if $N[u]\cap N[v]=\varnothing$ for every pair of different vertices $u,v\in S$.} and every vertex in $S$ has degree $\Delta$.
The upper bound $\gamma_{(2,2,0)}(G) \le 2\gamma(G)$ is achieved for the graph
$G$ shown in Figure \ref{FigRaro}, which satisfies  $\gamma_{(2,2,0)}(G)=2\gamma(G)=6$. Furthermore, by Theorem \ref{IgualdadCorona} we have that for any corona graph $G\cong G_1\odot G_2$, where $G_1$ does not have isolated vertices, $\gamma_{(2,2,0)}(G)=2\gamma(G)$.

As shown in Theorem \ref{teo-case=2}, for a graph $G$,  $\gamma_{(2,2,0)}(G)=2$  if and only if $\gamma(G)=1$.
Now we consider the case $\gamma_{(2,2,0)}(G)=3$.

\begin{theorem}\label{teo-case=3-(2,2,0)}
For a graph $G$, $\gamma_{(2,2,0)}(G)=3$  if and only if $\gamma_{\times 2,t}(G)=\gamma(G)+1=3$. 
\end{theorem}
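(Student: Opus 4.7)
The plan is to prove both implications by a short structural analysis of a minimum $(2,2,0)$-dominating function, using Theorems~\ref{teo-case=2} and~\ref{Bounds(2,2,0)} to close the argument.

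For the forward direction, I would start with a $\gamma_{(2,2,0)}(G)$-function $f(V_0,V_1,V_2)$ of weight $3$. Because $2|V_2|+|V_1|=3$, there are only two possibilities: $(|V_2|,|V_1|)=(1,1)$ or $(|V_2|,|V_1|)=(0,3)$. The first case can be ruled out as follows: write $V_2=\{u\}$ and $V_1=\{v\}$. For every $x\in V_0$ the inequality $f(N(x))\ge 2$ forces $u\in N(x)$, because the only other positive weight is $f(v)=1$; and $v\in V_1$ also needs $f(N(v))\ge 2$, which forces $v$ to be adjacent to $u$. Thus $\{u\}$ dominates $G$, so $\gamma(G)=1$, and by Theorem~\ref{teo-case=2} this would force $\gamma_{(2,2,0)}(G)=2$, contradicting the assumption.

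This leaves $V_2=\varnothing$ and $V_1=\{a,b,c\}$. Every vertex of $G$ must have at least two neighbors in $V_1$; in particular each of $a,b,c$ has at least two neighbors inside $\{a,b,c\}$ minus itself, a set of only two elements, so $\{a,b,c\}$ induces a triangle. Hence $V_1$ is a double total dominating set of size $3$, giving $\gamma_{\times 2,t}(G)\le 3$. Moreover, $\{a,b\}$ dominates $G$: the vertex $c$ is adjacent to both by the triangle, and every other vertex, having at least two neighbors in $\{a,b,c\}$, has at least one in $\{a,b\}$. Therefore $\gamma(G)\le 2$, and Theorem~\ref{teo-case=2} rules out $\gamma(G)=1$, so $\gamma(G)=2$. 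Since no double total dominating set can have size $\le 2$ (no vertex is adjacent to itself), $\gamma_{\times 2,t}(G)\ge 3$, and equality holds.

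For the backward direction, assume $\gamma(G)=2$ and $\gamma_{\times 2,t}(G)=3$. The existence of a double total dominating set forces $\delta\ge 2$, so Theorem~\ref{Bounds(2,2,0)} gives $\gamma_{(2,2,0)}(G)\le\gamma_{\times 2,t}(G)=3$. A one-vertex function cannot be $(2,2,0)$-dominating, so $\gamma_{(2,2,0)}(G)\ge 2$, and Theorem~\ref{teo-case=2} excludes $\gamma_{(2,2,0)}(G)=2$ because $\gamma(G)\ne 1$; hence $\gamma_{(2,2,0)}(G)=3$. The only delicate step is in the forward direction: noticing that, with $V_2$ empty, the $w$-domination condition applied to the vertices of $V_1$ themselves already forces $\{a,b,c\}$ to be a triangle, which is the single structural fact that yields both $\gamma_{\times 2,t}(G)\le 3$ and $\gamma(G)\le 2$ at once.
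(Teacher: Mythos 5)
Your proof is correct and follows essentially the same route as the paper: a case analysis on a minimum $(2,2,0)$-dominating function of weight $3$, ruling out the case $|V_2|=1$ via Theorem~\ref{teo-case=2} and closing the converse with the bound $\gamma_{(2,2,0)}(G)\le\gamma_{\times 2,t}(G)$ from Theorem~\ref{Bounds(2,2,0)}. The only cosmetic difference is that where the paper invokes the inequality $\gamma(G)+1\le\gamma_{\times 2,t}(G)$ to force both equalities at once, you extract the same conclusions directly from your observation that $V_1$ must induce a triangle.
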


\begin{proof}
Assume $\gamma_{(2,2,0)}(G)=3$. By Theorem \ref{teo-case=2} we have that $\gamma(G)\ge 2$. Let $f(V_0,V_1,V_2)$ be a $\gamma_{(2,2,0)}(G)$-function. If $|V_2|=1$ then $|V_1|=1$, and as $f$ is a $(2,2,0)$-dominating function we deduce that $N[V_2]=V(G)$, i.e., $\gamma(G)=1$, which is a contradiction. Thus, $V_2=\varnothing$ and $|V_1|=3$. 
Notice that $V_1$ is a double total dominating set and since $\gamma(G)\ge 2$, it follows that $3\leq\gamma(G)+1\leq \gamma_{\times 2,t}(G)\le |V_1|=3$. Hence, $\gamma_{\times 2,t}(G)=\gamma(G)+1=3$, as required.

Conversely, assume $\gamma_{\times 2,t}(G)=\gamma(G)+1=3$. Since $G$ has minimum degree at least two, Theorem~\ref{Bounds(2,2,0)}
leads to $2\leq \gamma_{(2,2,0)}(G)\leq \gamma_{\times 2,t}(G)=3$, and so Theorem  \ref{teo-case=2} implies that $\gamma_{(2,2,0)}(G)=3$, which completes the proof.
\end{proof}

\begin{theorem}\label{teo-case=4-(2,2,0)}
For a graph $G$, $\gamma_{(2,2,0)}(G)=4$  if and only if
one of the following conditions holds.
\begin{enumerate}
\item[{\rm (i)}] $G\cong K_1\cup G_1$, where $G_1$ is a graph with $\gamma(G_1)=1$.
\item[{\rm (ii)}] $\gamma_{\times 2,t}(G)=4$.
\item[{\rm (iii)}] $\gamma(G)=2$ and $G$ has minimum degree one.
\item[{\rm (iv)}] $\gamma(G)=2$ and $\gamma_{\times 2,t}(G)\geq 4$.
\end{enumerate}
\end{theorem}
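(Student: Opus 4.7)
The plan is to adapt the template of Theorem \ref{teo-case=4-(2,2,1)} to the parameter $w=(2,2,0)$, the novelty being that the absence of any constraint on vertices in $V_2$ allows isolated vertices of $G$ to receive weight $2$, producing case (i). I would fix a $\gamma_{(2,2,0)}(G)$-function $f(V_0,V_1,V_2)$ of weight $4$ and split the forward direction according to $|V_2|\in\{0,1,2\}$. When $|V_2|=0$, $V_1$ has cardinality $4$ and the condition $f(N(v))\ge 2$ for every $v\in V(G)$ says exactly that $V_1$ is a double total dominating set, so $\gamma_{\times 2,t}(G)\le 4$; combined with the reverse inequality from Theorem \ref{Bounds(2,2,0)}, condition (ii) holds.

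When $|V_2|=1$, writing $V_2=\{u\}$ and $V_1=\{v_1,v_2\}$, the constraint $f(N(v_i))\ge 2$ forces $v_i\sim u$, and then the standard argument used in the proof of Theorem \ref{teo-case=4-(2,2,1)} shows $\{u,v_1\}$ is a total dominating set; since Theorem \ref{teo-case=2} excludes $\gamma(G)=1$, one gets $\gamma_t(G)=\gamma(G)=2$. When $|V_2|=2$ and $V_1=\emptyset$, writing $V_2=\{u_1,u_2\}$, every $w\in V_0$ is adjacent to at least one of $u_1,u_2$; if some $u_i$ is isolated in $G$, a short case analysis (``both isolated'' giving $G=2K_1$, ``only one isolated'' giving $u_{3-i}$ universal in $G-u_i$) forces $G\cong K_1\cup G_1$ with $\gamma(G_1)=1$, yielding (i); otherwise $\{u_1,u_2\}$ dominates $G$, and Theorem \ref{teo-case=2} again gives $\gamma(G)=2$. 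In the subcases with $\gamma(G)=2$ and no isolated vertex I would split on $\delta(G)$: if $\delta=1$ this is (iii), while if $\delta\ge 2$ then Theorem \ref{Bounds(2,2,0)} yields $\gamma_{\times 2,t}(G)\ge\gamma_{(2,2,0)}(G)=4$, which is (iv).

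For the converse I would verify each condition separately. In (i), the isolated vertex is forced to receive weight $2$, and the restriction of any $\gamma_{(2,2,0)}(G)$-function to $G_1$ is itself a $(2,2,0)$-dominating function of weight at least $\gamma_{(2,2,0)}(G_1)=2$ (by Theorem \ref{teo-case=2}); the matching upper bound comes from placing weight $2$ on a universal vertex of $G_1$. For (ii), (iii), (iv), the upper bound $\gamma_{(2,2,0)}(G)\le 4$ follows from Theorem \ref{Bounds(2,2,0)} (using either $\gamma_{(2,2,0)}(G)\le\gamma_{\times 2,t}(G)$ when $\delta\ge 2$, or $\gamma_{(2,2,0)}(G)\le 2\gamma(G)$), and the lower bound $\gamma_{(2,2,0)}(G)\ge 4$ combines Theorem \ref{teo-case=2} (ruling out the value $2$, using $\gamma(G)\ne 1$) with Theorem \ref{teo-case=3-(2,2,0)} (ruling out the value $3$, using $\gamma_{\times 2,t}(G)\ne 3$).

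The main obstacle I anticipate is the bookkeeping in case (i). The slot $w_2=0$ removes the constraint at vertices in $V_2$, so an isolated vertex can be absorbed into $V_2$ at cost exactly $2$; the subcase in which both elements of $V_2$ are isolated corresponds to $G\cong 2K_1$, which must be recognized as an instance of (i) with $G_1=K_1$ rather than a degenerate or excluded configuration. This case has no analogue in the $(2,2,1)$ and $(2,2,2)$ settings treated by Theorems \ref{teo-case=4-(2,2,1)} and \ref{teo-case=4-(2,2,2)}, so it is the step where the proof genuinely departs from those earlier arguments.
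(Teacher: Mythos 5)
Your proposal is correct and takes essentially the same route as the paper: the forward direction splits on $|V_2|\in\{0,1,2\}$ for a weight-$4$ $\gamma_{(2,2,0)}(G)$-function exactly as in the paper's proof (double total dominating set when $V_2=\varnothing$, the ``adjacent to the weight-two vertex'' argument when $|V_2|=1$, domination by $V_2$ when $|V_2|=2$, then splitting on $\delta$ via Theorem \ref{Bounds(2,2,0)}), and the converse combines Theorem \ref{Bounds(2,2,0)} with Theorems \ref{teo-case=2} and \ref{teo-case=3-(2,2,0)} just as the paper does. The only organizational difference is that the paper dispatches $K_1$-components upfront while you absorb isolated vertices into the $|V_2|=2$ subcase, and in the converse for condition (ii) you invoke $\gamma(G)\neq 1$ exactly as the paper implicitly does, so the step is faithful to (and no weaker than) the published argument.
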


\begin{proof}If $K_1$ is a component of $G$, then by  Theorem \ref{teo-case=2} we conclude that 
$\gamma_{(2,2,0)}(G)=4$  if and only if
 $G\cong K_1\cup G_1$, where $G_1$ is a graph with $\gamma(G_1)=1$.

 From now on, we consider the case where $G$ is a graph  with no isolated vertex.
Assume $\gamma_{(2,2,0)}(G)=4$ and let $f(V_0,V_1,V_2)$ be a $\gamma_{(2,2,0)}(G)$-function. If $V_2=\varnothing$, then $V_1$  is a double total dominating set of $G$. In this case, $G$ has minimum degree $\delta\ge 2$ and by Theorem~\ref{Bounds(2,2,0)} we have  that $\gamma_{\times 2,t}(G)\leq |V_1|=4=\gamma_{(2,2,0)}(G)\leq \gamma_{\times 2,t}(G)$. Hence  (ii) follows. 

Now, assume that $|V_2|\in \{1,2\}$. 
If $|V_2|=2$, then $V_1=\varnothing$, and so $\gamma(G)\le 2$. 
Now, if  $|V_2|=1$, then $|V_1|=2$ and both vertices belonging $V_1$ are adjacent to the vertex of weight two, and every $v\in V_0$ satisfies  $N(v)\cap V_2\neq \varnothing$ or $V_1\subseteq N(v)$. This implies that the union of $V_2$ with a singleton subset of $V_1$  forms a dominating set of $G$, and again $\gamma(G)\le 2$. 
Thus, from Theorem \ref{teo-case=2} we deduce that $\gamma(G)=2$.  Furthermore, if  $\delta\ge 2$, then by  Theorem~\ref{Bounds(2,2,0)} we have   that $\gamma_{\times 2,t}(G)\geq  \gamma_{(2,2,0)}=4$. Therefore, either (iii) or  (iv) holds.

Conversely, if $\gamma_{\times 2,t}(G)=4$, then Theorem~\ref{Bounds(2,2,0)} leads to $2 \le \gamma_{(2,2,0)}\le  \gamma_{\times 2,t}(G)=4$. Hence, by Theorems \ref{teo-case=2} and \ref{teo-case=3-(2,2,0)} we deduce that  $\gamma_{(2,2,0)}(G)=4$. Analogously, if $\gamma(G)=2$ and $\delta\ge 1$, then
  Theorem~\ref{Bounds(2,2,0)} leads to $2 \le \gamma_{(2,2,0)}\le  2\gamma(G)=4$. Thus, by Theorem  \ref{teo-case=2} we have that $3 \le \gamma_{(2,2,0)}\le 4$. In particular, if $\delta=1$ or $\gamma_{\times 2,t}(G)\ge 4$, then 
  Theorem \ref{teo-case=3-(2,2,0)} leads to 
   $\gamma_{(2,2,0)}(G)=4$,  which completes the proof.
\end{proof}

 \begin{lemma}\label{lem-(2,2,0)}
For a graph $G$, the following statements are equivalent.
\begin{enumerate}
\item[{\rm (i)}] $\gamma_{(2,2,0)}(G)=2\gamma(G)$.
\item[{\rm (ii)}] There exists a $\gamma_{(2,2,0)}(G)$-function $f(V_0,V_1,V_2)$ such that $V_1=\varnothing$.
\end{enumerate}
\end{lemma}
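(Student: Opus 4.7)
The plan is to prove the two implications by a direct construction in one direction and a straightforward lower-bound argument in the other, leveraging Theorem~\ref{Bounds(2,2,0)} as the matching upper bound.

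For the forward direction, I would assume $\gamma_{(2,2,0)}(G)=2\gamma(G)$ and exhibit a $\gamma_{(2,2,0)}(G)$-function with $V_1=\varnothing$. The natural candidate is to take any $\gamma(G)$-set $S$ and define $f$ by $f(v)=2$ for $v\in S$ and $f(v)=0$ otherwise. Since every vertex $v\notin S$ has at least one neighbor in $S$ (because $S$ dominates $G$), we obtain $f(N(v))\ge 2$, while vertices in $V_2=S$ require no condition since $w_2=0$. Hence $f$ is a $(2,2,0)$-dominating function with $\omega(f)=2\gamma(G)=\gamma_{(2,2,0)}(G)$, so it is a $\gamma_{(2,2,0)}(G)$-function with $V_1=\varnothing$.

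For the reverse direction, I would suppose a $\gamma_{(2,2,0)}(G)$-function $f(V_0,V_1,V_2)$ with $V_1=\varnothing$ exists. For each $v\in V_0$, the inequality $f(N(v))\ge 2$ forces a neighbor of $v$ in $V_2$ (since $V_1=\varnothing$, every positive contribution to $f(N(v))$ comes from $V_2$). Hence $V_2$ is a dominating set of $G$, giving $\gamma(G)\le |V_2|$. Then
\[
\gamma_{(2,2,0)}(G)=\omega(f)=2|V_2|\ge 2\gamma(G).
\]
Combining this with the upper bound $\gamma_{(2,2,0)}(G)\le 2\gamma(G)$ from Theorem~\ref{Bounds(2,2,0)} yields the claimed equality.

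There is no real obstacle here: the only subtlety is making sure Theorem~\ref{Bounds(2,2,0)}'s hypothesis (no isolated vertex) is addressed, but an isolated vertex $v$ must lie in $V_2$ for any $(2,2,0)$-dominating function (otherwise $f(N(v))=0<2$) and the construction in the forward direction still works verbatim for graphs with isolated vertices, so the equivalence holds in full generality.
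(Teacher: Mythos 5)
Your proof is correct and takes essentially the same route as the paper: the identical construction (value $2$ on a $\gamma(G)$-set, $0$ elsewhere) for (i)$\Rightarrow$(ii), and for the converse the same observation that $V_2$ dominates, giving the chain $\gamma_{(2,2,0)}(G)\le 2\gamma(G)\le 2|V_2|=\omega(f)=\gamma_{(2,2,0)}(G)$. Your closing remark handling isolated vertices (which the hypothesis of Theorem~\ref{Bounds(2,2,0)} excludes) is a small extra point of care that the paper's proof glosses over, but it does not change the argument.
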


\begin{proof}
First, we assume that $\gamma_{(2,2,0)}(G)=2\gamma(G)$ and let $D$ be a $\gamma(G)$-set. Hence, the function $f(V_0,V_1,V_2)$, defined by $V_2=D$ and $V_0=V(G)\setminus D$, is a $\gamma_{(2,2,0)}(G)$-function which satisfies (ii), as desired.

Finally, we assume that there exists a $\gamma_{(2,2,0)}(G)$-function $f(V_0,V_1,V_2)$ such that $V_1=\varnothing$. This implies that $V_2$ is a dominating set of $G$. Hence, $\gamma_{(2,2,0)}(G)\leq 2\gamma(G)\leq 2|V_2|=\gamma_{(2,2,0)}(G)$, and the desired equality holds, which completes the proof.    
\end{proof}

  The following result provides the $(2,2,0)$-domination number of paths and cycles.  
 
 \begin{proposition}\label{teo-Cn-Pn-(2,2,0)}
For any integer $n\geq 3$,
$$\gamma_{(2,2,0)}(P_n)=\gamma_{(2,2,0)}(C_n)=2\left\lceil \frac{n}{3}\right\rceil.$$
\end{proposition}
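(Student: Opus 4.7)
My plan is to sandwich both parameters between the value $2\lceil n/3\rceil$. The upper bounds are immediate from results already in the paper: Theorem~\ref{Bounds(2,2,0)} combined with the classical identity $\gamma(P_n)=\lceil n/3\rceil$ yields $\gamma_{(2,2,0)}(P_n)\le 2\gamma(P_n)=2\lceil n/3\rceil$, and applying Proposition~\ref{obs-subgraph-general} with $P_n$ viewed as a spanning subgraph of $C_n$ (note $w_l/l=0\le 1=\delta(P_n)$) gives $\gamma_{(2,2,0)}(C_n)\le \gamma_{(2,2,0)}(P_n)\le 2\lceil n/3\rceil$.

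The content is the matching lower bound $\gamma_{(2,2,0)}(C_n)\ge 2\lceil n/3\rceil$. I would fix an arbitrary $(2,2,0)$-dominating function $f(V_0,V_1,V_2)$ on $C_n$ and write $k_j=|V_j|$, so $\omega(f)=k_1+2k_2$. The key identity is a double count of $\sum_v f(N(v))$: on the one hand it equals $\sum_u\deg(u)f(u)=2\omega(f)$ since $C_n$ is $2$-regular, while on the other hand it is at least $\sum_{v\in V_0\cup V_1}f(N(v))\ge 2(n-k_2)$. Rearranging gives the basic inequality $k_1+3k_2\ge n$. A short case analysis of the integer LP $\min(k_1+2k_2)$ subject to $k_1+3k_2\ge n$ and $k_1,k_2\ge 0$ immediately yields $\omega(f)\ge 2k=2\lceil n/3\rceil$ when $n=3k$ and $\omega(f)\ge 2k+2=2\lceil n/3\rceil$ when $n=3k+2$, so in these two residue classes the lower bound is automatic.

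The delicate case, and the main obstacle, is $n=3k+1$: here the LP only yields $\omega(f)\ge 2k+1$, and this bound is attained uniquely by $(k_1,k_2)=(1,k)$. To rule out this configuration I would exploit the equality case of the averaging. If $\omega(f)=2k+1$ with these parameters, the whole chain $2\omega(f)=\sum_v f(N(v))\ge 2(n-k_2)=2\omega(f)$ is tight, which forces $f(N(v))=2$ for every $v\in V_0\cup V_1$ and $f(N(v))=0$ for every $v\in V_2$. The latter says that each vertex of $V_2$ has both its neighbors in $V_0$. Letting $u$ denote the unique vertex with $f(u)=1$, the two neighbors of $u$ have $f$-values summing to exactly $2$; since $|V_1|=1$, these values cannot be $\{1,1\}$, so they must be $\{0,2\}$. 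Thus $u$ is adjacent to some vertex of $V_2$, contradicting the fact that every $V_2$-vertex has only $V_0$-neighbors.

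Combining the three residue cases gives $\gamma_{(2,2,0)}(C_n)\ge 2\lceil n/3\rceil$, which together with the earlier chain $\gamma_{(2,2,0)}(C_n)\le \gamma_{(2,2,0)}(P_n)\le 2\lceil n/3\rceil$ forces equality throughout. Everything except the $n\equiv 1\pmod 3$ argument is routine; the only nontrivial step is the extraction of structural information from the tight equality case of the averaging.
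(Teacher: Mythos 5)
Your proof is correct, and its core count---$2\omega(f)=\sum_{v}f(N(v))\ge 2(n-|V_2|)$ on the $2$-regular cycle---is exactly the specialization of the paper's Lemma~\ref{LemmaCotasINf}, while your upper-bound chain via Theorem~\ref{Bounds(2,2,0)} and Proposition~\ref{obs-subgraph-general} is the same as the paper's. Where you genuinely diverge is in how the lower bound is organized. The paper splits on whether a $\gamma_{(2,2,0)}(C_n)$-function has $V_1=\varnothing$: if so, Lemma~\ref{lem-(2,2,0)} gives $\gamma_{(2,2,0)}(C_n)=2\gamma(C_n)$ at once; if not, it deduces $|V_2|\le\lceil n/3\rceil-1$ and applies Lemma~\ref{LemmaCotasINf} to get $\omega(f)\ge n-\lceil n/3\rceil+1$. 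You instead treat $k_1+3k_2\ge n$ as an integer program over the residue classes of $n$ and, in the critical case $n\equiv 1\pmod 3$, exploit the equality case of the double count: tightness forces every $V_2$-vertex to have both neighbours in $V_0$ while the unique $V_1$-vertex must have a neighbour in $V_2$, a contradiction. This extra step is not merely stylistic. The paper's closing inequality $n-\lceil n/3\rceil+1\ge 2\lceil n/3\rceil$ fails precisely when $n\equiv 1\pmod 3$ (for $n=4$ it reads $3\ge 4$), so the printed argument only yields $\omega(f)\ge 2k+1$ for $n=3k+1$; and parity via Lemma~\ref{lem-(2,2,0)} does not rescue it, since a hypothetical odd-weight minimizer would necessarily have $V_1\ne\varnothing$ and so escape that lemma. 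Your equality-case analysis closes exactly this gap, ruling out the unique LP-optimal configuration $(|V_1|,|V_2|)=(1,k)$; in that sense your route buys a complete proof where the paper's, as written, is not, at the cost of a slightly longer case discussion.
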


\begin{proof}
We first prove that $\gamma_{(2,2,0)}(C_n)\geq 2\left\lceil \frac{n}{3}\right\rceil.$ Let $f(V_0,V_1,V_2)$ be a $\gamma_{(2,2,0)}(C_n)$-function. If $V_1=\varnothing$, then by Lemma \ref{lem-(2,2,0)} it follows that $\gamma_{(2,2,0)}(C_n)=2\gamma(C_n)=2\left\lceil \frac{n}{3}\right\rceil.$ If $V_1\neq \varnothing$, then $1+2|V_2|\leq |V_1|+2|V_2|=\gamma_{(2,2,0)}(C_n)\leq 2\gamma(C_n)=2\left\lceil \frac{n}{3}\right\rceil$, which leads to $|V_2|\leq \left\lceil \frac{n}{3}\right\rceil-1$. By Lemma \ref{LemmaCotasINf} we have that $\gamma_{(2,2,0)}(C_n)\geq n-|V_2|\geq n-\left\lceil \frac{n}{3}\right\rceil+1\geq 2\left\lceil \frac{n}{3}\right\rceil $, as desired.

Therefore, by the inequality above, Proposition \ref{obs-subgraph-general} and Theorem \ref{Bounds(2,2,0)} we deduce that $2\lceil\frac{n}{3}\rceil\leq \gamma_{(2,2,0)}(C_n)\leq \gamma_{(2,2,0)}(P_n)\leq 2\gamma(P_n)=2\lceil\frac{n}{3}\rceil$. Thus, we have equalities in the inequality chain above, which implies that the result follows. 
\end{proof}

\subsection{Preliminary results on $(2,1,0)$-domination}\label{SubSection(2,1,0)}

Given a graph $G$,  we use the notation $L(G)$ and $S(G)$ for the sets of leaves and support vertices, respectively. 

\begin{theorem}\label{Bounds(2,1,0)}
For any graph $G$ with no isolated  vertex,  order $n$ and maximum degree $\Delta$, 
  $$  \left\lceil  \frac{2n}{\Delta+1} \right\rceil\le \gamma_{(2,1,0)}(G) \le \min\{\gamma_{\times 2}(G)-|L(G)|+|S(G)|,2\gamma(G)\}.$$
\end{theorem}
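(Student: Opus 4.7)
The plan is to establish the lower bound by a direct appeal to previously proven material and to establish the two upper bounds by explicit constructions of $(2,1,0)$-dominating functions.

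For the lower bound $\lceil 2n/(\Delta+1)\rceil \le \gamma_{(2,1,0)}(G)$, I would simply invoke Corollary~\ref{CorollaryLowerBounds}(i) with $k=1$ and $l=2$: in that case the associated vector $(k+l-1,k+l-2,k-1)$ equals $(2,1,0)$, the hypothesis $k\le l\delta+1$ reduces to $1\le 2\delta+1$ which always holds, and the conclusion is exactly $\gamma_{(2,1,0)}(G)\ge \lceil 2n/(\Delta+1)\rceil$. Alternatively, this also drops out of Lemma~\ref{LemmaCotasINf} applied to any $\gamma_{(2,1,0)}(G)$-function.

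For the upper bound $\gamma_{(2,1,0)}(G)\le 2\gamma(G)$, I would take a $\gamma(G)$-set $D$ and define $f$ by $f(v)=2$ for $v\in D$ and $f(v)=0$ otherwise. Every $v\in V_0=V(G)\setminus D$ has at least one neighbour in $D$, so $f(N(v))\ge 2$; vertices in $V_2=D$ carry no constraint since $w_2=0$. Thus $f$ is a $(2,1,0)$-dominating function of weight $2|D|=2\gamma(G)$. (This could also be cited directly from Theorem~\ref{GeneralUpperBounds(2,2,2)}(ii) with $i=1$, $l=2$.)

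The more delicate part, and the main obstacle, is the bound $\gamma_{(2,1,0)}(G)\le \gamma_{\times 2}(G)-|L(G)|+|S(G)|$. I would start with a $\gamma_{\times 2}(G)$-set $D$ (which exists since $G$ has no isolated vertex) and use two structural observations: every leaf must lie in $D$ (its closed neighbourhood has size $2$) and consequently every support vertex must lie in $D$, so $L(G)\cup S(G)\subseteq D$ and these two sets are disjoint. I would then define
$$f(v)=\begin{cases} 2 & \text{if } v\in S(G),\\ 1 & \text{if } v\in D\setminus (L(G)\cup S(G)),\\ 0 & \text{otherwise.}\end{cases}$$
Its weight is $2|S(G)|+(|D|-|L(G)|-|S(G)|)=\gamma_{\times 2}(G)-|L(G)|+|S(G)|$, so it remains to verify that $f$ is a $(2,1,0)$-dominating function.

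The verification splits into three cases. If $v\in L(G)$, then $v\in V_0$ and its unique neighbour lies in $S(G)$, contributing $2$. If $v\in V(G)\setminus D$, then $v$ cannot be adjacent to any leaf (otherwise $v\in S(G)\subseteq D$), so every neighbour of $v$ in $D$ lies in $D\setminus L(G)$ and carries weight at least $1$; since $|N(v)\cap D|\ge 2$ by double domination, we obtain $f(N(v))\ge 2$. Finally, if $v\in V_1=D\setminus(L(G)\cup S(G))$, then $|N(v)\cap D|\ge 1$ and any such neighbour $u$ cannot be a leaf (otherwise $v\in S(G)$), hence $f(u)\ge 1$, yielding $f(N(v))\ge 1$. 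The key subtlety throughout is precisely this interplay between leaves, supports, and the double-domination condition, which is what forces the correction term $-|L(G)|+|S(G)|$ rather than a plain $\gamma_{\times 2}(G)$.
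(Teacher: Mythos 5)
Your proof is correct and follows essentially the same route as the paper: the lower bound via Lemma~\ref{LemmaCotasINf} (equivalently Corollary~\ref{CorollaryLowerBounds}~(i)), and for the main upper bound the identical construction $V_2=S(G)$, $V_1=D\setminus(L(G)\cup S(G))$ from a $\gamma_{\times 2}(G)$-set $D$, your case analysis merely spelling out the verification the paper leaves implicit (for $\gamma_{(2,1,0)}(G)\le 2\gamma(G)$ the paper cites Remark~\ref{REmarkMonotonicity} together with Theorem~\ref{Bounds(2,2,0)} instead of your direct construction, a negligible difference). One small caveat you share with the paper: the claim that $L(G)$ and $S(G)$ are disjoint fails exactly on $K_2$-components, where the constructed function has weight $\gamma_{\times 2}(G)-|L(G)|+|S(G)|+|L(G)\cap S(G)|$, so such components should be handled separately (e.g., assign weight $1$ to both vertices), although the paper's own weight count silently makes the same assumption and the stated inequality remains true.
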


\begin{proof} 
If $f(V_0,V_1,V_2)$ is a $\gamma_{(2,1,0)}(G)$-function, then from  Lemma \ref{LemmaCotasINf} we conclude that $2n-|V_1|-2|V_2|\le  
 \Delta\gamma_{(2,1,0)}(G).$ Hence, 
$2n\le \Delta\gamma_{(2,1,0)}(G)+\omega(f)=(\Delta+1)\gamma_{(2,1,0)}(G).$
Therefore, the lower bound  follows.

Let $D$ be a $\gamma_{\times 2}(G)$-set. Notice that $S(G)\cup L(G)\subseteq D$. Since $|N[v]\cap D |\ge 2$ for every $v\in V(G)$, the function $g(V_0,V_1,V_2)$ defined by $V_1=D\setminus (L(G)\cup S(G))$ and $V_2=S(G)$, is a $(2,1,0)$-dominating function. Hence, $\gamma_{(2,1,0)}(G) \le \omega(g)= \gamma_{\times 2}(G)-|L(G)|+|S(G)|$.

By Remark \ref{REmarkMonotonicity},  $\gamma_{(2,1,0)}(G)\le \gamma_{(2,2,0)}(G)$, hence the upper bound $\gamma_{(2,1,0)}(G) \le 2\gamma(G)$  is derived from  
 Theorem  \ref{Bounds(2,2,0)}.
Therefore, $\gamma_{(2,1,0)}(G)\leq \min\{\gamma_{\times 2}(G)-|L(G)|+|S(G)|,2\gamma(G)\}$.
\end{proof}
 
 The bounds above are tight. For instance, for the  graph $G_1$ shown in Figure \ref{ZZZ} we have that $ \gamma_{(2,1,0)}(G_1) = \left\lceil  \frac{2n}{\Delta+1} \right\rceil=\gamma_{\times 2}(G_1)=2\gamma(G_1)=4.$  As an example of graph of minimum degree one where $\gamma_{(2,1,0)}(G) =\gamma_{\times 2}(G)-|L(G)|+|S(G)|$ we take the graph  $G$ obtained from a  star graph $K_{1,r}$, $r\ge 3$, by subdividing one edge just once. In such a case, $\gamma_{(2,1,0)}(G)=4 =\gamma_{\times 2}(G)-|L(G)|+|S(G)|$. Another example is the graph shown in Figure \ref{FigRaro} which satisfies $\gamma_{(2,1,0)}(G) =\gamma_{\times 2}(G)-|L(G)|+|S(G)|=6$.
 
 Notice that $\gamma_{(2,1,0)}(G)\ge \left\lceil  \frac{2n}{\Delta+1} \right\rceil\ge 2$. As shown in Theorem \ref{teo-case=2},  $\gamma_{(2,1,0)}(G)=2$  if and only if $\gamma(G)=1$.
 Next we characterize the graph satisfying $\gamma_{(2,1,0)}(G)=3$.

\begin{theorem}\label{teo-case=3-(2,1,0)}
For a graph $G$, $\gamma_{(2,1,0)}(G)=3$  if and only if $\gamma_{\times 2}(G)=\gamma(G)+1=3$. 
\end{theorem}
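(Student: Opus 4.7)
The plan is to prove both implications separately, leaning on Theorem~\ref{teo-case=2} to handle the cases $\gamma_{(2,1,0)}(G)\le 2$. First, note that Theorem~\ref{teo-case=2} gives $\gamma_{(2,1,0)}(G)=2$ precisely when $\gamma(G)=1$ or $\gamma_{\times 2}(G)=2$, while the general observation that $\gamma_{(w_0,\dots,w_l)}(G)\ge 2$ whenever $w_0\ge 2$ rules out the value $1$ here. Consequently, the hypothesis $\gamma_{(2,1,0)}(G)=3$ already forces $\gamma(G)\ge 2$ and $\gamma_{\times 2}(G)\ge 3$; conversely, $\gamma(G)\ge 2$ together with $\gamma_{\times 2}(G)\ge 3$ yields $\gamma_{(2,1,0)}(G)\ge 3$.

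For the forward direction, fix a $\gamma_{(2,1,0)}(G)$-function $f(V_0,V_1,V_2)$. Since $2|V_2|+|V_1|=3$, there are only two possibilities: either $|V_2|=|V_1|=1$, or $V_2=\varnothing$ and $|V_1|=3$. I first rule out the former. If $V_2=\{u\}$ and $V_1=\{v\}$, then the requirement $f(N(v))\ge 1$ forces $u\in N(v)$, and for every $x\in V_0$ the constraint $f(N(x))\ge 2$ can be satisfied only when $u\in N(x)$, since the vertex $v$ alone contributes at most $1$. Hence $u$ is adjacent to every other vertex, so $\gamma(G)=1$, contradicting $\gamma(G)\ge 2$. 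In the remaining case the $(2,1,0)$-dominating condition translates into $|N(x)\cap V_1|\ge 2$ for $x\in V_0$ and $|N(x)\cap V_1|\ge 1$ for $x\in V_1$, so $V_1$ is a double dominating set of size $3$, giving $\gamma_{\times 2}(G)\le 3$ and therefore $\gamma_{\times 2}(G)=3$.

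To establish $\gamma(G)=2$, observe that $G[V_1]$ has no isolated vertex, so $V_1=\{a,b,c\}$ induces either $K_3$ or a path $P_3$. In the $K_3$ case take $S=\{a,b\}$; in the $P_3$ case, say $a-b-c$, take $S=\{a,c\}$. In both scenarios $V_1\subseteq N[S]$, and any $x\in V_0$ cannot be adjacent only to the vertex of $V_1$ omitted from $S$ (namely $c$, or respectively $b$), because such an $x$ would have only one neighbour in $V_1$, contradicting $|N(x)\cap V_1|\ge 2$. Thus $S$ is a dominating set of size $2$, so $\gamma(G)\le 2$, and combined with $\gamma(G)\ge 2$ we conclude $\gamma(G)=2$ and $\gamma_{\times 2}(G)=\gamma(G)+1=3$.

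For the converse, assume $\gamma_{\times 2}(G)=\gamma(G)+1=3$. Let $D$ be a $\gamma_{\times 2}(G)$-set and define $f\colon V(G)\to\{0,1,2\}$ by $f(v)=1$ for $v\in D$ and $f(v)=0$ otherwise. The double domination property $|N[v]\cap D|\ge 2$ for every $v$ gives $f(N(v))=|N(v)\cap D|\ge 2$ when $v\notin D$ and $f(N(v))\ge 1$ when $v\in D$; hence $f$ is a $(2,1,0)$-dominating function of weight $3$, so $\gamma_{(2,1,0)}(G)\le 3$. Combined with the lower bound $\gamma_{(2,1,0)}(G)\ge 3$ derived above from Theorem~\ref{teo-case=2}, the desired equality follows. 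The only subtle step is the $K_3/P_3$ case analysis used to force $\gamma(G)=2$ in the forward direction; everything else is weight bookkeeping and a direct appeal to Theorem~\ref{teo-case=2}.
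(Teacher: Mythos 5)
Your proof is correct, and its skeleton matches the paper's: both directions lean on Theorem~\ref{teo-case=2} to rule out the values $1$ and $2$, and the forward direction runs the same case analysis on $|V_2|$ (the $|V_2|=|V_1|=1$ case forcing a universal vertex, then $V_1$ a double dominating set of size $3$). You deviate in two local steps. First, to get $\gamma(G)=2$ you analyse $G[V_1]$ as $K_3$ or $P_3$ and exhibit an explicit dominating set of size two; the paper instead uses the one-line squeeze $3\le \gamma(G)+1\le \gamma_{\times 2}(G)\le |V_1|=3$, which rests on the general fact that deleting any vertex from a double dominating set leaves a dominating set. Your case analysis is sound (in effect you are reproving that fact for $|D|=3$), but it is heavier than necessary and is really a specialisation of the same principle. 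Second, in the converse you construct the $(2,1,0)$-dominating function directly as the characteristic function of a $\gamma_{\times 2}(G)$-set, whereas the paper cites the upper bound $\gamma_{(2,1,0)}(G)\le \gamma_{\times 2}(G)-|L(G)|+|S(G)|\le\gamma_{\times 2}(G)$ from Theorem~\ref{Bounds(2,1,0)}; your construction is a clean, self-contained proof of exactly the special case needed, avoiding the leaf/support bookkeeping. One shared gloss worth noting: both you and the paper implicitly use that $\gamma_{(2,1,0)}(G)=3$ forces $\delta\ge 1$ (so that $\gamma_{\times 2}(G)$ is defined) and that the order is at least three (the hypothesis of Theorem~\ref{teo-case=2}); both follow quickly, since an isolated vertex would have to receive weight $2$ and the leftover weight $1$ could not satisfy its own constraint, but a sentence acknowledging this would make the appeal to Theorem~\ref{teo-case=2} airtight.
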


\begin{proof}
Assume $\gamma_{(2,1,0)}(G)=3$. By Theorem \ref{teo-case=2} we have that $\gamma(G)\ge 2$. Let $f(V_0,V_1,V_2)$ be a $\gamma_{(2,1,0)}(G)$-function. If $|V_2|=1$ then  $N[V_2]=V(G)$, i.e., $\gamma(G)=1$, which is a contradiction. Thus, $V_2=\varnothing$ and $|V_1|=3$, which implies that  $V_1$ is a double dominating set. Hence, $3\leq\gamma(G)+1\leq \gamma_{\times 2}(G)\le |V_1|=3$. Therefore, $\gamma_{\times 2}(G)=\gamma(G)+1=3$.

Conversely, assume  $\gamma_{\times 2}(G)=\gamma(G)+1=3$. Notice that $G$ has minimum degree $\delta\ge 1$ and so by Theorems \ref{teo-case=2} and \ref{Bounds(2,1,0)} we have that $3\leq \gamma_{(2,1,0)}(G)\leq \gamma_{\times 2}(G)=3$, which implies that $\gamma_{(2,1,0)}(G)=3$.
\end{proof}

Next we consider the case of graphs with $\gamma_{(2,1,0)}(G)=4$. 

\begin{theorem}\label{teo-case=4-(2,1,0)}
For a graph $G$, $\gamma_{(2,1,0)}(G)=4$  if and only if
one of the following conditions is satisfied.
\begin{enumerate}
\item[{\rm (i)}] $G\cong K_1\cup G_1$, where $G_1$ is a graph with $\gamma(G_1)=1$.
\item[{\rm (ii)}] $\gamma_{\times 2}(G)=4$.
\item[{\rm (iii)}] $\gamma(G)=2$ and $\gamma_{\times 2}(G)\geq 4$.
\end{enumerate}
\end{theorem}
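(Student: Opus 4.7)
The plan is to mirror the structure of the proof of Theorem \ref{teo-case=4-(2,2,0)}. The main tool will be the elementary inequality $\gamma_{(2,1,0)}(G) \le \gamma_{\times 2}(G)$: any double dominating set $D$ induces a $(2,1,0)$-dominating function of weight $|D|$ by taking $V_1 := D$, since then $|N(v)\cap V_1|\ge 2$ for every $v\in V_0$ and $|N(v)\cap V_1|\ge 1$ for every $v\in V_1$. This observation will be combined with the upper bound $\gamma_{(2,1,0)}(G)\le 2\gamma(G)$ from Theorem \ref{Bounds(2,1,0)} and the characterizations of small values of $\gamma_{(2,1,0)}$ from Theorems \ref{teo-case=2} and \ref{teo-case=3-(2,1,0)}.

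For the forward direction, I assume $\gamma_{(2,1,0)}(G)=4$. If $G$ has an isolated vertex $v$, then every $(2,1,0)$-dominating function must assign weight $2$ to $v$, so $G$ has at most two isolated vertices; a short analysis shows $G\cong K_1\cup G_1$ with $\gamma(G_1)=1$, by applying Theorem \ref{teo-case=2} to the restriction of an optimal $f$ to $G_1$, yielding (i). Otherwise, fix a $\gamma_{(2,1,0)}(G)$-function $f(V_0,V_1,V_2)$ and split into subcases according to $|V_2|\in\{0,1,2\}$. When $V_2=\varnothing$, the set $V_1$ of cardinality $4$ is a double dominating set by the $(2,1,0)$-constraints, hence $4=\gamma_{(2,1,0)}(G)\le \gamma_{\times 2}(G)\le |V_1|=4$, yielding (ii). When $|V_2|\ge 1$, I will exhibit a dominating set of size at most $2$: take $V_2$ itself if $|V_2|=2$; otherwise, writing $V_2=\{u\}$ and $V_1=\{v_1,v_2\}$, the set $\{u,v_1\}$ dominates $G$ because the constraint $f(N(w))\ge 2$ for $w\in V_0$ forces $w$ to be adjacent to $u$ or to both of $v_1,v_2$, and $f(N(v_2))\ge 1$ forces $v_2$ to be adjacent to $u$ or $v_1$. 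So $\gamma(G)\le 2$, and Theorem \ref{teo-case=2} rules out $\gamma(G)=1$, so $\gamma(G)=2$; moreover $\gamma_{\times 2}(G)\le 3$ would contradict $\gamma_{(2,1,0)}(G)\le \gamma_{\times 2}(G)=4$, so $\gamma_{\times 2}(G)\ge 4$, yielding (iii).

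For the converse, each condition implies $\gamma_{(2,1,0)}(G)=4$ by bound sandwiching. Under (i), the isolated vertex contributes weight exactly $2$ while the restriction of an optimal function to $G_1$ is $(2,1,0)$-dominating, so $\gamma_{(2,1,0)}(G)=2+\gamma_{(2,1,0)}(G_1)=4$ via Theorem \ref{teo-case=2}. Under (ii), the upper bound $\gamma_{(2,1,0)}(G)\le \gamma_{\times 2}(G)=4$ combines with the observation that $\gamma_{\times 2}(G)=4$ forces $\gamma(G)\ge 2$ (otherwise $\gamma_{\times 2}(G)\le 2$), so Theorems \ref{teo-case=2} and \ref{teo-case=3-(2,1,0)} exclude the values $2$ and $3$. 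Under (iii), Theorem \ref{Bounds(2,1,0)} gives $\gamma_{(2,1,0)}(G)\le 2\gamma(G)=4$, and the same two theorems again exclude $\gamma_{(2,1,0)}(G)\le 3$ in view of $\gamma_{\times 2}(G)\ge 4$. The main obstacle will be the subcase $|V_2|=1$ of the forward direction, where the construction of a size-$2$ dominating set requires a careful unpacking of the $(2,1,0)$-constraints at both the $V_0$ and $V_1$ vertices; once this is done, the rest is routine.
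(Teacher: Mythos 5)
Your argument follows the paper's proof of this theorem almost line by line: the same reduction of the isolated-vertex case via Theorem \ref{teo-case=2}, the same case analysis on $|V_2|$ in the forward direction (including the observation that when $|V_2|=1$ the set $V_2\cup\{v\}$ with $v\in V_1$ is a dominating set of size two, which is exactly the paper's argument), and the same sandwiching between the bounds of Theorem \ref{Bounds(2,1,0)} and the small-value characterizations of Theorems \ref{teo-case=2} and \ref{teo-case=3-(2,1,0)} in the converse. The forward direction and the converses of (i) and (iii) are sound; your auxiliary inequality $\gamma_{(2,1,0)}(G)\le \gamma_{\times 2}(G)$ is correct and is a clean substitute for the paper's bound $\gamma_{(2,1,0)}(G)\le \gamma_{\times 2}(G)-|L(G)|+|S(G)|$.

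The genuine gap is in your converse of (ii). The parenthetical claim that $\gamma(G)=1$ would force $\gamma_{\times 2}(G)\le 2$ is false: a universal vertex does not yield a small double dominating set, since double domination requires $|N[v]\cap D|\ge 2$ for \emph{every} vertex $v$. In the star $K_{1,r}$ with $r\ge 2$, every leaf and the centre must belong to any double dominating set, so $\gamma_{\times 2}(K_{1,r})=r+1$ while $\gamma(K_{1,r})=1$. In particular $K_{1,3}$ satisfies condition (ii) with $\gamma_{\times 2}(K_{1,3})=4$, and yet $\gamma_{(2,1,0)}(K_{1,3})=2$ (assign weight $2$ to the centre), exactly as Theorem \ref{teo-case=2} predicts. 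So your step cannot be repaired by a different justification: under (ii) alone the value $2$ cannot be excluded, and the implication ``(ii) $\Rightarrow\gamma_{(2,1,0)}(G)=4$'' fails unless one additionally assumes $\gamma(G)\ge 2$ --- which is precisely what the forward direction delivers alongside (ii). It is worth noting that the paper's own converse is silent at this exact point: it cites Theorems \ref{teo-case=2} and \ref{teo-case=3-(2,1,0)} to exclude the values $2$ and $3$ without verifying $\gamma(G)\ne 1$, whereas the analogous Theorem \ref{teo-case=4-(2,2,0)} is safe because $\gamma_{\times 2,t}$ forces minimum degree $\delta\ge 2$. Your attempt at least makes the needed claim explicit, but the justification supplied is wrong rather than merely incomplete, and the star shows the claim itself is false.
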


\begin{proof}
If $K_1$ is a component of $G$, then by  Theorem \ref{teo-case=2} we conclude that 
$\gamma_{(2,1,0)}(G)=4$  if and only if
 $G\cong K_1\cup G_1$, where $G_1$ is a graph with $\gamma(G_1)=1$.

 From now on, we consider the case where $G$ is a graph  with no isolated vertex.
Assume $\gamma_{(2,1,0)}(G)=4$. By Theorem \ref{Bounds(2,1,0)} we deduce that $\gamma_{\times 2}(G)\geq 4$ and $\gamma(G)\ge 2$. Let $f(V_0,V_1,V_2)$ be a $\gamma_{(2,1,0)}(G)$-function. If $V_2=\varnothing$, then $V_1$  is a double dominating set of $G$, which implies that $\gamma_{\times 2}(G)\leq |V_1|=4$. Hence, (ii) follows. From now on, assume $|V_2|\in \{1,2\}$. If $|V_2|=2$, then $V_1=\varnothing$ and so, $V_2$ is a dominating set of $G$, which implies that $\gamma(G)=2$.
If  $|V_2|=1$, then for every $v\in V_1$ we have that $V_2\cup\{v\}$ is a dominating set of $G$. Hence, $\gamma(G)=2$.  Therefore, (iii) follows.

Conversely, if (ii) or (iii)  holds, then by Theorems \ref{Bounds(2,1,0)}  we have that 
$2\le \gamma_{(2,1,0)}(G)\le 4$. Therefore, by Theorems 
\ref{teo-case=2} and \ref{teo-case=3-(2,1,0)} we deduce that  $\gamma_{(2,1,0)}(G)=4$,  which completes the proof.
\end{proof}

The  formulas on the $\{k\}$-dominating number of cycles and paths were obtained in  \cite{k-domination-2008}. We present here the particular case of $k=2$, as $  \gamma_{\{2\}}(G)= \gamma_{(2,1,0)}(G)$.

\begin{proposition}{\rm \cite{k-domination-2008}}\label{teo-Cn-(2,1,0)}
For any integer $n\ge 3$,  $$ \gamma_{\{2\}}(C_n) =\left\lceil  \frac{2n}{3} \right\rceil \quad \text{ and } \quad \gamma_{\{2\}}(P_n)=
2\left\lceil  \frac{n}{3} \right\rceil. $$
\end{proposition}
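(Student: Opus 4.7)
The plan is to establish matching upper and lower bounds for each equality. For $C_n$, the lower bound $\gamma_{\{2\}}(C_n) \ge \lceil 2n/3\rceil$ is immediate from the inequality in Theorem \ref{Bounds(2,1,0)} since $\Delta(C_n)=2$. For the upper bound I would tile $C_n$ with blocks of the pattern $(1,1,0)$ (weight $2$ on $3$ consecutive vertices); when $n \not\equiv 0 \pmod 3$ I would append a short corrective tail such as $(1,1,0,1)$ for $n=3k+1$ or $(1,1)$ for $n=3k+2$, chosen so that every $0$-vertex still has its two cyclic neighbours summing to at least $2$ and every $1$-vertex has a neighbour contributing at least $1$, while the total weight matches $\lceil 2n/3\rceil$ exactly.

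For $P_n$ the upper bound $\gamma_{\{2\}}(P_n) \le 2\gamma(P_n) = 2\lceil n/3\rceil$ is a direct application of Theorem \ref{Bounds(2,1,0)}, realised by placing weight $2$ on every vertex of a $\gamma(P_n)$-set. For the matching lower bound I would proceed by induction on $n$ with step $3$, verifying the small cases $n \in \{3,4,5\}$ by inspection. For the inductive step, let $f$ be a $\gamma_{\{2\}}(P_n)$-function on $P_n = v_1\dots v_n$ with $n\ge 6$. The leaf constraint at $v_1$ gives $f(v_1)+f(v_2)\ge 2$ irrespective of which of $V_0,V_1,V_2$ contains $v_1$. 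I would then define a function $f'$ on the subpath $P_{n-3}=v_4\dots v_n$ by $f'(v_4) = \min\{2,f(v_4)+f(v_3)\}$ and $f'(v_i)=f(v_i)$ for $i\ge 5$. A short case analysis on $f(v_4)\in\{0,1,2\}$ shows that $f'$ is a $(2,1,0)$-dominating function on $P_{n-3}$, since the transfer $f(v_3)\mapsto v_4$ is exactly what the leaf $v_4$ in the shortened path may need, while the constraint at $v_5$ can only improve. Since $f'(v_4)-f(v_4)\le f(v_3)$, one has $\omega(f')\le \omega(f)-f(v_1)-f(v_2)\le \omega(f)-2$, and combining with the induction hypothesis yields $\gamma_{\{2\}}(P_n)\ge \omega(f')+2 \ge \gamma_{\{2\}}(P_{n-3})+2 = 2\lceil n/3\rceil$.

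The main obstacle is the lower bound for $P_n$ when $n\equiv 1\pmod 3$: in that residue, both the generic inequality in Theorem \ref{Bounds(2,1,0)} and the subgraph comparison $\gamma_{\{2\}}(P_n)\ge \gamma_{\{2\}}(C_n)$ (via Proposition \ref{obs-subgraph-general}) give only $\lceil 2n/3\rceil = 2\lceil n/3\rceil-1$, one unit short of the target. The endpoint-sensitive induction sketched above is what recovers the missing unit, and the delicate point is verifying that the weight transferred from $v_3$ to $v_4$ never violates the new leaf constraint of $v_4$ in $P_{n-3}$; this is exactly what the minimum with $2$ and the original inequality at $v_4$ in $P_n$ guarantee.
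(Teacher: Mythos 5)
Your proof is correct, but note that the paper does not prove this proposition at all: it simply imports the formulas from Lee and Chang \cite{k-domination-2008}, observing that $\gamma_{\{2\}}(G)=\gamma_{(2,1,0)}(G)$, so any self-contained argument is necessarily a different route. Everything in your sketch checks out: the lower bound for $C_n$ from Theorem \ref{Bounds(2,1,0)} with $\Delta=2$; the cyclic tiling by $(1,1,0)$-blocks with tails $(1,1,0,1)$ and $(1,1)$ (both patterns verify the $0$- and $1$-vertex constraints and hit $\lceil 2n/3\rceil$ exactly); and the step-$3$ induction for $P_n$, where the transfer $f'(v_4)=\min\{2,f(v_4)+f(v_3)\}$ works precisely because the original constraint at $v_4$ in $P_n$ supplies the needed bound on $f(v_5)$ in every subcase, while $f'(v_4)\ge f(v_4)$ keeps the constraint at $v_5$ intact, and the endpoint inequality $f(v_1)+f(v_2)\ge 2$ (valid whichever of $V_0,V_1,V_2$ contains $v_1$) yields $\omega(f')\le\omega(f)-2$. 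Your diagnosis of the delicate residue is also accurate: only for $n\equiv 1\pmod 3$ does $\lceil 2n/3\rceil=2\lceil n/3\rceil-1$ fall one short, so neither Theorem \ref{Bounds(2,1,0)} nor the comparison $\gamma_{\{2\}}(P_n)\ge\gamma_{\{2\}}(C_n)$ via Proposition \ref{obs-subgraph-general} suffices there, and the endpoint-sensitive induction is what recovers the missing unit. Methodologically, your argument mirrors what the paper itself does for the neighbouring parameters --- the window-counting of Lemmas \ref{lema-1} and \ref{Lemma-Cn-(2,2,1)} for $(2,2,1)$ on cycles, and the truncation induction of Lemma \ref{lem-new-new} and Proposition \ref{teo-Pn} for $(2,2,2)$ on paths (there with step $4$, here with step $3$) --- so what the citation buys the paper in brevity, your proof buys in self-containedness, at the modest cost of the base cases $n\in\{3,4,5\}$, which do verify by inspection.
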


\subsection{Preliminary results on $(2,2,2,0)$-domination}
\label{SubSection(2,2,2,0)}

The following result is a direct consequence of Theorem \ref{GeneralUpperBounds(2,2,2)} (i), (ii) and (vi).

\begin{corollary}\label{Bounds(2,2,2,0)}
For any graph $G$ with no isolated vertex, 
  $$\gamma_{(2,2,1)}(G)\le \gamma_{(2,2,2,0)}(G) \le  \min\{3\gamma(G), \gamma_{(2,2,2)}(G)\}.$$
\end{corollary}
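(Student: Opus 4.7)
The plan is to obtain each of the three inequalities as an immediate specialization of the corresponding part of Theorem \ref{GeneralUpperBounds(2,2,2)}, using only that $G$ has no isolated vertex (so $\delta\ge 1$). No new construction of dominating functions is needed; the whole argument consists in choosing the right vectors and indices.

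First I would establish the lower bound $\gamma_{(2,2,1)}(G)\le \gamma_{(2,2,2,0)}(G)$ by invoking Theorem \ref{GeneralUpperBounds(2,2,2)} (vi) with $l=3$ and $w=(w_0,w_1,w_2,w_3)=(2,2,2,0)$. The three hypotheses are $\delta\ge 1$ (given), $w_0=2\le l-1=2$, and $w_{l-1}=w_2=2\ge 1$; the conclusion $\gamma_{(w_0,w_1,1)}(G)\le \gamma_{(w_0,w_1,w_2,0)}(G)$ is exactly $\gamma_{(2,2,1)}(G)\le \gamma_{(2,2,2,0)}(G)$.

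Next, for $\gamma_{(2,2,2,0)}(G)\le \gamma_{(2,2,2)}(G)$ I would apply Theorem \ref{GeneralUpperBounds(2,2,2)} (i) to the vector $(w_0,w_1,w_2,w_3)=(2,2,2,0)$ (so $l=3$) with the choice $i=2$. The required condition $i\delta\ge w_i$ becomes $2\delta\ge 2$, which holds because $\delta\ge 1$, and the theorem yields $\gamma_{(2,2,2,0)}(G)\le \gamma_{(2,2,2)}(G)$.

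Finally, for $\gamma_{(2,2,2,0)}(G)\le 3\gamma(G)$ I would apply Theorem \ref{GeneralUpperBounds(2,2,2)} (ii) with $l=3$, $i=2$, and $(w_0,w_1,w_2)=(2,2,2)$. The hypothesis $l\ge i+1\ge w_0$ becomes $3\ge 3\ge 2$, so the conclusion $\gamma_{(w_0,w_1,w_2,0)}(G)\le (i+1)\gamma(G)$ reads $\gamma_{(2,2,2,0)}(G)\le 3\gamma(G)$. Combining the two upper bounds gives $\gamma_{(2,2,2,0)}(G)\le \min\{3\gamma(G),\gamma_{(2,2,2)}(G)\}$, which completes the proof. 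There is no real obstacle; the only point requiring a little care is keeping the indexing straight when matching the current vector $(2,2,2,0)$ to the generic template $(w_0,\dots,w_l)$ used in each of (i), (ii) and (vi).
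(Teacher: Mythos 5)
Your proposal is correct and is exactly the paper's own argument: the paper states the corollary as ``a direct consequence of Theorem \ref{GeneralUpperBounds(2,2,2)} (i), (ii) and (vi)'', and your instantiations (part (vi) with $l=3$ and $w=(2,2,2,0)$, part (i) with $i=2$, and part (ii) with $i=2$, $l=3$) are precisely the intended specializations, with all hypotheses correctly verified from $\delta\ge 1$.
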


The bounds above are tight. For instance, every graph $G_{k,r}$ belonging to the infinite family $\mathcal{H}_k$ constructed after Remark \ref{REmarkMonotonicityPart} satisfies the equalities  $\gamma_{(2,2,1)}(G_{k,r})=\gamma_{(2,2,2)}(G_{k,r})=\gamma_{(2,2,2,0)}(G_{k,r})=k$. In contrast, 
the graph  shown in Figure \ref{FigRaro} satisfies  $\gamma_{(2,2,1)}(G)=6<7=\gamma_{(2,2,2,0)}(G)<8=\gamma_{(2,2,2)}(G)$. Moreover, Figure \ref{figDominat=2packing} illustrates a graph $G$ with $\gamma_{(2,2,1)}(G)=\gamma_{(2,2,2,0)}(G)=3\gamma(G)=9.$

In order to characterize the graphs with 
$\gamma_{(2,2,2,0)}(G)\in \{3,4\}$, we need to establish the following lemma. 

\begin{lemma}\label{lem-small-values-(2,2,2,0)}
For a graph $G$, the following statements are equivalent.

\begin{enumerate}
\item[{\rm (i)}] $\gamma_{(2,2,2,0)}(G)=\gamma_{(2,2,2)}(G)$.
\item[{\rm (ii)}] There exists a $\gamma_{(2,2,2,0)}(G)$-function $f(V_0,V_1,V_2,V_3)$ such that $V_3=\varnothing  $.
\end{enumerate}
\end{lemma}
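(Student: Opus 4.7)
The plan is to prove both implications directly by observing that a function avoiding value $3$ is essentially the same object viewed as either a $(2,2,2)$-dominating function or a $(2,2,2,0)$-dominating function. Corollary \ref{Bounds(2,2,2,0)} already gives the inequality $\gamma_{(2,2,2,0)}(G)\le \gamma_{(2,2,2)}(G)$, and the proof will use this bound to close the loop in the implication (ii) $\Rightarrow$ (i).

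For (ii) $\Rightarrow$ (i), I would take a $\gamma_{(2,2,2,0)}(G)$-function $f(V_0,V_1,V_2,V_3)$ with $V_3=\varnothing$. Since $f$ takes values only in $\{0,1,2\}$, it may be viewed as a function $g(V_0,V_1,V_2)$ whose $(2,2,2)$-domination constraints coincide with the $(2,2,2,0)$-domination constraints imposed on $f$ for indices $0,1,2$ (the constraint associated with $V_3$ is vacuous because $V_3=\varnothing$). Hence $g$ is a $(2,2,2)$-dominating function, giving $\gamma_{(2,2,2)}(G)\le \omega(g)=\omega(f)=\gamma_{(2,2,2,0)}(G)$. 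Combined with $\gamma_{(2,2,2,0)}(G)\le \gamma_{(2,2,2)}(G)$ from Corollary \ref{Bounds(2,2,2,0)}, equality follows.

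For (i) $\Rightarrow$ (ii), I would start with a $\gamma_{(2,2,2)}(G)$-function $h(V_0,V_1,V_2)$ and extend it to $h'(V_0,V_1,V_2,V_3)$ by setting $V_3=\varnothing$. The three active domination constraints for $h'$ are identical to those of $h$, and the fourth (associated with $V_3$) is vacuous, so $h'$ is a $(2,2,2,0)$-dominating function. Then $\gamma_{(2,2,2,0)}(G)\le \omega(h')=\omega(h)=\gamma_{(2,2,2)}(G)=\gamma_{(2,2,2,0)}(G)$ by the hypothesis, so $h'$ is a $\gamma_{(2,2,2,0)}(G)$-function with $V_3=\varnothing$, establishing (ii).

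There is no real obstacle here; the lemma is essentially a bookkeeping statement about the relationship between functions with codomain $\{0,1,2\}$ and those with codomain $\{0,1,2,3\}$ whose level-$3$ preimage is empty. The only care needed is in checking that the $w$-domination conditions indexed by the slot $w_3=0$ are vacuous whenever $V_3=\varnothing$ (no vertex has to satisfy any constraint there), so a function of one type automatically transfers to the other, and then Corollary \ref{Bounds(2,2,2,0)} provides the inequality needed to certify minimum weight.
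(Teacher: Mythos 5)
Your proposal is correct and follows essentially the same route as the paper: both directions transfer a function between the codomains $\{0,1,2\}$ and $\{0,1,2,3\}$ using that the constraint attached to $V_3$ is vacuous when $V_3=\varnothing$, and both invoke the inequality $\gamma_{(2,2,2,0)}(G)\le \gamma_{(2,2,2)}(G)$ from Corollary \ref{Bounds(2,2,2,0)} to certify minimality. The only difference is cosmetic: you spell out the weight computation $\omega(h')=\omega(h)=\gamma_{(2,2,2)}(G)=\gamma_{(2,2,2,0)}(G)$ in the direction (i) $\Rightarrow$ (ii), which the paper leaves implicit.
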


\begin{proof}  
If $\gamma_{(2,2,2,0)}(G)=\gamma_{(2,2,2)}(G)$, then for any $\gamma_{(2,2,2)}(G)$-function  $f(V_0,V_1,V_2)$, there exists a $\gamma_{(2,2,2,0)}(G)$-function $g(W_0,W_1,W_2,W_3)$ defined by  $W_0=V_0$, $W_1=V_1$, $W_2=V_2$ and $W_3=\varnothing  $.  Therefore, (i) implies (ii).

Conversely, if there exists a $\gamma_{(2,2,2,0)}(G)$-function $f(V_0,V_1,V_2,V_3)$ such that $V_3=\varnothing  $, then the function $g(W_0,W_1,W_2)$, defined by $W_0=V_0$, $W_1=V_1$ and $W_2=V_2$, is a $(2,2,2)$-dominating function on $G$, and so $\gamma_{(2,2,2)}(G)\leq \omega(g)=\omega(f)=\gamma_{(2,2,2,0)}(G)$. Therefore, Corollary \ref{Bounds(2,2,2,0)}   leads to  $\gamma_{(2,2,2,0)}(G)=\gamma_{(2,2,2)}(G)$, which completes the proof.
\end{proof}

\begin{theorem}\label{teo-case=3-(2,2,2,0)}
For a graph $G$,  the following statements are equivalent.
\begin{enumerate}
\item[{\rm (i)}] $\gamma_{(2,2,2,0)}(G)=3$.
\item[{\rm (ii)}] $\gamma(G)=1$ or $\gamma_{\times 2,t}(G)=3$.
\end{enumerate}
\end{theorem}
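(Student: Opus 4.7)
The plan is to mirror the proofs of Theorems~\ref{teo-case=3-(2,2,2)} and \ref{teo-case=3-(2,2,1)}: a direct case analysis of a weight-$3$ $\gamma_{(2,2,2,0)}(G)$-function for the forward direction, together with explicit constructions combined with Corollary~\ref{Bounds(2,2,2,0)} for the converse.

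For $(i)\Rightarrow(ii)$, I would fix a $\gamma_{(2,2,2,0)}(G)$-function $f(V_0,V_1,V_2,V_3)$ and split according to the multiset of non-zero values, which, since $\omega(f)=3$ with values in $\{0,1,2,3\}$, must be $\{3\}$, $\{2,1\}$ or $\{1,1,1\}$. In the case $|V_3|=1$, the only vertex of positive weight must lie in the open neighbourhood of every vertex in $V_0$ in order to satisfy $f(N(u))\ge 2$, forcing it to be universal and yielding $\gamma(G)=1$. The case $|V_2|=|V_1|=1$ is vacuous: the vertex of weight $2$ would require $f(N(\cdot))\ge 2$, whereas the only other positive weight in $G$ equals $1$. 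In the case $|V_1|=3$, checking $f(N(x))\ge 2$ at each $x\in V_1$ forces the three vertices of $V_1$ to form a triangle, and $f(N(u))\ge 2$ for every $u\in V_0$ forces at least two of them to lie in $N(u)$; hence $V_1$ is a double total dominating set, and since no such set has cardinality less than $3$ (in a set of size $2$, each vertex has at most one partner available in the set), one obtains $\gamma_{\times 2,t}(G)=3$.

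For the converse direction, Theorem~\ref{teo-case=3-(2,2,1)} gives $\gamma_{(2,2,1)}(G)=3$ under either hypothesis, and Corollary~\ref{Bounds(2,2,2,0)} then yields the lower bound $\gamma_{(2,2,2,0)}(G)\ge\gamma_{(2,2,1)}(G)=3$. For the matching upper bound I would exhibit explicit weight-$3$ functions: if $v$ is a universal vertex, assign $f(v)=3$ and $f(u)=0$ for $u\ne v$, so that $v\in V_3$ carries no constraint while every $u\in V_0$ has $f(N(u))\ge f(v)=3$; if $D=\{a,b,c\}$ is a double total dominating set, set $f\equiv 1$ on $D$ and $f\equiv 0$ elsewhere, so that the defining inequality $|N(v)\cap D|\ge 2$ guarantees $f(N(v))\ge 2$ for every vertex of $G$. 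I do not anticipate a serious obstacle: the only subtle points are ruling out the $\{2,1\}$ configuration and the lower bound $\gamma_{\times 2,t}(G)\ge 3$, both of which are immediate counting checks from the definitions.
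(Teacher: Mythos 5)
Your proposal is correct, and its forward direction takes a genuinely more self-contained route than the paper's. The paper splits only on $|V_3|\in\{0,1\}$: the case $|V_3|=1$ yields $\gamma(G)=1$ exactly as you argue, but the case $V_3=\varnothing$ is dispatched by citing Lemma~\ref{lem-small-values-(2,2,2,0)} (to obtain $\gamma_{(2,2,2,0)}(G)=\gamma_{(2,2,2)}(G)=3$) and then Theorem~\ref{teo-case=3-(2,2,2)} (to obtain $\gamma_{\times 2,t}(G)=3$). You instead enumerate the weight-$3$ patterns $\{3\}$, $\{2,1\}$, $\{1,1,1\}$ and re-derive the double total domination conclusion from scratch, including the easy-but-necessary elimination of $\{2,1\}$, the triangle structure of $V_1$, and the observation $\gamma_{\times 2,t}(G)\ge 3$; this is precisely the counting the paper has already distributed across Lemma~\ref{lem-small-values-(2,2,2)} and Theorem~\ref{teo-case=3-(2,2,2)}, so you have collapsed its two-step reduction into one direct argument. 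Your converse is essentially the paper's argument unpacked: the explicit functions you exhibit ($f\equiv 3$ on a universal vertex, $f\equiv 1$ on a $\gamma_{\times 2,t}(G)$-set) are exactly the constructions underlying the paper's upper bounds $\gamma_{(2,2,2,0)}(G)\le 3\gamma(G)$ and $\gamma_{(2,2,2,0)}(G)\le\gamma_{(2,2,2)}(G)\le\gamma_{\times 2,t}(G)$, and your lower bound via $\gamma_{(2,2,1)}(G)=3$ is the left inequality of Corollary~\ref{Bounds(2,2,2,0)}. The trade-off is clear: the paper's proof is shorter and reuses Lemma~\ref{lem-small-values-(2,2,2,0)}, which it needs again in the characterization of $\gamma_{(2,2,2,0)}(G)=4$, whereas yours is independent of the $(2,2,2)$ machinery and exposes the structural reason (a triangle doubly totally dominating $G$) for the equivalence. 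One shared pedantic caveat: for $G\cong K_1$ one has $\gamma_{(2,2,2,0)}(G)=3$ and $\gamma(G)=1$, yet $\gamma_{(2,2,1)}(K_1)$ is undefined, so your lower bound there needs the one-line direct check that weight at most $2$ is impossible; the paper's appeal to Corollary~\ref{Bounds(2,2,2,0)}, stated for graphs with no isolated vertex, quietly has the same gap.
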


\begin{proof}
 Assume first that $\gamma_{(2,2,2,0)}(G)=3$, and let $f(V_0,V_1,V_2,V_3)$ be a $\gamma_{(2,2,2,0)}(G)$-function. Notice that $|V_3|\in \{0,1\}$. If $|V_3|=1$, then $V_1\cup V_2=\varnothing  $, which implies that $V_3$ is a dominating set of cardinality one. Hence, $\gamma(G)=1$.
 
If $V_3=\varnothing  $, then by Lemma \ref{lem-small-values-(2,2,2,0)} we have that $\gamma_{(2,2,2)}(G)=\gamma_{(2,2,2,0)}(G)=3$, and by Theorem \ref{teo-case=3-(2,2,2)} we deduce that $\gamma_{\times 2,t}(G)=3$. 

Conversely, if $\gamma(G)=1$, then Corollary \ref{Bounds(2,2,2,0)} leads to $3\leq \gamma_{(2,2,2,0)}(G)\leq 3\gamma(G)=3$. Moreover, if $\gamma_{\times 2,t}(G)=3$, then $G$ has minimum degree $\delta\ge 2$ and so Theorem  \ref{GeneralUpperBounds(2,2,2)} (i)  leads to $3\leq \gamma_{(2,2,2,0)}(G)\leq \gamma_{(2,2,2)}(G)\leq \gamma_{\times 2,t}(G)=3$. Therefore, $\gamma_{(2,2,2,0)}(G)=3$. 
\end{proof}

\begin{theorem}\label{teo-case=4-(2,2,2,0)}
For a graph $G$,  $\gamma_{(2,2,2,0)}(G)=4$  if and only if at least 
one of the following conditions holds.
\begin{enumerate}
\item[{\rm (i)}] $\gamma_{\times 2,t}(G)=4$.
\item[{\rm (ii)}] $\gamma(G)=\gamma_t(G)=2$ and $G$ has minimum degree $\delta=1$. 
\item[{\rm (iii)}] $\gamma(G)=\gamma_t(G)=2$ and  $\gamma_{\times 2,t}(G)\geq 4$.
\end{enumerate}
\end{theorem}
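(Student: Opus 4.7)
The plan is to mirror the proof of Theorem \ref{teo-case=4-(2,2,2)}, reducing most of the argument to that result via Lemma \ref{lem-small-values-(2,2,2,0)} and handling separately the only new configuration: a $\gamma_{(2,2,2,0)}(G)$-function that assigns weight $3$ to some vertex.

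For the forward implication, I would let $f(V_0,V_1,V_2,V_3)$ be a $\gamma_{(2,2,2,0)}(G)$-function. Since $\omega(f)=4$, one has $|V_3|\le 1$. If $|V_3|=1$, say $V_3=\{u\}$, the remaining weight equals $1$ and forces $V_2=\varnothing$ and $V_1=\{v\}$. The constraint $f(N(v))\ge 2$ and the constraints $f(N(w))\ge 2$ for $w\in V_0$ then force $u$ to be a neighbour of every other vertex (since $v$ alone contributes only $1$), so $\gamma(G)=1$, contradicting Theorem \ref{teo-case=3-(2,2,2,0)}. Hence $V_3=\varnothing$ and Lemma \ref{lem-small-values-(2,2,2,0)} yields $\gamma_{(2,2,2)}(G)=4$. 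Applying Theorem \ref{teo-case=4-(2,2,2)} I obtain one of its three alternatives: either $\gamma_{\times 2,t}(G)=4$ (giving (i) directly), or $\gamma_t(G)=2$ together with $\delta=1$ or $\gamma_{\times 2,t}(G)\ge 4$. In the latter two cases $\gamma(G)\le\gamma_t(G)=2$, and Theorem \ref{teo-case=3-(2,2,2,0)} forbids $\gamma(G)=1$ (as that would drop the value to $3$), so $\gamma(G)=2$, matching (ii) or (iii).

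For the converse I must derive both bounds $\gamma_{(2,2,2,0)}(G)\le 4$ and $\gamma_{(2,2,2,0)}(G)\ge 4$. Under (i), Corollary \ref{Bounds(2,2,2,0)} combined with Theorem \ref{general_bound(2,2,2)} gives $\gamma_{(2,2,2,0)}(G)\le\gamma_{(2,2,2)}(G)\le\gamma_{\times 2,t}(G)=4$. Under (ii) or (iii), taking a $\gamma_t(G)$-set $\{u,v\}$ and setting $f(u)=f(v)=2$ with $f\equiv 0$ elsewhere yields a $(2,2,2,0)$-dominating function of weight $4$: since $u\sim v$ the constraints at $u$ and $v$ are met, and since $\{u,v\}$ dominates $V(G)$ every $w\in V_0$ satisfies $f(N(w))\ge 2$. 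The matching lower bound $\gamma_{(2,2,2,0)}(G)\ge 4$ then follows in each case from Theorem \ref{teo-case=3-(2,2,2,0)} by excluding both $\gamma(G)=1$ and $\gamma_{\times 2,t}(G)=3$: $\gamma(G)=2$ holds in (ii) and (iii); and $\gamma_{\times 2,t}(G)\ne 3$ because it equals $4$ in (i), is at least $4$ in (iii), and does not exist when $\delta=1$ in (ii).

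I expect the $|V_3|=1$ analysis to be the main technical obstacle in the forward direction, since it has no counterpart in the proof of Theorem \ref{teo-case=4-(2,2,2)}. A subtler point in the converse under (i) is verifying $\gamma(G)\ne 1$, which is what allows Theorem \ref{teo-case=3-(2,2,2,0)} to be invoked to rule out the value $3$; the cleanest way is probably to argue directly that a graph admitting a double total dominating set of size exactly $4$ cannot have a universal vertex.
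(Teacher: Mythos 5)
Your proposal follows the paper's route almost step for step: the forward direction (bounding $|V_3|\le 1$, showing that a vertex of weight $3$ together with the single remaining weight-$1$ vertex forces a universal vertex and hence $\gamma(G)=1$, contradicting Theorem \ref{teo-case=3-(2,2,2,0)}; then reducing via Lemma \ref{lem-small-values-(2,2,2,0)} to Theorem \ref{teo-case=4-(2,2,2)} and upgrading $\gamma_t(G)=2$ to $\gamma(G)=2$ by again excluding $\gamma(G)=1$) is exactly the paper's argument, with more detail than the paper's terse claim that ``$V_3$ is a dominating set of cardinality one''. The converse under (ii) and (iii) is likewise sound. The genuine problem is precisely the point you flagged as subtle: the converse under (i). Your proposed repair --- that a graph with $\gamma_{\times 2,t}(G)=4$ cannot have a universal vertex --- is false. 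Take the wheel $W_5$, a $5$-cycle $v_1\dots v_5$ plus a hub $h$ adjacent to all rim vertices: $h$ is universal, yet $\gamma_{\times 2,t}(W_5)=4$, since any double total dominating set of size $3$ must induce a subgraph of minimum degree $2$, hence a triangle, and every triangle of $W_5$ has the form $\{h,v_i,v_{i+1}\}$, which leaves the rim vertex $v_{i+3}$ with only $h$ as a neighbour in the set.

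In fact no repair of that step is possible, because condition (i) alone does not imply $\gamma_{(2,2,2,0)}(G)=4$: on $W_5$ assign $3$ to the hub and $0$ elsewhere; the weight-$3$ vertex carries no requirement (as $w_3=0$) and every rim vertex sees weight $3\ge 2$ in its open neighbourhood, so $\gamma_{(2,2,2,0)}(W_5)=3$ even though (i) holds. You have thus located a defect in the theorem itself: the paper's own converse conceals the same gap behind the incorrect assertion that $\delta\ge 2$ implies $\gamma(G)\ge 2$, which $W_5$ refutes. The statement and both proofs are salvaged by strengthening (i) to ``$\gamma_{\times 2,t}(G)=4$ and $\gamma(G)\ge 2$'': this extra hypothesis is exactly what the forward direction delivers (since $\gamma(G)=1$ forces $\gamma_{(2,2,2,0)}(G)=3$ by Theorem \ref{teo-case=3-(2,2,2,0)}), and with it your converse argument under (i) goes through verbatim.
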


\begin{proof}
Assume $\gamma_{(2,2,2,0)}(G)=4$. Let $f(V_0,V_1,V_2,V_3)$ be a $\gamma_{(2,2,2,0)}(G)$-function. Hence, $|V_3|\in \{0,1\}$. If $ |V_3|=1$, then $V_3$ is a dominating set of cardinality one. Hence, $\gamma(G)=1$, which is a contradiction with Theorem \ref{teo-case=3-(2,2,2,0)}. Hence, $V_3=\varnothing  $, and so, Lemma \ref{lem-small-values-(2,2,2,0)} leads to $\gamma_{(2,2,2)}(G)=\gamma_{(2,2,2,0)}(G)=4$. Thus, by Theorems \ref{teo-case=4-(2,2,2)} and \ref{teo-case=3-(2,2,2,0)} we deduce (i)-(iii). 

Conversely, if conditions (i)-(iii) hold, then by Theorem \ref{teo-case=3-(2,2,2)} we have that $\gamma_{(2,2,2)}(G)=4$. Corollary~\ref{Bounds(2,2,2,0)} leads to $3\leq \gamma_{(2,2,2,0)}(G)\leq \gamma_{(2,2,2)}(G)=4$. Notice that if  $\delta\geq 2$, then $\gamma(G)\geq 2$ and $\gamma_{\times 2,t}(G)\geq 4$. Hence, Theorem \ref{teo-case=3-(2,2,2,0)} leads to  $\gamma_{(2,2,2,0)}(G)=4$.
\end{proof}

\begin{proposition}\label{prop-Cn-2220}
For any integer $n\geq 3$,
$$\gamma_{(2,2,2,0)}(C_n)=n.$$
\end{proposition}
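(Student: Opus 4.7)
The upper bound $\gamma_{(2,2,2,0)}(C_n) \leq n$ is immediate from Corollary~\ref{Bounds(2,2,2,0)} together with Corollary~\ref{Cycles(2,2,2)}, since $\gamma_{(2,2,2,0)}(C_n) \le \gamma_{(2,2,2)}(C_n) = n$. The real task is the matching lower bound $\gamma_{(2,2,2,0)}(C_n) \ge n$, and my plan is to decompose the cycle at $V_3$ and add up a local pairwise inequality on each resulting arc.

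Take any $\gamma_{(2,2,2,0)}(C_n)$-function $f(V_0,V_1,V_2,V_3)$. If $V_3=\varnothing$, then $f$ is already a $(2,2,2)$-dominating function on $C_n$, so Corollary~\ref{Cycles(2,2,2)} yields $\omega(f)\ge n$ immediately. Otherwise, put $m=|V_3|\ge 1$, label $V_3$ in cyclic order as $x_1,\ldots,x_m$ (with $x_{m+1}$ understood as $x_1$), and let $w_{j,1},\ldots,w_{j,k_j}$ be the non-$V_3$ vertices lying strictly between $x_j$ and $x_{j+1}$ on $C_n$. These $m$ arcs partition $V(C_n)\setminus V_3$, so $\sum_{j=1}^{m} k_j = n-m$.

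The crucial observation is that every $V_3$-vertex contributes $3$ to the neighbourhood sum of each of its two cycle-neighbours, which automatically satisfies the domination conditions at $w_{j,1}$ and $w_{j,k_j}$. The only surviving constraints on the $j$-th arc are the interior ones $f(w_{j,l-1})+f(w_{j,l+1})\ge 2$ for $2\le l\le k_j-1$. Summing these interior inequalities and re-indexing yields
\[
2\sum_{l=1}^{k_j} f(w_{j,l}) \;-\; \bigl(f(w_{j,1})+f(w_{j,2})+f(w_{j,k_j-1})+f(w_{j,k_j})\bigr) \;\ge\; 2(k_j-2),
\]
whence $\sum_{l=1}^{k_j} f(w_{j,l}) \ge k_j-2$; this bound holds trivially whenever $k_j\le 2$. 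Adding the estimate over all $m$ arcs gives
\[
\omega(f) \;=\; 3m + \sum_{j=1}^{m}\sum_{l=1}^{k_j} f(w_{j,l}) \;\ge\; 3m + \sum_{j=1}^{m}(k_j-2) \;=\; 3m + (n-m) - 2m \;=\; n,
\]
which is the desired lower bound.

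The main obstacle is not the algebra but the initial conceptual step of decomposing via $V_3$: once every $V_3$-vertex is viewed as a delimiter whose weight $3$ absorbs both adjacent neighbourhood constraints, the per-arc bound reduces to a clean telescoping of pairwise inequalities and the global bound follows by summation. The configuration $3,0,0,3,0,0,\ldots$ on $C_{3k}$ saturates the argument, which accounts for its tightness.
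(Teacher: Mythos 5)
Your proof is correct, and the lower-bound argument takes a genuinely different route from the paper's. The paper first normalizes the extremal function: it picks a $\gamma_{(2,2,2,0)}(C_n)$-function with $|V_3|$ minimum and runs an exchange argument (lowering a weight-$3$ vertex to $2$ while raising a neighbour) to force $N(v)\subseteq V_0$ for every $v\in V_3$; only then does it count, writing $\omega(f)=\sum_{x\in V_3}f(N[x])+\sum_{x\notin N[V_3]}f(x)$ and using the degree-$2$ double-counting bound $2\sum_{x\in V(C_n)\setminus N[V_3]}f(x)\ge 2(n-3|V_3|)$. You dispense with both the extremal choice and the exchange lemma: starting from an \emph{arbitrary} $(2,2,2,0)$-dominating function, you cut the cycle at the $V_3$-vertices, note that the weight-$3$ delimiters absorb the constraints at the two endpoints of each arc, and telescope the interior constraints $f(w_{j,l-1})+f(w_{j,l+1})\ge 2$ into the per-arc bound $\sum_{l}f(w_{j,l})\ge k_j-2$, which holds vacuously for $k_j\le 2$, so adjacent or nearby $V_3$-vertices (the configurations the paper's normalization is designed to eliminate) cause no trouble; summing, $\omega(f)\ge 3m+(n-m)-2m=n$. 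I checked the telescoping identity (it is exact for $k_j=3,4$ as well as $k_j\ge 5$) and the degenerate cases $m=1$ (a single arc wrapping the cycle) and $k_j=0$: all are sound, and your handling of $V_3=\varnothing$ via Corollary \ref{Cycles(2,2,2)} matches the paper's use of Lemma \ref{lem-small-values-(2,2,2,0)}. What each buys: your argument is more self-contained and yields the marginally stronger statement that \emph{every} $(2,2,2,0)$-dominating function on $C_n$ has weight at least $n$, with the tight pattern $3,0,0,3,0,0,\ldots$ transparently saturating each arc inequality; the paper's normalize-then-count scheme, by contrast, never uses the linear structure of the cycle beyond its degrees, so it is the version that would port to bounded-degree graphs, whereas your arc decomposition is specific to cycles and paths.
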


\begin{proof}
By Corollaries \ref{Cycles(2,2,2)} and  \ref{Bounds(2,2,2,0)}  we have that $\gamma_{(2,2,2,0)}(C_n)\leq \gamma_{(2,2,2)}(C_n)= n$. We only need to prove that $\gamma_{(2,2,2,0)}(C_n)\geq n$. Let $f(V_0,V_1,V_2,V_3)$ be a $\gamma_{(2,2,2,0)}(G)$-function such that $|V_3|$ is minimum. If $V_3=\varnothing $, then by Lemma \ref{lem-small-values-(2,2,2,0)} and Corollary \ref{Cycles(2,2,2)} we conclude that $\gamma_{(2,2,2,0)}(C_n)= n$. 
Assume $V_3\ne \varnothing $.
If $v\in V_3$, then $N(v)\subseteq V_0$ as otherwise, by choosing  one vertex $u\in N(v)\setminus V_0$, the function $f'$ defined by $f'(v)=2$, $f'(u)=\min\{2,f(u)+1\}$ and $f'(x)=f(x)$ for the remaining vertices, is a $(2,2,2,0)$-dominating function with $\omega(f')\le \omega(f)$ and $|V_3'|<|V_3|$, which is a contradiction.  Hence, $\sum_{x\in V_3}f(N[x])=3|V_3|$. Now, we observe that
$$2\sum_{x\in V(C_n)\setminus N[V_3]}f(x)\ge\displaystyle\sum_{x\in V(C_n)\setminus N[V_3]}\left( \sum_{u\in N(x)}f(u)\right)\geq 2(n-3|V_3|).$$

Therefore,
$$\gamma_{(2,2,2,0)}(C_n)=\omega(f)=\sum_{x\in V_3}f(N[x])+ \sum_{x\in V(C_n)\setminus N[V_3]}f(x)\geq 3|V_3|+(n-3|V_3|)=n,$$ and the result follows.
\end{proof}

\begin{proposition}
For any integer $n\geq 3$,
$$\gamma_{(2,2,2,0)}(P_n)=\left\{ \begin{array}{ll}
6 & if \, n=5,\\[5pt]
n &  \text{ otherwise}.
\end{array}\right. $$
\end{proposition}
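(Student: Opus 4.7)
My plan is to match the lower bound to the cycle case via Proposition \ref{obs-subgraph-general} and supply explicit constructions for the upper bound, handling $n=5$ separately. For the lower bound, observe that $P_n$ is a spanning subgraph of $C_n$ and $\delta(P_n)\ge 1\ge 0 = w_l/l$, so Proposition \ref{obs-subgraph-general} applies and, combined with Proposition \ref{prop-Cn-2220}, yields $\gamma_{(2,2,2,0)}(P_n)\ge \gamma_{(2,2,2,0)}(C_n)=n$ for every $n\ge 3$.

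For the upper bound when $n\ne 5$, I would introduce two building blocks on consecutive vertices: $B_3=(0,3,0)$ of weight $3$ on three vertices, and $B_4=(0,2,2,0)$ of weight $4$ on four vertices. Since $w_3=0$ imposes no constraint on the central weight-$3$ vertex of $B_3$, both blocks are by themselves valid $(2,2,2,0)$-dominating labellings of $P_3$ and $P_4$. By an elementary Frobenius-type observation, every integer $n\ge 3$ with $n\ne 5$ admits a decomposition $n=3a+4b$ with $a,b\in \mathbb{N}$, so concatenating $a$ copies of $B_3$ followed by $b$ copies of $B_4$ along $P_n=u_1u_2\ldots u_n$ produces a function $f$ of weight $n$. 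The verification that $f$ is $(2,2,2,0)$-dominating can be carried out block by block: interior vertices of each block satisfy their constraints inside the block, the global leaves $u_1$ and $u_n$ have a neighbour of value at least $2$ in the first and last block respectively, and at each interface between two consecutive blocks both adjacent vertices carry value $0$ but each still has a neighbour of value $\ge 2$ lying inside its own block.

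The case $n=5$ is handled on its own. For the upper bound it suffices to exhibit the function $f(u_1)=f(u_3)=f(u_5)=0$, $f(u_2)=f(u_4)=3$, which has weight $6$ and is easily seen to be $(2,2,2,0)$-dominating. For the matching lower bound $\gamma_{(2,2,2,0)}(P_5)\ge 6$, I would argue by contradiction: if some $f$ had weight $5$ and $V_3=\varnothing$, then $f$ would be a $(2,2,2)$-dominating function of weight $5$, contradicting $\gamma_{(2,2,2)}(P_5)=2\gamma_t(P_5)=6$ from Propositions \ref{teo-Pn} and \ref{TotalDominationCyclesPaths}; if instead $V_3\ne \varnothing$, picking $u_i\in V_3$ leaves total weight at most $2$ on the other four vertices, and a short case analysis on $i\in\{1,2,3,4,5\}$ shows that the leaf constraints at $u_1$ and $u_5$ together with the $(2,2,2,0)$-domination condition at the weight-$2$ vertex supporting each leaf cannot be simultaneously satisfied. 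The only delicate step is this finite case analysis; the cycle reduction and the block construction are standard and transparent.
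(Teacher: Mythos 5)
Your proof is correct, and it shares the paper's skeleton only for the lower bound: both arguments obtain $\gamma_{(2,2,2,0)}(P_n)\ge n$ by viewing $P_n$ as a spanning subgraph of $C_n$ and invoking Proposition~\ref{obs-subgraph-general} together with $\gamma_{(2,2,2,0)}(C_n)=n$ (Proposition~\ref{prop-Cn-2220}). Your upper bound, however, uses a genuinely different decomposition. The paper splits on $n \bmod 3$, placing $3$'s at the vertices $v_{3i-1}$ and patching the tail with two or four $2$'s; since those patterns only apply for $n\ge 9$, it must verify $n=3,4,6,7,8$ (and $n=5$) by hand, and indeed it merely asserts these small cases are ``easy to check.'' Your concatenation of the self-contained blocks $B_3=(0,3,0)$ and $B_4=(0,2,2,0)$ along a decomposition $n=3a+4b$, which exists exactly for $n\ge 3$ with $n\ne 5$, handles all cases uniformly: interior vertices are satisfied within their block, boundary $0$'s keep a weight-$\ge 2$ neighbour inside their own block regardless of the adjacent block, and $w_3=0$ frees the centre of $B_3$; I verified both block types and the interfaces, and the construction is sound. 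Your treatment of $n=5$ is also more complete than the paper's: the case $V_3=\varnothing$ correctly reduces to a $(2,2,2)$-dominating function of weight $5$, contradicting $\gamma_{(2,2,2)}(P_5)=2\gamma_t(P_5)=6$ from Propositions~\ref{teo-Pn} and~\ref{TotalDominationCyclesPaths}, and in the case $V_3\ne\varnothing$ all five positions of the weight-$3$ vertex do fail as you claim (for $u_i$ with $i\in\{1,2,4,5\}$ the forced weight-$2$ support of the far leaf has $f(N(v))=0$; for $i=3$ the two leaf constraints demand total weight $\ge 4$ on $\{u_2,u_4\}$ against a residue of $2$). One small point of hygiene: your $n=5$ contradiction only excludes weight exactly $5$, so the conclusion $\gamma_{(2,2,2,0)}(P_5)\ge 6$ silently relies on the cycle bound $\gamma_{(2,2,2,0)}(P_5)\ge 5$ to rule out smaller weights; you do establish that bound earlier for all $n\ge 3$, but it is worth citing explicitly when assembling the case $n=5$.
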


\begin{proof}
It is easy to check that  $\gamma_{(2,2,2,0)}(P_n)=n$ for  $n=3,4,6,7,8$, and also $\gamma_{(2,2,2,0)}(P_5)=6$. From now on,  assume $n\ge 9$. By Propositions \ref{obs-subgraph-general} and \ref{prop-Cn-2220} we have that $n=\gamma_{(2,2,2,0)}(C_n)\leq \gamma_{(2,2,2,0)}(P_n)$. Hence, we only need to prove that $\gamma_{(2,2,2,0)}(P_n)\leq n$. To this end,  we proceed to construct a $(2,2,2,0)$-dominating function $f(V_0,V_1,V_2,V_3)$ on $P_n=v_1v_2\ldots v_n$ such that 
 $\omega(f)=n$. 
\begin{itemize}
\item If $n\equiv 0\pmod 3$, then we set $V_3=\displaystyle\bigcup_{i=1}^{n/3}\{v_{3i-1}\}$ and $V_0=V(G)\setminus V_3$. 
\item If $n\equiv 1\pmod 3$, then we set $V_3=\displaystyle\bigcup_{i=1}^{(n-4)/3}\{v_{3i-1}\}$, $V_2=\{v_{n-2},v_{n-1}\}$ and $V_0=V(G)\setminus (V_2\cup V_3)$. 
\item If $n\equiv 2\pmod 3$, then we set $V_3=\displaystyle\bigcup_{i=1}^{(n-8)/3}\{v_{3i-1}\}$, $V_2=\{v_{n-6},v_{n-5}, v_{n-2},v_{n-1}\}$ and $V_1=\varnothing$. 
\end{itemize}
Notice that in the three cases above, $f$ is a $(2,2,2,0)$-dominating function of weight $\omega(f)=n$, as required. Therefore, the proof is complete.
\end{proof}

\section*{Acknowledgements}
The authors would thank the anonymous reviewers for their generous time in providing detailed comments and suggestions that helped us to improve the paper.


\begin{thebibliography}{10}
\expandafter\ifx\csname urlstyle\endcsname\relax
  \providecommand{\doi}[1]{doi:\discretionary{}{}{}#1}\else
  \providecommand{\doi}{doi:\discretionary{}{}{}\begingroup
  \urlstyle{rm}\Url}\fi

\bibitem{CopionesTotalItalian}
H.~Abdollahzadeh~Ahangar, M.~Chellali, S.~M. Sheikholeslami and J.~C.
  Valenzuela-Tripodoro, Total {R}oman $\{2\}$-dominating functions in graphs,
  \emph{Discuss. Math. Graph Theory. In press}  (2020),
  \url{https://doi.org/10.7151/dmgt.2316}.

\bibitem{MR1415278}
D.~W. Bange, A.~E. Barkauskas, L.~H. Host and P.~J. Slater, Generalized
  domination and efficient domination in graphs, \emph{Discrete Math.}
  \textbf{159} (1996), 1--11. 

\bibitem{k-domination-2006}
B.~Bre\v{s}ar, M.~A. Henning and S.~Klav\v{z}ar, On integer domination in
  graphs and vizing-like problems, \emph{Taiwanese J. Math.} \textbf{10 (5)}
  (2006), 1317--1328.

\bibitem{TR2DF-2020}
S.~Cabrera~Garc{\'i}a, A.~Cabrera~Mart{\'i}nez, F.~A. Hern{\'a}ndez~Mira and
  I.~G.~Yero, Total {R}oman $\{2\}$-domination in graphs, \emph{Quaest. Math.
  In press}  (2019), 
  \url{https://doi.org/10.2989/16073606.2019.1695230}.

\bibitem{DD-lexicographic}
A.~Cabrera~Mart\'{\i}nez, S.~Cabrera~Garc\'{\i}a and J.~A.
  Rodr\'{\i}guez-Vel\'{a}zquez, Double domination in lexicographic product
  graphs, \emph{Discrete Appl. Math.} \textbf{284} (2020), 290--300.

\bibitem{CHELLALI201622}
M.~Chellali, T.~W. Haynes, S.~T. Hedetniemi and A.~A. McRae, {R}oman
  $\{2\}$-domination, \emph{Discrete Appl. Math.} \textbf{204} (2016), 22--28.

\bibitem{k-domination-first}
G.~S. Domke, S.~T. Hedetniemi, R.~C. Laskar and G.~H. Fricke, Relationships
  between integer and fractional parameters of graphs, in: Y.~Alavi,
  G.~Chartrand, O.~R. Oellermann and A.~J. Schwenk (eds.), \emph{Graph theory,
  combinatorics, and applications: proceedings of the Sixth Quadrennial
  International Conference on the Theory and Applications of Graphs, Western
  Michigan University}, John Wiley \& Sons Inc., volume~2, 1991 pp. 371--387.

\bibitem{MR812671}
J.~F. Fink and M.~S. Jacobson, {$n$}-domination in graphs, in: \emph{Graph
  theory with applications to algorithms and computer science}, Wiley, New
  York, Wiley-Intersci. Publ., pp. 283--300, 1985.

\bibitem{Harary2000}
F.~Harary and T.~W. Haynes, Double domination in graphs, \emph{Ars Combin.}
  \textbf{55} (2000), 201--213.

\bibitem{Haynes1998a}
T.~Haynes, S.~Hedetniemi and P.~Slater, \emph{Domination in Graphs: Volume 2:
  Advanced Topics}, Chapman \& Hall/CRC Pure and Applied Mathematics, Taylor \&
  Francis, 1998.

\bibitem{Haynes1998}
T.~W. Haynes, S.~T. Hedetniemi and P.~J. Slater, \emph{Fundamentals of
  Domination in Graphs}, Chapman and Hall/CRC Pure and Applied Mathematics
  Series, Marcel Dekker, Inc. New York, 1998.

\bibitem{Henning2010}
M.~A. Henning and A.~P. Kazemi, $k$-tuple total domination in graphs,
  \emph{Discrete Appl. Math.} \textbf{158} (2010), 1006--1011,
  \doi{http://dx.doi.org/10.1016/j.dam.2010.01.009}.

\bibitem{HENNING2017557}
M.~A. Henning and W.~F. Klostermeyer, Italian domination in trees,
  \emph{Discrete Appl. Math.} \textbf{217} (2017), 557 -- 564,

\bibitem{Henning2013}
M.~A. Henning and A.~Yeo, \emph{Total Domination in Graphs}, Springer
  Monographs in Mathematics, Springer New York, 2013.

\bibitem{k-domination-2009}
X.~Hou and Y.~Lu, On the \{k\}-domination number of cartesian products of
  graphs, \emph{Discrete Math.} \textbf{309 (10)} (2009), 3413--3419.

\bibitem{Klostermeyer201920}
W.~F. Klostermeyer and G.~MacGillivray, {R}oman, {I}talian, and 2-{d}omination,
  \emph{J. Combin. Math. Combin. Comput.} \textbf{108} (2019), 125--146.

\bibitem{k-domination-2008}
C.-M. Lee and M.-S. Chang, Variations of {Y}-dominating functions on graphs,
  \emph{Discrete Math.} \textbf{308} (2008), 4185--4204.

\end{thebibliography}
                  \end{document}